\definecolor{hanblue}{rgb}{0.27, 0.42, 0.81}
\definecolor{red}{rgb}{1.0, 0.0, 0.0}
\newtheorem{theorem}{Theorem}
\newtheorem{corollary}[theorem]{Corollary}
\newtheorem{definition}[theorem]{Definition}
\newtheorem{lemma}[theorem]{Lemma}
\newtheorem{proposition}[theorem]{Proposition}
\newtheorem{remark}[theorem]{Remark}
\numberwithin{equation}{section}
\numberwithin{theorem}{section}
\renewcommand{\tilde}{\widetilde}
\renewcommand{\bar}{\overline}
\renewcommand{\hat}[1]{\widehat{#1}}
\newcommand\eps\varepsilon
\renewcommand\det{\operatorname{det}}
\newcommand\bff{\operatorname{bf}}
\newcommand\ff{\operatorname{ff}}
\newcommand{\lf}{\operatorname{lf}}
\newcommand{\rf}{\operatorname{rf}}
\newcommand\paperintro%
\newcommand\paperbody%
\newcommand\bbM{\mathbb{M}}
\newcommand\cC{\mathcal{C}}
\newcommand{\res}{\mathop{\hbox{\vrule height 7pt width .5pt depth 0pt
\vrule height .5pt width 6pt depth 0pt}}\nolimits}
\DeclareMathAlphabet{\mathpzc}{OT1}{pzc}{m}{it}
\newcommand{\NN}{\mathbb N}
\newcommand{\RR}{\mathbb R}
\newcommand{\del}{\partial}
\newcommand{\calC}{{\mathcal C}}
\newcommand{\calE}{{\mathcal E}}
\newcommand{\calF}{{\mathcal F}}
\newcommand{\calL}{{\mathcal L}}
\newcommand{\calM}{{\mathcal M}}
\newcommand{\calN}{{\mathcal N}}
\newcommand{\calO}{{\mathcal O}}
\newcommand{\calT}{{\mathcal T}}
\newcommand{\calV}{{\mathcal V}}
\newcommand{\calX}{{\mathcal X}}
\newcommand{\calY}{{\mathcal Y}}
\newcommand{\calZ}{{\mathcal Z}}
\newcommand{\ssim}{{\mathrm{ss}}}
\newcommand{\inte}{{\mathrm{int}}}
\newcommand{\ori}{{\mathrm{or}}}
\newcommand{\new}{{\mathrm{new}}}
\newcommand{\ext}{{\mathrm{ext}}}
\def\bpf{\begin{proof}}
\def\epf{\end{proof}}
\def\beq{\begin{equation}}
\def\eeq{\end{equation}}
\definecolor{airforceblue}{rgb}{0.36, 0.54, 0.66}
\date{}
\begin{document}

\title{Short-time existence for the network flow}
\author{Jorge Lira \\ Federal University of Ceará \and Rafe Mazzeo \\ Stanford University \and   
Alessandra Pluda \\ University of Pisa \and Mariel Saez \\ Pontificia Universidad Cat\'olica de Chile} 

\maketitle

\begin{abstract}
This paper contains a new proof of the short-time existence for the flow by curvature of a network of curves in the plane. 
Appearing initially in metallurgy and as a model for the evolution of grain boundaries, this flow was later treated by Brakke \cite{Br}
using varifold methods.  There is good reason to treat this problem by a direct PDE approach, but doing so
requires one to deal with the singular nature of the PDE at the vertices of the network. This was handled in cases of 
increasing generality by Bronsard-Reitich \cite{BrRe}, Mantegazza-Novaga-Tortorelli \cite{MNT} and eventually, 
in the most general case of irregular networks by Ilmanen-Neves-Schulze \cite{INS}.  Although the present paper proves 
a result similar to the one in \cite{INS}, the method here provides substantially more detailed information about
how an irregular network `resolves' into a regular one. Either approach relies on the existence of self-similar expanding 
solutions found in \cite{MS}.  As a precursor to and illustration of the main theorem, we also prove an unexpected regularity result for the 
mixed Cauchy-Dirichlet boundary problem for the linear heat equation on a manifold with boundary.
\end{abstract}

\section{Introduction}
This paper contains a new proof of short-time existence for the curve-shortening flow for networks of curves 
in the plane, or more generally in any Riemannian surface.  This problem has been treated before, and the 
history is recalled below, but our approach draws out some important features and precise information
not accessible by previous methods.  The inspiration for this method draws on the methods of geometric
microlocal analysis, and in particular the application of those methods to the study of geometric
flows on spaces with conic singularities, cf.\ \cite{MRS}. Beyond the geometric naturality of this proof, it 
should be well adapted for proving analogous results in higher dimensions, e.g.\ for the flow by mean 
curvature of `foams' of surfaces in $\RR^3$. 

The \emph{Mean Curvature Flow} (MCF) is surely one of the best studied geometric evolution problems; it requires
a submanifold to evolve with normal velocity equal to its mean curvature. It is the gradient flow of the length/area/volume 
functional, and hence, the length of a curve evolving by this flow decreases in time; this explains the name curve-shortening flow. 
The normal velocity at any $(p,t)$ is the curvature of the curve at that point of space and time: 
\begin{equation*}
v^\perp=\vec{\kappa}\,.
\end{equation*}
The complete analysis of the curve-shortening flow for embedded closed curves, due to Gage-Hamilton \cite{GH} and
Grayson \cite{Gr}, was the one of earliest successes in the theory of geometric flows.   However, some years 
before, motivated in part by the desire to understand dynamics of grain boundaries, Brakke \cite{Br} defined a weak
form of this curvature flow in the class of varifolds, which thus provides one avenue for studying the evolution of 
certain singular geometric objects.  This weak formulation yields global existence of solutions, but on the other hand, does 
not allow one to control the geometry of these evolving objects.
This has motivated much further study.   We mention the work of Kim and Tonegawa~\cite{KT}, who showed 
that one can obtain a nontrivial mean curvature flow even when the initial datum is singular, cf.\ also the recent
monograph~\cite{To}.  However, it is worth emphasizing that questions concerning uniqueness or non-uniqueness 
of solutions are mostly inaccessible in this measure theoretic context.

It is of interest to recast Brakke's results if one is interested in objects which are nearly smooth;
in one dimension, this leads to the study of networks of curves.  By definition, a network of curves $\Gamma$ 
in some domain $\Omega \subset \RR^2$ is a collection of smooth embedded curves $\{\gamma^{(j)}(x)\}$ with the following
properties. First, the interiors of these curves are disjoint. Each end of any $\gamma^{(j)}$ is either an `external' or `internal'
vertex.  The external vertices are disjoint and lie on $\del \Omega$ and are regarded as termini of the network. We impose 
some boundary conditions at these, e.g. they might be fixed or evolve remaining normal to the boundary, etc. 
At each interior vertex, however, three or more of the $\gamma^{(j)}$ meet non-tangentially.  We always
assume that there is at least one interior vertex.  For simplicity we always assume that $\Omega \subset \RR^2$
and Dirichlet (fixed) boundary conditions are imposed at the internal vertices. However, everything we say here
can be easily generalized if $\Omega$ is either a closed Riemannian surface, or a surface with boundary, or if other 
boundary conditions are imposed, or even more generally, if $\Gamma$ lies in $\RR^2$,  the network consists of only finitely
many arcs, and the `external' arcs are asymptotic to  rays at infinity (see \S~\ref{networks} below for a bit more on this). 

As indicated above, we wish to study this problem in a strong sense, using the methods of PDE rather than geometric measure theory.
Thus with respect to an arbitrary parametrization of each $\gamma^{(j)}$, consider the equations
\begin{equation*}
\langle \del_t \gamma^{(j)} , \nu^{(j)}\rangle = \kappa^{(j)}\, ;
\end{equation*}
here $\nu^{(j)}$ is the unit normal vector to $\gamma^{(j)}$ and the curvature vector is $\vec{\kappa}^{(j)}=\kappa^{(j)}\nu^{(j)}$.

Consider first the situation where the initial configuration $\Gamma_0$ is {\it regular}, i.e., each interior vertex is 
trivalent, with the three curves meeting at each such vertex in equal angles of $2\pi/3$ (this is
called the {\it Herring condition}).   Regular vertices are one feature of stable critical points of the length functional in the 
class of networks.  If the curves meeting at an interior vertex are labelled $\gamma^{(i_\ell)}$, $\ell=1,2,3$, then
for every $t > 0$, 
\begin{equation*}
\begin{aligned}
& \gamma^{(i_1)}(t,0) = \gamma^{(i_2)}(t,0) = \gamma^{(i_3)}(t,0)\,,  \quad \mbox{and}\\
& \tau^{(i_1)}(t,0) +\tau^{(i_2)}(t,0) + \tau^{(i_3)}(t,0)  = 0,
\end{aligned}
\end{equation*}
where $\tau^{(j)}$ is the unit tangent vector to $\gamma^{(j)}(t,0)$. 

Short-time existence of this flow for regular networks was obtained in the papers of Kinderlehrer and Liu~\cite{KL}, 
Bronsard and Reitich~\cite{BrRe}, and Mantegazza, Novaga and Tortorelli~\cite{MNT}.  In this last paper, 
as well as in the more recent paper by Magni, Mantegazza and Novaga~\cite{MMN},  certain long-time existence results
are also proved. We refer also to the survey by Mantegazza et al.~\cite{Man}. 

Our main interest here is the situation where $\Gamma_0$ is not regular. In other words, we are interested in the
short-time existence of this flow when the initial network has interior vertices which are either trivalent but
for which the Herring conditions are not satisfied, or else $k$-valent for some $k > 3$.

There are several motivations for studying this problem. The most basic one is the inherent interest in enlarging the 
class of admissible initial data. However, another is that the class of regular networks is not preserved by the flow. 
An initial regular network can evolve into one which at some later time is not regular. For example, two triple junctions 
might coalesce, or an enclosed region bounded by a loop of curves in the network might collapse to a point.
The best scenario is when the curvature of the constituent curves remains bounded in this singularity formation
(as happens when two triple points collide).  At the very least, we would like to know whether this limiting
network can evolve past this singular time. The short time existence result for irregular networks `restarts'
the evolution.

Short time existence for irregular networks was established in the paper of Ilmanen, Neves and Schulze~\cite{INS}.
They prove that if $\Gamma_0=\{\gamma_0^{(j)}\}$ is an irregular initial network, then there exists a time $T>0$ 
and a solution of the flow $\Gamma(t)=\{\gamma_t^{(j)}\}_{t\in [0,T)}$ such that  
$\Gamma(t)$ converges to $\Gamma_0$ in the sense of varifolds. (As we explain later, $\Gamma(t)$
may have a larger number of arcs than $\Gamma_0$ itself.)  Although this convergence is already slightly 
stronger than requiring that $\mathcal{H}^1\res \Gamma_t \to \mathcal{H}^1\res \Gamma_0$ as $t \to 0$ 
in the sense of measures, it is desirable to have an even more detailed understanding of the geometry 
of this `singularity resolution'.

The key point in all of this has relatively little to do with the regularity of
the initial curves, but is more closely related to their geometry. We must, in particular, show
precisely how a single multi-point gives birth to a cluster of triple junctions.   This already
appears in \cite{INS}, but our approach provides a more detailed understanding. 
Our main result is the following
 \begin{theorem}\label{main}
Let $\Gamma_0$ be an initial network where all component arcs are of class $\calC^2$ and
at least one interior vertex is irregular (even in the mildest sense that the curves which meet 
at the vertex do not satisfy all infinitely many of the matching conditions there). 
Then there exists a time $T>0$ and an evolving family of regular networks $\Gamma(t)$, $0 < t < T$,
with the property that $\Gamma(t) \to \Gamma_0$ in a strong sense to be described below.

The set of possible flowouts is classified by the collection of all (appropriate) expanding soliton 
solutions of the flow at each interior vertex. 
\end{theorem}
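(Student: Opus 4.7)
The plan is to construct a true solution by first building an approximate flow $\Gamma_{\mathrm{app}}(t)$ in which each irregular vertex of $\Gamma_0$ is replaced, for $t>0$, by a parabolically rescaled self-similar expander supplied by \cite{MS}, and then perturbing via a contraction mapping in weighted parabolic spaces adapted to the parabolic blow-up of spacetime at the irregular vertices at $t=0$. This is the same philosophy as the conic geometric flow work \cite{MRS}: the singular initial data forces one to solve the PDE not on a cylinder but on a resolved parabolic manifold with corners, where the relevant heat-type operators become tractable.

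\textbf{Step 1: Approximate solution.} At each irregular interior vertex $p$ of $\Gamma_0$, the curves have well-defined tangent half-lines meeting at $p$. Choose an appropriate expanding soliton $\Sigma_p$ from the classification of \cite{MS} whose asymptotic rays match these tangents; $\Sigma_p(t):=\sqrt{t}\,\Sigma_p$ is a regular network that flows by curvature. Away from the irregular vertices, extend the regular arcs of $\Gamma_0$ by the standard short-time flow (or any sufficiently smooth ansatz). Glue the two pieces in an annular region $\sqrt{t}\ll |x-p|\ll 1$ using a cutoff in the parabolic similarity variable $\xi=(x-p)/\sqrt{t}$. This produces $\Gamma_{\mathrm{app}}(t)$, regular for $t>0$, matching $\Gamma_0$ as $t\downarrow 0$ in the desired strong sense, and for which the defect from being an exact curvature flow decays like a positive power of $t$ in the appropriate norm.

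\textbf{Step 2: Linear theory on the parabolic blow-up.} Write the sought solution as a normal graph $\Gamma(t)=\{\exp_{\gamma^{(j)}_{\mathrm{app}}}(u^{(j)}\nu^{(j)})\}$ over $\Gamma_{\mathrm{app}}(t)$. The linearized equation is a system of one-dimensional heat operators $\partial_t-\partial_s^2+\text{l.o.t.}$ along each arc, coupled through concurrence and Herring-type conditions at each triple junction. Near $(p,0)$ the whole system degenerates parabolically: after parabolically blowing up spacetime at $(p,0)$, a new face $\{\rho=0\}$ appears on which $\xi=(x-p)/\sqrt{t}$ is the natural coordinate and the reduced operator is exactly the linearization of the curvature flow about the expander $\Sigma_p$. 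Using the microlocal/heat-calculus framework of \cite{MRS}, one proves that the linearized problem is Fredholm between weighted parabolic Hölder (or $b$-Sobolev) spaces of the form $t^{\alpha}\mathcal{C}^{2,\alpha}_{\mathrm{phg}}$ on the resolved space, with mapping properties governed by the indicial roots of the expander linearization and the matching conditions.

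\textbf{Step 3: Nonlinear perturbation and classification.} Given invertibility on the chosen weighted space modulo a finite-dimensional obstruction (absorbed by the freedom of the expander parameters), the fixed point equation $u=L^{-1}(N(u)+E)$, where $E$ is the error of $\Gamma_{\mathrm{app}}$ and $N$ is the quadratic remainder, is solved by the Banach contraction principle on a small ball, for $t\in[0,T)$ with $T>0$. The discrete freedom in choosing the expander $\Sigma_p$ at each irregular vertex yields the claimed classification of flowouts by the set of admissible expanders.

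\textbf{The principal obstacle} is the linear step: establishing uniform invertibility of the linearized, coupled, vertex-constrained heat operator on the parabolically blown-up spacetime. The indicial operator at the new front face is the linearization of the network flow around the regular expander $\Sigma_p$, and one must verify that no indicial root lies on the critical weight line, which is exactly the statement that $\Sigma_p$ is \emph{strictly stable} (or at worst has a kernel that can be compensated by modulating the gluing parameters). This spectral input, together with propagating polyhomogeneity from the initial face along the flow so that the resolved solution is truly a regular network for $t>0$, is where all the analytic delicacy of the argument is concentrated.
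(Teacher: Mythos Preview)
Your overall architecture—approximate solution modeled on expanders, linear theory on a parabolically blown-up spacetime, contraction mapping—is correct and is the same philosophy as the paper. However, the paper executes the first two steps quite differently, and in a way that sidesteps precisely the obstacle you identify as principal.

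Rather than a simple cut-and-glue whose error decays like a fixed positive power of $t$, the paper constructs an approximate solution whose error vanishes to \emph{infinite} order at $t=0$. This is done by writing the lifted flow equation in projective coordinates $(\tau,s)=(\sqrt{2t},x/\sqrt{2t})$ on the blown-up domain $Q_h$, positing a formal expansion $\eta \sim \sum_j \tau^j \eta_j(s)$ along the front face, and solving the resulting hierarchy of linear ODEs $\calL_j \eta_j = R_{j-1}$ for the coefficients (the $j=0$ equation is exactly the expander equation, so the chosen soliton appears as the leading term). Matching this series to the Taylor expansion of $\Gamma_0$ at the bottom face and to the Herring data along the side faces, and then invoking Borel's lemma, yields $\widehat{\Gamma}$ with $\bbM(\widehat{\Gamma})=\calO(t^\infty)$.

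For the linear step, the paper does \emph{not} use the heat calculus of \cite{MRS} or any indicial/spectral analysis of the linearization about the expander. Instead it proves, by elementary energy estimates (integration by parts in three model regions near $\ff\cap\bff$, near $\bff$, and near $\ff\cap\lf$), that the conjugated operator $t\partial_t + N - t\calL$ is an isomorphism between $L^2$-based spaces for all $N\gg 1$. The correction term is then found in $t^N\calX$ for large $N$, which is possible only because the error is $\calO(t^\infty)$. This completely bypasses the question of whether the expander linearization has indicial roots on a critical line or whether $\Sigma_p$ is strictly stable—no such input is required. Your route, with finite-order error and weighted H\"older mapping properties governed by indicial roots, is in principle viable (and is indeed what the authors say they originally envisaged), but it requires exactly the spectral verification you flag; the paper's method renders that unnecessary. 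The trade-off is a more laborious formal-series construction in exchange for a much softer linear theory and a clean uniqueness statement for each choice of expander.
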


The method developed here was initially inspirred by the paper of the second author, Rubinstein and 
Sesum~\cite{MRS}, which studies the Ricci flow on surfaces with isolated conical singularities. The  main
results there are a set of sharp estimates for linear heat equations on spaces with conic singularities,
from which the short-time existence of the flow is readily proved. (That paper also considers a number
of long-time and limiting results.)   At first we had planned to set things up to be able to invoke
those estimates, but in fact we ultimately decided that it is more transparent to approach the present problem
by an explicit construction of a very accurate approximate solution to the problem,
followed by some more standard-looking analytic estimates which are used to solve away the (rapidly
decaying) error term. 

To be more precise, suppose that $p$ is an irregular vertex of our initial network $\Gamma_0$ which is
$k$-valent for some $k > 3$ (the construction near vertices of valence $3$ is slightly simpler). Assume
for simplicity $p$ is placed at the origin.  Enumerate the curves meeting at $p$ by $\gamma^{(1)}, \ldots, \gamma^{(k)}$, 
sequentially going counterclockwise.  Denote by $\tau_j$ the unit tangent vector to $\gamma_j$ at $p$, and let
$\ell_j = \RR^+ \tau_j$ be the ray in $\RR^2$ in the direction $\tau_j$.  The incoming edges at $p$ 
thus determine a fan of rays in $\RR^2$ emanating from the origin.  As proved in \cite{MS}, there exists at
least one (and in fact, multiple when $k > 3$) expanding soliton solutions for the curve-shortening flow 
which have this fan as initial condition. (At least for generic fans, but conjecturally for all fans, there are only finitely many 
such solitons.) Choose one of these solutions, and denote it by $S_p$.  Having made this choice for every irregular 
interior vertex,  we prove that there exists a unique solution $\Gamma(t)$ of the network flow, defined on some interval 
$0 < t < T$, whose combinatorics are the same as if we were to replace a small ball around each irregular vertex $p$ 
with the corresponding choice of soliton $S_p$.   This solution converges as $t \to 0$ to $\Gamma_0$, but
this  convergence happens in a slightly odd fashion since $\Gamma(t)$ may have more  curves than $\Gamma_0$,
so certain curves in $\Gamma(t)$ converge to curves in $\Gamma_0$  (as smoothly as the regularity of those curves 
in $\Gamma_0$ allow), while the remaining curves in $\Gamma(t)$ shrink in size and `disappear into' these
irregular vertices as $t \searrow 0$.  There is a sense in which the entire convergence can be regarded
as $\calC^\infty$  if the initial network $\Gamma_0$ is smooth.  
At each irregular vertex $p$, the  soliton $S_p$ is the tangent flow at $(p,0)$.  This replacement  of
a neighborhood of an irregular vertex $p$ with a soliton $S_p$ appears also in \cite{INS}, but their
analysis only describes the convergence as $t \searrow 0$ in a  much weaker sense. 

The key to this construction (as well as the one in  \cite{INS}) is the existence of expanding solitons.  For 
trivalent vertices, their  existence and uniqueness was proved by Schn\"urer and Schulze~\cite{SS}. For
more general vertices, their existence and classification was obtained by the second and fourth authors 
here~\cite{MS}. One striking feature and outcome of this is that if any interior vertex has valence greater
than $3$, then the flow is ill-posed: there exists more than one solution emanating from the given 
initial configuration.  The chosen soliton at each $p$ emerges in a very natural way, as we shall explain.
This illustrates yet again that nonuniqueness is endemic in the realm of flows on spaces with singular
initial data. 

After describing the general features of the problem in \S 2, we introduce in \S 3 our key tool: the blowups
of the domain and range spaces. These blowups (which are a central tool in geometric microlocal analysis)
provide sufficient `room' to explain the $t \searrow 0$ asymptotics.  The main work is the construction
of an approximate solution, i.e., a family of networks $\widehat{\Gamma}(t)$ which converges to $\Gamma_0$
and which satisfy the flow equations up to an error which vanishes to all orders at $t=0$.  This is
rather delicate, and as a warm-up to it, we carry out this construction for a similar but quite classical problem in \S 4,
namely the solution of the linear heat equation on an interval, where the prescribed initial and boundary conditions
do not satisfy the compatibility conditions which would allow the solution to be smooth down to $t=0$.
The construction of the approximate solution for the nonlinear problem appears in \S 5.  There is still
a need to solve away the rapidly vanishing error term. This could be done using the heat space theory as in \cite{MRS},
but we have chosen to do this using a classical approach based on a priori estimates.  Due to some
unusual features of the equation, this takes a bit of work, and is done in \S 6.  The short-time existence
is then proved quickly in \S 7, and \S 8 contains a number of remarks concerning extensions of this theorem. 

This plan for this paper, and the realization that a proof of this nature should be possible, was envisioned
over a decade ago, motivated by the then-recent work \cite{MS} but also by conversations 
between the second author and Felix Schulze during a sequence of quarter-long visits by Schulze to Stanford. 
Those conversations were perhaps precursors to \cite{INS}, but certainly also to the present paper. The details 
have been slow in coming, not for any particularly good reason, 
but in any event, gradually crystalized during a number of visits of varying lengths to Stanford by the first, third and 
fourth authors in the intervening years, and later, as the project came together, in other visits by the second author 
to Rome and Pisa. 

The first author has been supported by  CNPq grants 239573/2013-7 and 307410/2018-8. The second author has been supported by an NSF grant throughout, most recently by DMS-1608223.
The third author has been supported by 
the Deutsche Forschungsgemeinschaft (DFG) via the GRK 1692 ``Curvature, Cycles, and Cohomology'' and by INdAM-GNAMPA project 2019 ``Geometric problems for singular structures''.
The fourth author has been supported by a Fondecyt grant throughout, most recently by Fondecyt Regular 1190388. 

\section{The evolution equation for networks} 
\subsection{Networks}\label{networks}
We now develop language to state our precise results. 

\begin{definition}\label{network}
A planar \emph{network} $\Gamma$ is a finite union of embedded regular curves $\{\gamma^{(j)}\}_{j=1}^N$, 
each of class $\calC^2$, which meet in groups of at least $3$ at each interior vertex and which have some
fixed ends (or else $\Gamma$ is noncompact and each end is a curve asymptotic to a half-line at infinity).

We  also make the convention that we consider only networks where the curves meet nontangentially at
each interior vertex.
\end{definition}

Since the focus in this paper is the behavior near interior vertices, we assume that $\Gamma$ is compact
and lying in a smoothly bounded domain $\Omega \subset \RR^2$.   Were we to be studying long-term
behavior of the flow, it would be highly advantageous to assume that $\Omega$ is convex, but this
is irrelevant for our purposes. The methods here apply
equally well to noncompact networks, with noncompact edges asymptotic to straight lines, or also 
to networks lying in a Riemannian surface $(\Omega,g)$, with or without boundary. 

\smallskip

A single curve (closed or not) is a particularly trivial network without vertices.  Networks with only two curves
meeting in a vertex have been studied from the point of view of curves with cornerrs or cusps, cf.~\cite{angen3,angen1,angen2,gage,GH,Gr}.

\begin{definition}\label{reg-non-reg-networks}
A network $\Gamma$  is said to be \emph{regular} if the only vertices are \emph{triple} junctions where the unit tangent vectors 
of the curves meeting there form angles of $2\pi/3$  (this is \emph{the Herring condition}), and in addition the compatibility
conditions on higher derivatives are satisfied at each vertex, cf.\ the beginning of \S 3. (If the network is only $\calC^2$,
these compatibility conditions state only that the curvatures of the three curves are equal.)  Otherwise  the network is called irregular.
\end{definition}

The regular networks which are also trees have some particularly nice properties, as we shall mention later. 

\subsection{Evolution equation}\label{sectionevolutioneq}

Given a network composed of $m$ curves we define its \emph{length} $L$ as the sum of the length of each curve.
The curve-shortening flow is formally the $L^2$--gradient flow of the length. We aim to define the motion using 
classical PDE theory, so it is helpful to parametrize each curve of the network rather than to look at the network as a set.
To avoid confusion we denote by $\Gamma=\{\gamma^{(j)}\}_{j=1}^m$ a \emph{parametrized} network if each
$\gamma^{(j)}$ is a $\calC^2$ map from $[0,1]$ to $\Omega$ with derivative everywhere nonvanishing, and 
by $\mathcal{N}$ the corresponding set: $\mathcal{N}=\bigcup_{j=1}^m\gamma^{(j)}([0,1])$.
The notion of regular network is geometric: if a parametrized network $\Gamma$ is regular, 
than any smooth reparametrization of $\Gamma$ is still a regular network.

\medskip

Consider first a family of smooth parametrized curves $\gamma(t,x)$ where $x$ is an arbitrary monotone parametrization
(i.e., $\del_x \gamma \neq 0$). As already noted, the basic curve-shortening flow equation is 
\begin{equation}
\label{evolution}
\langle \del_t \gamma, \nu \rangle  = \kappa\,, \qquad \gamma(0, \cdot) = \gamma_0(\cdot)\,,
\end{equation}
where $\nu$ is the unit normal to $\gamma$ at any $(t,x)$, or equivalently, denoting the unit tangent vector to $\gamma$ by $\tau$,
\begin{equation}
\del_t \gamma = \kappa \nu + \zeta \tau\,,
\label{evolution2}
\end{equation}
for some function $\zeta$. Here $\kappa$ is the oriented curvature and equations~\eqref{evolution} and \eqref{evolution2} can
also be written in terms of the curvature vector as $(\del_t\gamma)^{\perp}=\vec{\kappa}$ where $(\partial_t\gamma)^{\perp}$ 
denotes the normal component of the velocity.

We next generalize  the definition of curve shortening flow to a family of time-dependent networks $\Gamma(t) = \{\gamma^{(j)}(t,x)\}$.
We ask the network to be regular for $t > 0$, at least for some short time interval: if $\gamma^{(i_\ell)}(0,{\color{green} x})$, $\ell=1,2,3$, intersect at
an interior vertex, then for every $t > 0$
\begin{equation}
\begin{aligned}
& \gamma^{(i_1)}(t,0) = \gamma^{(i_2)}(t,0) = \gamma^{(i_3)}(t,0)\,, \\
& \tau^{(i_1)}(t,0) +\tau^{(i_2)}(t,0) + \tau^{(i_3)}(t,0)  = 0\,,
\end{aligned}
\label{mbc}
\end{equation}
where $\tau^{(j)}$ is the unit tangent vector to $\gamma^{(j)}$. These matching boundary conditions~\eqref{mbc}
are called the Herring conditions. 

We now define the system of PDEs driving the motion by curvature of networks; it remains to discuss the freedom in
the choice of the tangential component $\zeta$ in~\eqref{evolution2}. At first sight, the choice of $\zeta$ in~\eqref{evolution2} 
does not seem important since it only corresponds to a reparametrization. However, observe that we cannot take $\zeta^{(j)} \equiv 0$.
Indeed, following~\cite[\S 2]{Man}, if the solution is sufficiently smooth in space and time, then the values of $\zeta^{(j)}(t,0)$
are determined entirely by the values of $\kappa^{(j)}(t,0)$ and vice versa.  Different choices of $\zeta^{(j)}$ taking these required values 
at the vertices then do correspond to reparametrizations. However, suppose $\zeta^{(j)}\equiv 0$, and let $P(t)$ denote the motion of any
interior vertex. Then for each $j$, $P'(t) = \del_t \gamma^{(j)}(t,0) = \kappa^{(j)}(t,0) \nu^{(j)}(t,0)$; since the $\nu^{(j)}$ are independent,
we deduce that $P'(t) \equiv 0$, i.e., the vertex remains fixed. However, imposing \eqref{mbc} and also requiring this vertex to be constant 
leads to an overdetermined system of PDEs.
It is also the case that, due to the `gauge invariance', i.e., the possibility of reparametrization, \eqref{evolution2} with $\zeta \equiv 0$ is not 
strictly parabolic, see~\cite{GH}. 

In any case, there are various natural choices for $\zeta$, but the one we use here is 
$\zeta = |\del_x \gamma|^{-3} \langle \del_x^2 \gamma, \del_x \gamma \rangle$
so that the flow equation becomes 
\begin{equation*}
\del_t \gamma = \frac{\del_x^2 \gamma}{|\del_x\gamma|^2}\,.
\end{equation*}

\begin{definition}[Parametrized network flow]\label{solutions}
Let $\Gamma_0=\{\gamma_0^{(j)}\}_{j=1}^n$ be a (possibly irregular) initial network consisting 
of $n$ regular curves $\gamma^{(j)}\in \calC^{2}([0,1],\overline{\Omega})$.  A one-parameter family
$\Gamma(t) = \{\gamma^{(j)}(t,\cdot) \}_{j=1}^m$, $0 < t < T$ and $m \geq n$, of regular networks is 
a solution to the \emph{network flow} with initial condition $\Gamma_0$ and with 
Dirichlet boundary conditions at $\del \Omega$ if the following is true:  for each $t \in (0,T)$, 
$\gamma^{(j)}(t,\cdot)$ is a smoothly parametrized curve in $\Omega$ with $\del_x \gamma^{(j)} \neq 0$ for
all $(t,x) \neq ((0,0)$; each network $\Gamma(t)$ is regular; each $\gamma^{(j)}$ evolves according to the equation
\begin{equation}\label{evolution3}
\del_t \gamma^{(j)}(t,x) =\frac{\del_x^2 \gamma^{(j)}(t,x)}{|\del_x \gamma^{(j)}|^{2}};
\end{equation}
at each triple junction where three curves meet, the matching conditions~\eqref{mbc} are satisfied,
while all exterior vertices, where any one of the $\gamma^{(j)}(t, 1) \in \del \Omega$, remain fixed as $t$ varies; 
finally, we require that  $\Gamma(t)$ converges uniformly in Hausdorff distance as a family of sets to $\Gamma_0$ 
as $t \searrow 0$.  
\end{definition}

At this stage the convergence of $\Gamma(t)$ to $\Gamma_0$ as $t \searrow 0$
can be taken as locally uniform
convergence of sets, but we shall give a more precise description of this convergence later. 

\begin{remark}
As checked in \S 6, the Herring boundary conditions satisfy the Lopatinski-Shapiro 
conditions for the linearization of the network flow. This, of course, was known much earlier,
cf.\ \cite{BrRe}, but  here is quite simple. This implies that each $\gamma^{(j)}$ is 
$\cC^\infty$ for $x \in [0,1]$ and $t > 0$. 
\end{remark}

\subsection{Irregular initial data} 
Now let us turn to the main question in this paper.   Suppose that $\Gamma_0$ is an irregular
network. It has a collection of exterior vertices $q_1, \ldots, q_r$ on $\del \Omega$ 
as well as some number of interior vertices $p_1, \ldots, p_s$.  Associated to each $p_i$
is a collection of indices $I_i \subset \{1, \ldots, n\}$ such that each of the curves $\gamma^{(j)}_0$,
$j \in I_i$, terminate at $p_i$ at either $x=0$ or $x=1$.  We set $k_i = |I_i|$ arcs; this is the valence of that vertex.
Similarly, each end of any $\gamma^{(j)}_0$ is either an interior or exterior vertex. We assume that
if an exterior vertex $q_i$ is one end of the curve $\gamma^{(j)}_0$, then $\gamma^{(j)}_0(1) = q_i$. 

For each internal vertex $p_i$, there is a set of rays emanating from $p_i$ determined by the tangents 
$\tau^{(j)} = \del_x \gamma^{(j)}_0$ of incoming arcs. This forms a  {\it fan} $\calF_i$.
In the next subsection we recall an old result by two of us \cite{MS} that shows that for each fan $\calF_i$ with at least $3$ 
rays there exists one (and, if $k_i > 3$, many) regular self-similar expanding solutions of the network flow  which 
converge to $\calF_i$ as $t \searrow 0$. Denote this set of expanding solitons by $\{\Gamma^{\ssim, (i)}_j\}_{j \in J}$. 
There is a unique such soliton for each combinatorial type of network. A priori there might be infinitely many, so we do not specify 
the enumeration precisely. Choosing one soliton for each $i$ we can 
define a regular network obtained by inserting a truncated version of $\Gamma^{\ssim, (i)}_j$ at each $p_i$. This determines a 
particular combinatorial type of regular network, and our eventual theorem shows that there exists an actual
solution of the network flow with this topology, at least for some short interval of time. For these fans, and also 
for the original network $\Gamma_0$,  the resolution appears instantaneously in the flow at time $0$; a number of 
new internal arcs are generated at each internal vertex.

For the truncation to a ball of large radius of each $\Gamma^{\ssim, (i)}_j$ there are $k_i$ external vertices. An elementary combinatorial 
argument proves that if this soliton network is a tree, then there are $k_i - 2$ interior vertices and $2k_i - 3$ edges, of which $k_i-3$ 
are interior edges.  The exterior edges are those which limit to the rays of the fan $\calF_i$. These enumerations show that, assuming
$\Gamma_0$ and each of the solitons are trees, then
\[
m = n + \sum_i (k_i - 3).
\]
The defect $m-n$ determines the number of new edges which appear instantaneously in the flowout.  In general, however,
if some of these networks are not trees, then $m-n$ may be much larger. 

In the following we denote by $\calV_0$ the set of all vertices; this set is the union $\calV_{0, \inte} \cup \calV_{0, \ext}$
of interior and exterior vertices.  The set of vertices of a flowout is denoted by $\calV$, and decomposes into
$\calV_{\inte, \ori} \cup \calV_{\inte, \new} \cup \calV_{\ext}$, where $\calV_{\ext}=\calV_{0, \ext}$ are the exterior vertices, 
$\calV_{\inte, \ori}$ is the set of vertices already present at $t=0$, and $\calV_{\inte, \new}$ is the set of new vertices that appear 
because of the  self-similar resolution. Similarly, the set $\calE_0$ of all edges of an initial network $\Gamma_0$
decomposes into $\calE_{0,\inte} \cup \calE_{0,\ext}$ of interior and exterior edges; for the set $\calE$ of edges
of a flowout, the decomposition becomes $\calE_{\ext} \cup \calE_{\inte, \ori} \cup \calE_{\inte, \new}$, where 
$\calE_{\inte, \ori}$ consists of the original edges which exist both at $t=0$ and $t > 0$, while $\calE_{\inte,\new}$ 
contains those new edges which arise as internal edges of the self-similar resolution of each internal vertex. 

\subsection{Expanding solitons}

It is now standard in any geometric flow that there is an intimate relationship between singularity formation in
the flow and the existence and nature of self-similar solutions to the flow.  Self-similar collapsing solutions
determine, or at least predict, how singularities form as the network degenerates, whereas self-similar
expanding solutions indicate how singular configurations (i.e., irregular networks in our setting) should 
flow to smooth (regular) ones.  Beyond this, of course, self-similar solutions also provide an 
interest class of examples of solutions.

A network $\Gamma = \{\gamma^{(j)}\}$ is an expanding self--similar solution to network flow if each evolving curve 
$\gamma^{(j)}$ has the form
\begin{equation}\label{solitonlambda}
\gamma^{(j)}(t,x) = \lambda(t) \eta^{(j)}\left(x/\lambda(t)\right); 
\end{equation}
the expansion factor $\lambda(t)$ is determined below. 

\medskip

As in the last subsection, let $\calF = \{\ell^{(1)}, \ldots, \ell^{(k)}\}$ be a fan of rays emanating from the origin in $\RR^2$.
Inspired by \cite{SS}, the paper \cite{MS} establishes the existence of at least one expanding self-similar solution
of the network flow which limits to $\calF$ as $t \searrow 0$. When $k = 3$, this solution is unique, but for $k > 3$
there are many (in fact, the number of solutions grows very rapidly as $k$ increases). 

First, a short calculation obtained by inserting \eqref{solitonlambda} into \eqref{evolution} and fixing the 
normalization $\lambda(1/2) = 1$ leads to the \emph{expander equation} 
\begin{equation}\label{expanders}
\kappa- \left\langle\eta, \nu\right\rangle=0, \qquad \lambda(t)=\sqrt{2t}.
\end{equation}
This equation is equivalent to the geodesic equation for the metric $g = e^{|x|^2} |dx|^2$ on $\RR^2$. 
The $t=1/2$ slice of a regular expanding self-similar network consists of a union of curves, each an arc 
or half-ray solution of \eqref{expanders}, which together constitute a regular network. 
Such a network is a stable critical point of the length functional in $(\RR^2,g)$ (where, as usual, it
suffices to look at the length of networks in any large ball $B_R(0)$).     The condition that 
expanding self-similar solution be a flowout of the initial fan $\calF$ means that for each external edge,
the image of $x \mapsto \sqrt{2t}\, \eta^{(j)}(x/\sqrt{2t})$ converges to $\ell^{(j)}$ as a set as $t \searrow 0$. 

The individual geodesic arcs and rays for the metric $g$ are qualitatively similar to geodesics in hyperbolic space,
as expected since the curvature of $g$ is everywhere negative.  For example, if $\ell$ and $\ell'$ are any two 
rays emanating from the origin, then there is a unique complete geodesic for this metric which is asymptotic to these rays 
along its two ends.  A convenient way to picture this is to realize the geodesic compactification of $(\RR^2, g)$ as
a closed ball $B$.  A limiting direction, i.e., the asymptotic limit of any ray $\ell$, corresponds to a point $q \in \del B$.
Thus the specification of the $k$ rays in the fan $\calF$ is the same as choosing $k$ distinct points
$q_1, \ldots, q_k \in \del B$. A solution of this problem, i.e., the configuration at $t=1/2$ of an expanding 
self-similar solution emanating from $\calF$ is thus a geodesic network for this metric. If there are no loops, then it is a
Steiner tree for $(\RR^2, g)$. 

\begin{proposition}\cite[Main Theorem]{MS}
Let $\mathcal{F} = \{\ell^{(1)}, \ldots, \ell^{(k)}\}$ be a fan of rays in $\RR^2$ emanating from the origin, and $q_1, \ldots, q_k$ the
corresponding points (listed in cyclic order) around $\del B$. Then the set of expanding self-similar solutions of the network flow 
with initial datum $\calF$ is in bijective correspondence with the set of (possibly disconnected) regular networks on $B$, each arc 
of which is a geodesic for the metric $g$, with boundary $\{q_1, \ldots, q_k\}$. For each decomposition of the set of boundary
points into subsets $J_i$ of collections of adjacent points, there is at least one geodesic Steiner tree whose components
are characterized by having boundary in the points $q_j$, $j \in J_i$. 
\label{Steiner}
\end{proposition}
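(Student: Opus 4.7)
The plan is to reduce the soliton problem to a purely geometric one: classifying Steiner networks of geodesics in the complete, negatively curved conformally flat surface $(\RR^2,g)$ with $g = e^{|x|^2}|dx|^2$. First I would substitute the self-similar ansatz \eqref{solitonlambda} into \eqref{evolution}; projecting $\partial_t \gamma^{(j)}$ onto $\nu^{(j)}$ and using that $\kappa$ scales by $\lambda^{-1}$ gives $\lambda \lambda'\,\langle \eta^{(j)}, \nu^{(j)}\rangle = \kappa^{(j)}$. Since the right-hand side is time-independent, $\lambda\lambda'$ must be a positive constant; the normalization $\lambda(1/2)=1$ then forces $\lambda(t) = \sqrt{2t}$ and leaves \eqref{expanders} at $t = 1/2$.

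The crucial observation is that \eqref{expanders} is exactly the geodesic equation for the conformal metric $g = e^{|x|^2}|dx|^2$. Indeed, under a conformal change $\tilde{g} = e^{2\phi}|dx|^2$ on $\RR^2$ the geodesic curvature transforms by $\tilde\kappa = e^{-\phi}(\kappa - \partial_\nu \phi)$; choosing $\phi(x) = |x|^2/2$ gives $\partial_\nu \phi = \langle x, \nu\rangle$, so $\tilde\kappa = 0$ recovers exactly \eqref{expanders}. Because conformal maps preserve angles, the Herring $2\pi/3$ condition at triple junctions is the same when measured in $g$ as in the Euclidean metric. Hence a regular expanding soliton at $t = 1/2$ is precisely a network of $g$-geodesic arcs meeting three at a time with equal angles, i.e., a stable critical point of the $g$-length functional, and conversely any such network produces an expanding soliton by the ansatz \eqref{solitonlambda}.

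Next I would address the boundary behavior. A direct computation gives Gaussian curvature $K_g = -2e^{-|x|^2} < 0$, and $d_g(0,x) = \int_0^{|x|} e^{r^2/2}\,dr \to \infty$ as $|x|\to\infty$, so $(\RR^2,g)$ is complete and negatively curved, hence admits a geodesic/visual compactification that I would identify with a closed disk $B$. Each Euclidean ray $\ell^{(j)}$ defines a unique ideal boundary point $q_j \in \partial B$, since two distinct Euclidean rays diverge from one another at exponential rate in $g$. Conversely, along an external edge of an expanding soliton the expander equation forces the angle between position vector and tangent to go to zero at infinity, so each external edge is asymptotic to a Euclidean ray and terminates at some $q_j \in \partial B$. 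Finally, for any decomposition of the cyclically ordered set $\{q_1,\ldots,q_k\}$ into intervals $J_i$ of adjacent points, I would construct one geodesic Steiner tree on each $J_i$ by minimizing $g$-length among networks of a chosen tree topology with prescribed ideal boundary; the first variation forces all interior vertices to be trivalent and to meet at $2\pi/3$, and combining the components across $i$ yields the required $g$-geodesic network realizing that decomposition.

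The main technical obstacle is the existence step: minimizing sequences must be prevented from losing mass out to $\partial B$ or collapsing interior vertices onto each other or onto the boundary, and one must check that each topological type of tree on each block $J_i$ is actually realized by a minimizer. The strong coercivity from the exponential density $e^{|x|^2}$ provides the key estimates, essentially replacing the convexity arguments used in hyperbolic-space constructions of Steiner trees; and the negative curvature of $g$ gives the uniqueness in the case $k=3$, recovering the result of \cite{SS}.
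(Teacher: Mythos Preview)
The paper does not prove this proposition; it is quoted from \cite{MS}, and the surrounding text already contains the preliminary reductions you reproduce (the derivation of \eqref{expanders} from the self-similar ansatz and the identification of \eqref{expanders} with the geodesic equation for $g=e^{|x|^2}|dx|^2$). So the only substantive content of your proposal to compare is the existence argument for the geodesic networks and the bijection.

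Your outline is in the right spirit but differs from what \cite{MS} actually does, and has two soft spots. First, for the bijection you only argue one direction cleanly (a regular $g$-geodesic network gives an expander via the ansatz); for the converse you need to pass from a solution of the geometric flow \eqref{evolution} to the specific parametrized form \eqref{soleq}, which is exactly the content of Lemma~\ref{reparametrisation} in the present paper. Without that step the correspondence is only between geometric solitons and geodesic networks, not the parametrized solitons used later. Second, and more seriously, your existence step (``minimize $g$-length among networks of a chosen tree topology with prescribed ideal boundary'') is not the method of \cite{MS} and is delicate as stated: minimizing in a fixed combinatorial class does not a priori prevent interior edges from collapsing to zero length, which would change the topology and produce a higher-valence vertex rather than the regular tree you want. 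The paper itself notes in \S8.2 that the construction in \cite{MS} proceeds instead by minimizing $1$-rectifiable currents with coefficients in $\mathbb Z_r$ for suitable $r$; this is what guarantees that a genuine regular (trivalent, $2\pi/3$) network is obtained and that the components for different blocks $J_i$ are disjoint. Your coercivity remark about the weight $e^{|x|^2}$ is correct and is what keeps mass from escaping to $\partial B$, but it does not by itself handle the topology-preservation issue.
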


We `normalize' by only considering solutions of \eqref{evolution3} rather than \eqref{evolution}. 
\begin{definition}
An \emph{expanding soliton} is a solution $\Gamma(t) = \{ \gamma^{(j)}(t,x) \}$ of the \emph{network flow} 
with initial network $\Gamma_0$, where for each $j$, 
\begin{equation*}
\gamma^{(j)}(t,x) = \sqrt{2t} \, \eta^{(j)}(x/\sqrt{2t}),  
\end{equation*}
where, if we set $s = x/\sqrt{2t}$, then each $\eta^{(j)}$ solves
\begin{equation}\label{soleq}
\frac{\del_s^2 \eta^{(j)}}{|\del_s \eta^{(j)}|^2} + ( s\del_s\eta^{(j)} - \eta^{(j)}) = 0\,.
\end{equation}
This soliton is called regular if the curves $\eta^{(j)}(s)$ are arcs of rays of a regular geodesic network in $(\RR^2, g)$,
and minimizing if  it  is  regular and a  tree.
\end{definition}
The parametrization of each $\gamma^{(j)}$ is singular at $t =0$; we explain how to think about this
in the next section. 

\medskip

To derive \eqref{soleq}, define $s = x/\sqrt{2 t}$ as above, and $\tau = \sqrt{2t}$. Thus $\gamma^{(j)} = \tau \eta^{(j)}(s)$. 
We then compute that
$$
\partial_t= \frac{1}{\sqrt{2}t}\partial_\tau-\frac{1}{2\sqrt{2}}\frac{x}{\sqrt{t^3}}\partial_s
=\frac{1}{\tau}\partial_\tau-\frac{s}{\tau^2}\partial_s\quad 
\partial_x=\frac{1}{\sqrt{2t}}\partial_s=\frac{1}{\tau}\partial_s\quad
\partial^2_x=\frac{1}{2t}\partial^2_s=\frac{1}{\tau^2}\partial^2_s\, ,
$$
and hence the flow equation reduces to
\[
\left(\frac{1}{\tau}\partial_\tau-\frac{s}{\tau^2}\partial_s\right)(\tau\eta(s))
=\frac{1}{\tau}\frac{\partial_s^2\eta(s)}{\vert \partial_s\eta(s)\vert^2}.
\]
This is the same as \eqref{soleq}.

\medskip

Later we shall require precise asymptotic information about the solutions $\eta$ to this equation. 
\begin{lemma}
Let $\eta$ be a solution to \eqref{soleq}. Then as $s \to \infty$, 
\begin{equation*}
\eta(s) = as + \calO(e^{-Cs^2})
\end{equation*}
for some $a \in \RR^2$ and $C > 0$. 
\end{lemma}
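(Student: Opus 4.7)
The plan is to read this lemma as an ODE decay statement about the expander equation~\eqref{soleq}. The starting observation is that a straight ray through the origin, $\eta(s) = as$ with $a \in \RR^2$, is an exact solution of~\eqref{soleq} (since $\partial_s^2\eta = 0$ and $s\partial_s\eta - \eta \equiv 0$), and an unbounded external edge of an expanding soliton is asymptotic to such a ray of the initial fan $\calF$. The first step is therefore to prove the existence of an asymptotic velocity: there is an $a \in \RR^2\setminus\{0\}$ with $\eta(s)/s \to a$ and $\partial_s\eta(s) \to a$ as $s \to \infty$. This follows from the geodesic interpretation used in Proposition~\ref{Steiner}: $\eta$ is a complete $g$-geodesic in the negatively curved metric $g = e^{|x|^2}|dx|^2$, and such a geodesic has a well-defined endpoint on the geodesic compactification together with stabilization of its direction. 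Alternatively, one can argue directly from the projection of~\eqref{soleq} onto the unit tangent $\tau = \partial_s\eta/|\partial_s\eta|$, which yields $\langle \eta, \tau\rangle = |\partial_s\eta|^{-2}\partial_s|\partial_s\eta| + s|\partial_s\eta|$, an identity that forces $|\partial_s\eta|$ to a finite positive limit $c > 0$.

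Next, write $\eta(s) = as + \rho(s)$, where $\rho = o(s)$ by the previous step, and substitute into~\eqref{soleq} to obtain
\begin{equation*}
\rho''(s) + |a + \rho'|^2\bigl(s\rho' - \rho\bigr) = 0,
\end{equation*}
whose linearization around $\rho = 0$ is the componentwise scalar equation $u'' + c^2 s u' - c^2 u = 0$ with $c = |a|$. This linear equation has the obvious solution $u_1(s) = s$ (corresponding to an infinitesimal perturbation of $a$). Reduction of order with ansatz $u = sv$ gives $(sv')' + (1 + c^2 s^2)v' = 0$, which integrates to $v'(s) = Cs^{-2} e^{-c^2 s^2/2}$, hence a second linearly independent solution
\begin{equation*}
u_2(s) = s\int_s^\infty t^{-2} e^{-c^2 t^2/2}\,dt \sim c^{-2}\, s^{-2}\, e^{-c^2 s^2/2} \quad \text{as } s \to \infty.
\end{equation*}
The sublinear growth $\rho(s)/s \to 0$ obtained in Step~1 rules out the $u_1$ mode, so at the linear level the correction is governed by the exponentially decaying $u_2$.

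Finally, the plan is to promote this formal asymptotics to a rigorous nonlinear decay estimate. Rewriting the ODE for $\rho$ via variation of parameters as
\begin{equation*}
\rho(s) = s\int_s^\infty t^{-2} e^{-c^2 t^2/2}\, F[\rho](t)\, dt,
\end{equation*}
where $F[\rho]$ collects the nonlinear remainder $\bigl(|a+\rho'|^2 - c^2\bigr)(s\rho' - \rho)$, which is at least quadratic in $(\rho, \rho')$, one can set up a contraction in the exponentially weighted norm $\|\rho\| = \sup_{s \geq s_0} e^{Cs^2}\bigl(|\rho(s)| + |\rho'(s)|\bigr)$ for any fixed $C < c^2/2$ and $s_0$ large enough. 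The quadratic nature of $F$ makes this a standard contraction on a small ball in the weighted space, and its unique fixed point is the tail of $\rho$, giving $\rho(s) = \calO(e^{-Cs^2})$ as required. The main obstacle I expect is closing this bootstrap while ruling out reactivation of the growing mode $u_1$ by the nonlinearity: this is precisely what forces me to establish the stronger asymptotic $\partial_s\eta(s) \to a$ in Step~1, since the coefficient of $u_1$ in any expansion of $\rho$ is detected exactly by such a limit.
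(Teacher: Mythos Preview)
Your approach---linearize around the asymptotic ray and run a contraction in an exponentially weighted space---is different from the paper's and is a sound strategy in principle, but Step~1 as written is a genuine gap, and unfortunately it is precisely the heart of the matter.

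The geodesic argument gives you only that the \emph{unit} tangent $\tau = \eta'/|\eta'|$ stabilizes (direction convergence to a boundary point in the geodesic compactification), not that $|\eta'(s)|$ has a finite positive limit; the parametrization in~\eqref{soleq} is not $g$-arclength, so nothing in the negatively-curved picture controls the speed in this parameter. Your ``alternatively'' identity $\langle\eta,\tau\rangle = |\eta'|^{-2}\partial_s|\eta'| + s|\eta'|$ is correct, but you do not extract anything from it; as stated it is consistent with $|\eta'|\to 0$ or $|\eta'|$ oscillating. Without a positive lower bound on $|\eta'|$, your linearized operator $u'' + c^2 s u' - c^2 u$ is not the right model (the coefficient $|a+\rho'|^2$ need not stay near $c^2$), and the contraction cannot close.

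Compare with the paper's argument: setting $\mu = s\eta'-\eta$, $\sigma = |\mu|^2$, $\nu = |\eta'|^2$, one gets $(\log\sigma)' = -2s\nu$ and $\nu' \geq -2\nu^{3/2}\sigma^{1/2}$, and the entire proof is a direct bootstrap to show $\nu \geq C > 0$. Once that is in hand, the exponential decay of $\sigma$ is immediate from $(\log\sigma)' \leq -Cs$, and $(s^{-1}\eta)' = s^{-2}\mu$ gives the conclusion in one line---no linearization or contraction needed. In effect, your linearization machinery is aimed at the easy half (decay given $\nu$ bounded below), while your Step~1 defers the hard half (proving $\nu$ bounded below) to an unproved assertion. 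If you want to salvage your route, you need to supply a self-contained ODE argument that $|\eta'|\geq C>0$ on $[s_0,\infty)$; the paper's bootstrap on $(\sigma,\nu)$ is one clean way to do exactly that.
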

\begin{proof}
Define the quantities
\[
\mu = s\del_s \eta - \eta,\ \sigma = |\mu|^2,\ \mbox{and}\ \ \nu = |\del_s\eta|^2;
\]
using $'$ for $\del_s$ for simplicity, we  compute 
\[
\mu' = s \eta'' = - s \nu \mu,\ \ \sigma' = -2s \nu \sigma,\ \ \mbox{and}\quad \nu' = 2\langle \eta', \eta''\rangle=-2\nu\langle \eta', \mu\rangle
\geq -2\nu^{3/2} \sigma^{1/2}. 
\]

The equation for $\sigma$ gives $(\log \sigma)' = -2s \nu \leq 0$, hence $\sigma(s) \leq \sigma(0)$ for $s > 0$, while 
\begin{equation}
\nu^{-3/2} \nu' \geq -2\sigma^{1/2} \Rightarrow \nu(s) \geq  \left( C_1 +  \int_1^s \sigma^{1/2}(s')\, ds'\right)^{-2}.
\label{estnu}
\end{equation}

Our proximate goal is to show that $\nu(s) \geq C > 0$ as $s \to \infty$. Indeed, if that is the case, then
$(\log \sigma)' \leq  -Cs$, hence $\sigma \leq C_1 e^{-C_2 s^2}$.   

To show this positive lower bound on $\nu$, insert the initial estimate $\sigma \leq C$ into \eqref{estnu} to get $\nu \geq C/s^2$, hence 
$-s\nu \leq -C/s$.  Using this in the equation for $(\log \sigma)'$ yields 
$\sigma(s) \leq C s^{-\alpha}$ for some $\alpha > 0$.    We now repeat this argument, taking advantage of this improved bound.
There are three possible estimates for $\int_1^s \sigma^{1/2}(s')\, ds'$, depending on whether $\alpha < 2$, $\alpha = 2$ or
$\alpha > 2$.  In the first case, we derive that 
\[
\int_1^s \sigma^{1/2} \leq C s^{1-\alpha/2} \ \Rightarrow\ \nu \geq C s^{\alpha-2} \ \Rightarrow\ (\log \sigma)' \leq - C s^{\alpha-1}
\ \Rightarrow\ \sigma \leq C e^{-Cs^{\alpha}} \ \Rightarrow\ \nu \geq C > 0.
\] 
In the third case, $\int_1^s \sigma^{1/2} \leq C$, which gives directly the positive lower bound for $\nu$. The borderline 
case between these two produces an exponential decay rate for $\sigma$ which implies as before that $\nu \geq C > 0$. 

Returning to the original equation for $\sigma$, a positive lower bound for $\nu$ shows that $\sigma \leq C_1 e^{-C_2 s^2}$. 
With slightly more care, one can deduce that $C_2 = 1$, but this is not important for our purposes. 

Now observe that $( s^{-1} \eta)' = s^{-1} \eta' - s^{-2} \eta = s^{-2} \mu$ decays like $e^{-Cs^2}$, so we conclude finally that
$$
\eta(s) = as + \calO(e^{-Cs^2})
$$
for some $a \in \RR^2$. 
\end{proof}

\begin{lemma}\label{reparametrisation}
The solutions of~\eqref{expanders} and \eqref{soleq} are in bijective correspondence. 
\end{lemma}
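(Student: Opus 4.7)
The plan is to verify the two directions of the equivalence separately, exploiting the fact that \eqref{expanders} is manifestly invariant under reparametrization (both $\kappa$ and $\nu$ are geometric), while \eqref{soleq} encodes the particular parametrization dictated by the gauge choice $\zeta=|\partial_x\gamma|^{-3}\langle\partial_x^2\gamma,\partial_x\gamma\rangle$ made in the previous subsection. The claimed bijection is then between (image) curves satisfying the geometric equation \eqref{expanders} and specifically parametrized solutions of \eqref{soleq}, each geometric expander admitting a distinguished parametrization singled out by this gauge.

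For the easy direction $\eqref{soleq}\Rightarrow\eqref{expanders}$, I would simply project the vector equation onto the unit normal $\nu$. Since $\partial_s\eta$ is tangent to the curve we have $\langle s\partial_s\eta,\nu\rangle=0$, and the standard identity $\langle\partial_s^2\eta,\nu\rangle/|\partial_s\eta|^2=\kappa$ (valid in any regular parametrization) yields immediately $\kappa-\langle\eta,\nu\rangle=0$, which is \eqref{expanders}.

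For the reverse direction, given a regular arc $\eta$ satisfying \eqref{expanders}, parametrized by arc length without loss of generality, I would look for a monotone reparametrization $\tilde\eta(s)=\eta(\phi(s))$ satisfying \eqref{soleq}. Substituting $\tilde\eta'=\phi'(\eta'\circ\phi)$ and $\tilde\eta''=\phi''(\eta'\circ\phi)+(\phi')^2(\eta''\circ\phi)$ into \eqref{soleq}, the normal component reduces to \eqref{expanders} for $\eta$ evaluated at $\phi(s)$ and so is automatic. The tangential component, obtained by taking the scalar product with $\eta'\circ\phi$ and using $|\eta'|=1$ and $\langle\eta'',\eta'\rangle=0$, simplifies to the scalar second-order ODE
\[
\frac{\phi''}{(\phi')^2}+s\phi'-\langle\eta,\eta'\rangle\circ\phi=0,
\]
whose coefficients are smooth wherever $\phi'>0$. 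Local existence and uniqueness then follow from standard ODE theory (for instance by rewriting it as a first-order system in $(\phi,w)$ with $w=1/\phi'$), and at the expense of a one-parameter rescaling of the initial data one sees that the parametrization is determined up to an inessential constant.

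The main (and only) obstacle to the proof is the global prolongation of $\phi$ over the whole relevant parameter domain. On compact interior arcs of $\eta$ this is a routine a priori bound on $\phi'$. On the unbounded external arcs, the image curve is asymptotic to a ray (as follows from the structure of geodesics of the metric $g=e^{|x|^2}|dx|^2$ recalled in Proposition~\ref{Steiner}, and matches the asymptotics $\eta(s)=as+\calO(e^{-Cs^2})$ proved in the previous lemma), so $\eta'$ stays bounded and $\langle\eta,\eta'\rangle\circ\phi$ has controlled linear growth; one checks directly that the linear candidate $\phi(s)\sim s$ is an asymptotic solution of the ODE at infinity, and a continuity argument then prolongs $\phi$ to the full half-line while preserving $\phi'>0$.
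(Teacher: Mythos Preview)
Your argument is correct, and the two directions are handled cleanly. The projection onto $\nu$ for $\eqref{soleq}\Rightarrow\eqref{expanders}$ matches the paper's reasoning exactly. For the converse, however, your route is genuinely different from the paper's.

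The paper does not construct the reparametrization $\phi$ explicitly. Instead it argues as follows: a solution of \eqref{expanders} determines, via the self-similar ansatz, a regular network at $t=1/2$; short-time existence for regular networks then furnishes a solution of the parametrized flow \eqref{evolution3} with this initial datum; and the uniqueness theorem \cite[Theorem~4.14]{Man} (regular solutions are unique up to reparametrization) forces this parametrized solution to coincide, as a set, with the self-similar evolution. The change of variables $s=x/\sqrt{2t}$, $\tau=\sqrt{2t}$ then produces a solution of \eqref{soleq}. In short, the paper trades the ODE analysis for a black-box appeal to existence and uniqueness of the regular network flow.

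Your approach is more elementary and self-contained: it avoids invoking any prior well-posedness theory for the flow and reduces everything to a scalar second-order ODE for $\phi$. The price is the global prolongation step, which you correctly identify as the only nontrivial point. Your sketch there is adequate---on compact interior arcs the coefficients are bounded and the repulsion of $w=1/\phi'$ from zero (via the $s/w$ term for $s>0$) prevents degeneration, while on unbounded arcs the asymptotics $\eta(s)\sim as$ from the preceding lemma make $\phi(s)\sim s$ an asymptotic solution, and a standard perturbation argument closes the gap. The paper's route is shorter to write down but less transparent about why the specific parametrization in \eqref{soleq} exists; yours makes that mechanism explicit.
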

\begin{proof}
Any solution of the network flow is a solution of the motion by curvature, so it suffices to consider only the normal 
component of the velocity. If $\gamma^{(j)}(t,x) = \sqrt{2t} \eta^{(j)}(x/\sqrt{2t})$ is a solution of the network flow,
then $\eta^{(j)}$ is a solution of~\eqref{expanders}. On the other hand, fix a solution of~\eqref{expanders}. 
This is the value at $t=1/2$ of a self-similar solution of the network flow, and the collection of $\eta^{(j)}$ 
determines a regular network. By~\cite[Theorem 4.14]{Man}, regular solutions of this flow are unique 
up to a reparametrization. The change of variable $s = x/\sqrt{2t}$, $\tau = \sqrt{2t}$ yields a  solution of~\eqref{soleq}. 
\end{proof}

Combining Proposition~\ref{Steiner} and Lemma~\ref{reparametrisation}, we see that for every $\calF$, there exists
at least one expanding soliton that flows out from $\calF$, and that these flowouts are in one-to-one correspondence
with geodesic networks with asymptotic boundary values determined by $\calF$.

\section{Parabolic blowups and the lifted flow}
We now come to the main conceptual step of this paper. This involves recasting the flow equations by introducing 
parabolic blowups of the interior vertices of the network at $t=0$, both in the domain and the range. This process 
exhibits the true homogeneities of the problem and leads to the precise regularity theory below. It also makes
the appearance of solitons particularly natural. 

\medskip

Consider for the moment the simplest situation, where $\Gamma_0=\{\gamma_0^{(j)}\}_{j=1}^3$ is a ``smooth triod'', i.e., 
a network with exactly three curves meeting at a regular vertex, the other three endpoints are three distinct external vertices. 
Even in this case a solution $\Gamma_t$ of the network flow is not necessarily smooth at the interior vertex at $t=0$.
In such a situation, smoothness at this `corner' only holds if the initial triod $\Gamma_0=\{\gamma_0^{(j)}\}_{j=1}^3$ satisfies 
an infinite set of compatibility conditions at this vertex. This is explained for the nonlinear flow in \cite{BrRe,MNT}. 
These compatibility conditions are obtained  
by taking successive derivatives of the equation and boundary conditions and using these to equate mixed partials of the 
various $\gamma^{(j)}$ at the corner $x=t = 0$.  Equality of mixed partials of course presupposes smoothness
at this corner. Thus, for example, the first compatibility condition comes from evaluating \eqref{mbc} at this corner.  The next 
condition arises by combining the $t$ derivative of the first equation in \eqref{mbc} with the equation of motion \eqref{evolution3}, 
again evaluated at the corner, to get
\begin{equation*}
\frac{\partial_x^2\gamma_0^{(i)}(0)}{\vert\partial_x\gamma_0^{(i)}(0)\vert^2}
=\frac{\partial_x^2\gamma_0^{(j)}(0)}{\vert\partial_x\gamma_0^{(j)}(0)\vert^2}
\end{equation*}
for every $i \neq j$.  The full sequence of compatibility conditions is obtained similarly. 

This issue is not special to this systems, of course.  Even for the most basic linear parabolic problem on a manifold with 
boundary $\Omega$, a solution of a mixed Cauchy--Dirichlet boundary problem 
\begin{equation*}
(\del_t - \Delta) u = 0, \qquad  u(0, \cdot) 
= \phi\in \calC^\infty, \quad \left. u \right|_{\del \Omega \times \RR^+} = \psi \in \calC^\infty
\end{equation*}
is not smooth at $\del \Omega \times \{0\}$ unless infinitely many compatibility conditions relating the Taylor series of $\phi$ 
at $\del \Omega$ and $\psi$ at $t=0$ are satisfied.  While certainly known as folklore for a century, this was 
investigated in some detail by Smale \cite{Smale}.  Our point is that it is not just a minor curiosity, but has some 
important consequences. For example, if the compatibility conditions are only satisfied up to some order $K$, then the mixed 
boundary problem can only be solved in $\calC^{ k+\delta, (k+\delta)/2}([0,T)\times \Omega)$ when $k \leq K$.  
In particular, if $\phi \equiv 0$, the Dirichlet heat semigroup is not strongly continuous on $\calC^{k+\delta}(\Omega)$. 

It is not a standard perspective, but also not hard to prove, that for this linear problem, $u$ becomes smooth if we pass to the space obtained 
by parabolically blowing up this corner. This means the following. Choose local coordinates $(x,y)$ on $\Omega$ with $x$ a defining function 
for the boundary. Now introduce ``parabolic polar coordinates'' in $(x,t)$, namely $x = r \cos \theta$, $t = r^2 \sin \theta$ (or equivalently,
$r = \sqrt{x^2 + t}$, $(x/r, t/r^2) = (\cos \theta, \sin \theta)$.  The parabolic blowup
\[
\Omega_h := [\Omega \times \RR^+; \del \Omega \times \{0\} ; dt] 
\]
is the space on which $r, \theta, y$ become smooth coordinates. This is called the heat single space and has corners up to 
codimension $2$; it has the two original boundary hypersurfaces, $\del \Omega \times \RR^+$ and $\Omega \times \{0\}$, as well
as the new hypersurface $r = 0$, which we call the front face and denote $\ff$.  
It is often more convenient to use projective coordinates on this space. One such coordinate system, for example, is
\[
s = x/\sqrt{t},  \tau = t, y,
\]
while another is
\[
\sigma = t / x^2,  \eta = x, y.
\]
Note that $(s,\tau,y)$ is only a nonsingular coordinate system on $\ff$ and the lift of $\del \Omega \times \RR^+$, while $(\sigma, \eta, y)$ 
is only nonsingular on $\ff$ and $\Omega  \times \{0\}$.  The regularity theorem stated above says, more precisely, that if $u$ is a solution 
to the Cauchy-Dirichlet problem above, with $\phi, \psi$ both smooth, then the lift of $u$ lies in $\calC^\infty(\Omega_h)$. The new front 
face intermediates between $\phi$ and $\psi$ and regularizes the solution.

\subsection{Blowup constructions}
Let us now return to the problem at hand. Suppose that $\Gamma_0$ is a (possibly irregular) network in a compact smoothly
bounded domain $\Omega \subset \RR^2$.  We shall introduce two separate blowups, one of the domain and the other of the range, 
which `regularize' the network flow problem. 

Recalling the notation of \S 2, consider an initial network $\Gamma_0=\{\gamma_0^{(j)}\}_{j=1}^n$ where each
$\gamma_0^{(j)}$ is a smooth map $[0,1]\to\mathbb{R}^2$. The images of these maps constitute the set of initial edges $\calE_0$.
This network has a set of external vertices $q_1, \ldots, q_r \in \del \Omega$ and interior vertices $p_1, \ldots, p_s$.
For simplicity, we require that the compatibility conditions are satisfied to all orders at every exterior vertex; these conditions state
that $\partial_x^i \gamma_0^{j}(1)=0$ for all $i \geq 2$ and indices $j$ corresponding to exterior edges.  If these conditions
are all satisfied, then classical methods are sufficient to prove estimates near these boundary points.  It is straightforward to modify
the constructions and arguments of this paper to include this analysis, and in fact to extend this to the case where these
compatibility conditions are not satisfied.  We leave this to the interested reader.

The eventual solution will be an evolving network $\Gamma(t) = \{\gamma^{(j)}\}$ with $m \geq n$ curves. Extra curves
are generated at every interior vertex where the  valence is greater than $3$.   We may  regard this entire network as
a collection of mappings from a disjoint union of regions, each in the $(t,x)$ plane. These regions are labelled $Q_j$, $j = 1, \ldots, n$,
for the evolutions of each of the initial curves $\gamma^{(j)}_0 \in \Gamma_0$ and  $P_j$, $j = n+1, \ldots, m$, for
the new families of curves generating out of the vertices.  We shall often think of  each curve  $\gamma^{(j)}$ simultaneously as a $1$-parameter 
family of maps $x \mapsto \gamma^{(j)}(t,x) \in \Omega$ but also as a map $(t,x) \mapsto (t, \gamma^{(j)}(t,x)) \in \RR^+ \times \Omega$. 
For simplicity, no separate notation is introduced for the latter; this conflation should not lead to any confusion. 

\medskip

\noindent{\bf Blowup of the domain:} 
For each $j = 1, \ldots, n$, let $Q^{(j)}=\{ (t,x) \in \mathbb{R}^2\, :  0 \leq t,\  0\leq  x\leq 1\}$ be the domain parametrizing
the evolution of $\gamma^{(j)}_0$.  If this is an interior edge, we define the parabolic blowup
\[
Q_h^{(j)}=[ Q^{(j)}, (0,0) \cup (0,1); dt];
\]
this is, by definition, the set obtained by replacing each of the two points $(0,0)$ and $(0,1)$ by their inward-pointing unit parabolic normal bundles,
and endowing the resulting space with the natural topology and smooth structure for which the lifts of smooth functions 
on $Q^{(j)}$ and parabolic polar coordinates around these two corner points are both smooth. These parabolic polar coordinates are defined near
$(0,0)$ as
\[
\rho = \sqrt{t + x^2} \geq 0,\ \ \omega = \arcsin(t/\rho^2) = \arccos(x/\rho) \in [0, \pi/2] 
\Leftrightarrow  (t,x) = (\rho \cos  \omega, \rho^2 \sin \omega)\,,
\]
and similarly near $(0,1)$.
The space $Q_h^{(j)}$ has two new boundary faces, called the front faces of $Q_h^{(j)}$, and denoted $\ff_0$ and $\ff_1$ (or just $\ff_0$
if we do not need to specify). If $\gamma^{(j)}_0$ is an exterior edge, then $Q_h^{(j)}$ is defined by only blowing up the corner at $(0,0)$ (see Figure \ref{Fig1}).  

There is another useful way to think about parabolic blowup: points of $\ff_0$, for example, are identified as equivalence classes of smooth paths 
$\zeta(\xi) = (\zeta_1(\xi), \zeta_2(\xi))$ in $Q^{(j)}$, $\xi \geq 0$, with $\zeta(0) = (0,0)$. Two paths $\zeta$, $\tilde{\zeta}$ 
are equivalent, $\zeta \sim \tilde{\zeta}$, if  there exists some $c > 0$ such that $|\tilde{\zeta}_1(s) - \zeta_1(cs)| \leq C s^2$,  
$|\tilde{\zeta}_2(s) - \zeta_2(cs)| \leq C s^3$.  Thus this construction separates out the endpoints of the paths $(\alpha s^2, s)$, 
$0 < \alpha < \infty$; each such path determines a unique point in the interior of $\ff_0$. 
Paths $\zeta(\xi)$ which do not approach the corner parabolically limit to one or the other boundary of $\ff_0$. 

Define
\[
Q_h = \sqcup_{j=1}^n Q^{(j)}_h, \qquad Q = \sqcup_{j=1}^n Q^{(j)}. 
\]
There is a natural blowdown map $\beta: Q_h \to Q$ which is the identity away from $\ff$ and which collapses each 
front face to $(0,0)$ and $(0,1)$, respectively. As above, $\calC^\infty(Q_h)$ is generated by $\beta^*(\calC^\infty(Q))$ 
and the functions $\rho$ and $\omega$. 

Each component of $Q_h$ has either three or four boundary faces: the front faces $\ff_0$ and $\ff_1$, two side faces $\lf$ 
and $\rf$ and the bottom face $\bff$.  Each $\ff_j$ is given by $\rho = 0$ in local parabolic polar coordinates; the left and right faces 
are the vertical sides `above' $\ff$ where $\omega = \pi/2$, and the bottom face is the initial face $t=0$, between
the two front faces, and at $\omega = 0$. The eventual map $\gamma^{(j)}(t,x)$ determined by the evolution will be defined 
on $Q^{(j)}_h$ rather  than $Q^{(j)}$. It has initial condition $\gamma^{(j)}(x)$ on $\bff$ and satisfies the matching (Herring) conditions
along the left and right faces. Its behavior on the front face(s) is one of the key issues addressed below. 

As in \S 2, the evolving network $\Gamma(t)$ will also include a set of new internal edges $\gamma^{(j)}$, $j = n+1, \ldots, m$.
These are defined on the set $P^{(j)}$ defined by $t \geq 0$ and $0 \leq x \leq \sqrt{t}$.  The fact that this region shrinks
to a point at $t=0$ corresponds to the fact that these curves $\gamma^{(j)}$ disappear into the interior vertex points $p_i$.
We also blow these up parabolically at $(0,0)$, obtaining regions
\[
P^{(j)}_h = [ P^{(j)}, (0,0); dt].
\]
In parabolic polar coordinates defined exactly as above, this space has coordinates $(\rho, \omega)$ with $\rho \geq 0$ 
and $0 \leq \omega \leq \arcsin (1/2) = \pi/6$.  We write
\[
P_h = \sqcup_{j=n+1}^m  P^{(j)}_h.
\]
The space $Q_h \sqcup P_h$ is the entire desingularized domain of the evolving network $\Gamma(t)$.

In computations it is usually simpler to work in coordinate systems different than the parabolic polar coordinates above.
In particular, we introduce two sets of projective coordinates near each front face.  In the following, for simplicity,
we describe these only near the front face over $(0,0)$, but these are obviously defined near the front face over $(0,1)$. 

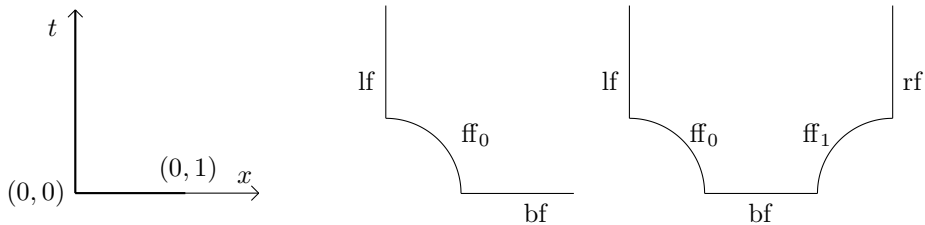
\begin{figure}[h]
\begin{center}
\begin{tikzpicture}[scale=1.5]
\draw[black, scale=0.65]
(0,0)--(0,2.5)
(0,0)--(2.5,0);
\draw[black, thick, scale=0.65]
(0,0)--(0,2.5)
(0,0)--(1.5,0);
\draw[black, scale=0.65, shift={(2.5,0)},  rotate=-90]
(0,0)to[out= -45,in=135, looseness=1] (0.1,-0.1)
(0,0)to[out= -135,in=45, looseness=1] (-0.1,-0.1);
\draw[black, scale=0.65, shift={(0,2.5)},  rotate=0]
(0,0)to[out= -45,in=135, looseness=1] (0.1,-0.1)
(0,0)to[out= -135,in=45, looseness=1] (-0.1,-0.1);
\draw[white]
(0,-0.335)--(0.2,-0.335);
\path[font=\small]
(0,0)[left] node{$(0,0)$}
(1,0)[above] node{$(0,1)$}
(1.5,0)[above] node{$x$}
(-0.2,1.3)[above] node{$t$};
\end{tikzpicture}\qquad\quad
\begin{tikzpicture}
\draw[black]
(0,1)--(0,2.5)
(1,0)--(2.5,0);
\draw[color=black,scale=1,domain=0: 1.57,
smooth,variable=\t,shift={(0,0)},rotate=0]plot({1.*sin(\t r)},
{1.*cos(\t r)}) ; 
\path[font=\small]
(2,0)[below]node{$\bff$}
(1.2, 0.5)[above] node{$\ff_0$}
(0,1.5)[left] node{$\lf$};
\end{tikzpicture}\;
\begin{tikzpicture}
\draw[black]
(3.5,1)--(3.5,2.5)
(0,1)--(0,2.5)
(1,0)--(2.5,0);
\draw[color=black,scale=1,domain=0: 1.57,
smooth,variable=\t,shift={(0,0)},rotate=0]plot({1.*sin(\t r)},
{1.*cos(\t r)}) ; 
\draw[color=black,scale=1,domain=-1.57: 0,
smooth,variable=\t,shift={(3.5,0)},rotate=0]plot({1.*sin(\t r)},
{1.*cos(\t r)}) ; 
\path[font=\small]
(1.75,0)[below]node{$\bff$}
(2.5, 0.5)[above] node{$\ff_1$}
(3.5,1.5)[right] node{$\rf$}
(1, 0.5)[above] node{$\ff_0$}
(0,1.5)[left] node{$\lf$};
\end{tikzpicture}
\end{center}
\caption{The spaces $Q$ and $Q_h$  (in the case $\gamma^{(j)}(0)$ is an interior irregular vertex and 
$\gamma^{(j)}(1)$ is an exterior vertex or in the case both $\gamma^{(j)}(0)$ and $\gamma^{(j)}(1)$ are interior irregular vertices).}
\label{Fig1}
\end{figure}

\begin{itemize}
\item {\bf Coordinates} $(\tau,s)$.\\
Define $\tau=\sqrt{2t}$ and  $s=\frac{x}{\sqrt{2t}}$. Then $(\tau, s)$ is a nondegenerate coordinate system on $Q_h^{(j)}$ near
$\ff_0$ away from $\bff$; there is a corresponding set of projective coordinates near $\ff_1$, of course. In addition,
we may also use these coordinates in $P^{(j)}_h$ near its front face where $\rho = 0$. Note that in each $Q^{(j)}_h$, 
$s \in [0,\infty)$, while in $P^{(j)}_h$, $0 \leq s \leq 1$. The variable $\tau$ is a defining function for the front face in each
of these cases in the sense that it vanishes simply on $\ff$ and nowhere else, and is commensurable with $\rho$ on 
any compact set in $Q_h^{(j)}$ which does not intersect $\rf$, and on the entirety of $P^{(j)}_h$. The variable 
$s$ is a defining function for $\lf$ or $\rf$, and identifies $\mathrm{int}\, \ff$ with $\RR^+$. 

We compute 
\begin{equation*}
\partial_t=\frac{1}{\tau}\partial_\tau-\frac{s}{\tau^2}\partial_s\,,\qquad
\partial_x=\frac{1}{\tau}\partial s\,,\qquad \partial^2_x=\frac{1}{\tau^2}\partial s^2\,,
\end{equation*}
and hence the linear heat equation becomes
\begin{equation}\label{heatcoordstau}
\del_t - \del_x^2 = \frac{1}{\tau^2} \left(\tau\partial_\tau u-s\partial_s u-\partial^2_s u\right)=0\,.
\end{equation}

The singular factor on the right is inevitable because the two sides of this equality must both have homogeneity
$-2$ with respect to the scaling $(t,x) \mapsto (\lambda^2 t, \lambda x)$, and $s$ and $\tau$ are homogeneous
of degree $0$.  In any case, it is sufficient to study the operator 
\begin{equation*}
 \tau\del_\tau - s \del_s - \del_s^2.
\end{equation*}

\item {\bf Coordinates} $(T,y)$.\\
The $(\tau,s)$ coordinates are not valid near the bottom face where $t=0$ and in particular near the intersection
of $\ff$ and $\bff$. Near this corner we introduce an alternate set of projective coordinates $y=x$ and $T=\frac{t}{x^2}$.  
These are singular along the positive $t$-axis; the variable $y$ is now the defining function for $\ff$ and $T$ is the defining 
function for $\bff$. 

The expressions for $\del_t$ and $\del_x$ in these new coordinates are
\begin{equation*}
\partial_t=\frac{1}{y^2}\partial_T\,,\qquad \partial_x=\partial_y-\frac{2T}{y}\partial_T\,,\qquad
\partial^2_x=\partial^2_y-\frac{4T}{y}\partial_y\partial_T+\frac{4T^2}{y^2}\partial^2_T
+\frac{6T}{y^2}\partial_T\, ,
\end{equation*}
and now the the linear heat operator takes the much less tractable form
\begin{equation*}
\frac{1}{y^2}\left(\partial_Tu-6T\partial_Tu-4T^2\partial_T^2u
+4Ty\partial_T\partial_yu-y^2\partial^2_y u\right)=0.
\end{equation*}
As before, it suffices to consider the slightly nicer looking operator 
\begin{equation}
\del_T-6T\del_T-4T^2\del_T^2
+4Ty\del_T\del_y-y^2\del^2_y.
\label{ho2}
\end{equation} 
\end{itemize}

We also record that
\begin{equation*}
T=\frac{1}{2s^2}, \quad y=s\tau.
\end{equation*}

This change of variables leads to sufficiently ungainly expressions that we do not use this coordinate system for
any serious calculations. 

\medskip

\noindent {\bf Blowup of the range:}  The dilation properties of expanding solitons suggest that the
homogeneity in the range should also be emphasized. In other words, the ansatz that solitons take the form 
$\gamma(t,x) = \sqrt{2t} \eta( x/\sqrt{2t})$ already indicates the relevance of the projective coordinate $s = x/\sqrt{2t}$, 
but also suggests the change of variable $z \in \RR^2$ to $w = z/\sqrt{2t}$ in the range. We formalize this as follows.

Define $Z = \RR^+_t \times \RR^2_z$.  Following the remark just before \S 3.1, in the following we regard each 
$\gamma^{(j)}$ as a map into $\RR^+ \times \Omega$ via $(t,x) \mapsto (t, \gamma^{(j)}(t,x))$.  As suggested
by the preceding paragraph, we lift this map by blowing up both the domain and the range. In other words, 
each lift should be regarded as a map
\[
\gamma^{(j)}(t,x): Q_h^{(j)} \longrightarrow  Z_h = [Z, \{(0,p_1), \ldots, (0,p_s)\} ; dt].
\]
This latter space is obtained from $Z$ by taking the parabolic blowup of $Z$ at each of the interior vertices $p_i$ at $t=0$.
This parabolic blowup is defined exactly as before, by replacing each $(0,p_i)$ with the inward-pointing spherical
parabolic normal bundle. As before, this becomes more tangible in locally defined parabolic polar coordinates.
Fixing one interior vertex $p_i$, suppose that $p_i = 0$ and define
\[
R = \sqrt{t + |z|^2}, \ \ \Theta =  (t/ R^2, z/R) = (\Theta_0, \Theta').
\]
Then $Z_h$ has a front face $\ff = \{R=0\}$ and a bottom face $\bff = \{\Theta_0 = 0\}$. There is a codimension two corner 
where these two faces intersect.  

There are useful projective coordinates here too, namely
\[
\tau = \sqrt{2t}, \qquad w = z/\sqrt{2t}.
\]
Thus $\tau = 0$ is a defining function for $\ff$, while $w$ is a projectively natural linear coordinate {\color{violet}for} $\ff$. These 
coordinates are valid away from $\bff$. Thus $(t, \gamma^{(j)}(t,x))$ lifts to $(\frac12 \tau^2,  \tau 
\eta^{(j)}(\tau,s))$. 

\medskip

\noindent {\bf The lifted equation:}  
We now consider the evolution equation in terms of these blowups, i.e., we lift the maps $\gamma^{(j)}$ to maps 
between $Q_h^{(j)}$ and $Z_h$.  This lifting is effected simply by writing 
\begin{equation}\label{motion}
\partial_t\gamma=\frac{\partial^2_x \gamma}{\vert\partial_x\gamma\vert^2}
\end{equation}
using the coordinate systems $(\tau,s)$ on $Q_h^{(j)}$ and $(\tau, w)$ on $Z_h$. 

Set $\gamma^{(j)} = \tau \eta^{(j)}$, which corresponds to the introduction of the projective coordinate on $Z_h$, and 
for simplicity, drop the superscript $(j)$ for the time being.  As noted earlier, if $\gamma$ is an arc in an expanding soliton, 
then $\eta$ depends only on $s$.  Recalling that $\del_t = \tau^{-2}( \tau\del_\tau - s\del_s)$ and $\del_x = \tau^{-1} \del_s$, hence 
\eqref{motion} becomes
\[
\frac{1}{\tau^2} (\tau \del_\tau - s\del_s) (\tau \eta) =  \frac{ \tau^{-2} \del_s^2 (\tau \eta)}{ | \tau^{-1} \del_s (\tau \eta)|^2},
\]
or finally,
\begin{equation}
(\tau \del_\tau + 1 - s\del_s) \eta = \frac{ \del_s^2 \eta}{|\del_s\eta|^2}.
\label{liftedeq1}
\end{equation}

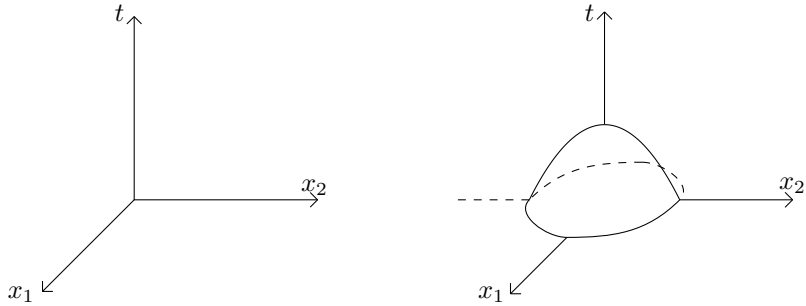
\begin{figure}[h]
\begin{center}
\begin{tikzpicture}[scale=1.5]
\draw[black, scale=0.65]
(0,0)--(-1.25,-1.25)
(0,0)--(0,2.5)
(0,0)--(2.5,0);
\draw[black, scale=0.65,shift={(-1.25,-1.25)},  rotate=135]
(0,0)to[out= -45,in=135, looseness=1] (0.1,-0.1)
(0,0)to[out= -135,in=45, looseness=1] (-0.1,-0.1);
\draw[black, scale=0.65, shift={(2.5,0)},  rotate=-90]
(0,0)to[out= -45,in=135, looseness=1] (0.1,-0.1)
(0,0)to[out= -135,in=45, looseness=1] (-0.1,-0.1);
\draw[black, scale=0.65, shift={(0,2.5)},  rotate=0]
(0,0)to[out= -45,in=135, looseness=1] (0.1,-0.1)
(0,0)to[out= -135,in=45, looseness=1] (-0.1,-0.1);
\path[font=\small]
(-1,-1)[above] node{$x_1$}
(1.6,-0.05)[above] node{$x_2$}
(-0.13,1.5)[above] node{$t$};
\end{tikzpicture}\qquad\qquad
\begin{tikzpicture}
\draw[black]
(-0.5,-0.5)to[out= 180,in=-135, looseness=1] (-1,0)
(-0.5,-0.5)to[out= 0,in=-135, looseness=1] (1,0);
\draw[dashed, black]
(-1,0)--(-2,0)
(0.5,0.5)to[out= 180,in=45, looseness=1] (-1,0)
(0.5,0.5)to[out= 0,in=45, looseness=1] (1,0);
\draw[black, shift={(-0.5,-0.5)}]
(0,0)--(-0.75,-0.75);
\draw[black]
(0,1)--(0,2.5)
(1,0)--(2.5,0);
\draw[black,  shift={(0,1)}]   plot[smooth,domain=-1:0] (\x, {\x^2});
\draw[black,  shift={(0,1)}]   plot[smooth,domain=0:1] (\x, {-\x^2});
\draw[black, shift={(-1.25,-1.25)}, scale=1, rotate=135]
(0,0)to[out= -45,in=135, looseness=1] (0.1,-0.1)
(0,0)to[out= -135,in=45, looseness=1] (-0.1,-0.1);
\draw[black, shift={(2.5,0)}, scale=1, rotate=-90]
(0,0)to[out= -45,in=135, looseness=1] (0.1,-0.1)
(0,0)to[out= -135,in=45, looseness=1] (-0.1,-0.1);
\draw[black, shift={(0,2.5)}, scale=1, rotate=0]
(0,0)to[out= -45,in=135, looseness=1] (0.1,-0.1)
(0,0)to[out= -135,in=45, looseness=1] (-0.1,-0.1);
\path[font=\small]
(-1.5,-1.5)[above] node{$x_1$}
(2.5,-0.05)[above] node{$x_2$}
(-0.2,2.25)[above] node{$t$};
\end{tikzpicture}
\end{center}
\caption{From $Z=\mathbb{R}^+_t\times\mathbb{R}^2_z$ to $Z_h$}
\label{fig2}
\end{figure}

In particular, if $\gamma$ is an expander, so $\eta = \eta(s)$, this yields the dimensionally reduced soliton equation
\begin{equation}
\frac{\del_s^2 \eta}{|\del_s\eta|^2}  + (s\del_s - 1) \eta = 0.
\label{stationary}
\end{equation}
Observe that if $\eta(s)$ solves this last equation, then so does $\eta(cs)$ for any constant $c > 0$.
We are therefore free to normalize by assuming that
\begin{equation}
|\del_s \eta| \to 1\ \ \mbox{as}\ \ s \to \infty.
\label{normalization}
\end{equation}

\medskip

\noindent {\bf Boundary conditions:} 
We now explain the various boundary conditions and how they should be formulated along each of the boundary faces of 
$Q_h \sqcup P_h$.   Clearly we must demand that the restriction of each $\gamma^{(j)}$, $j \leq n$, to the bottom
face of $Q^{(j)}_h$ is the given parametrization $\gamma^{(j)}_0$ of that arc of the initial network.    Next, fix
any $j$ and consider either the left or right face of the appropriate domain region $Q^{(j)}_h$ or $P^{(j)}_h$.   If the
left face of this domain does not correspond to an exterior vertex, then it is paired to exactly two other faces (either
left or right) of some other $Q^{(i)}_h$ or $P^{(i)}_h$, corresponding to the evolving trivalent vertex of the regular
network $\Gamma(t)$. Let us denote this `cluster' by $N(\ell)$, where $\ell$ is a vertex of $\Gamma(t)$, and assume that 
the matching boundaries are the left faces of each.  Then in terms of the $(\tau,s)$ coordinate system, the boundary conditions
at these faces become
\[
\eta^{(i)}(\tau,0) = \eta^{(j)}(\tau,0)\ \ \forall\ i,j \in N(\ell), \qquad \sum_{i \in N(\ell)} \frac{\del_s \eta^{(i)}(\tau,0)}{|\del_s \eta^{(i)}(\tau,0)|} = 0.
\]
Every left face corresponding to an interior vertex of $\Gamma(t)$ arises exactly once in one of these clusters. This specifies
the boundary conditions along all of the side faces.

Finally, it is perhaps not obvious how to specify an `initial' condition along any one of the front faces. To determine this,
note that we expect the lifted map $\eta^{(j)}(\tau,s)$ to be bounded as $\tau \to 0$. We make the assumption
that $\eta^{(j)}$ is in fact smooth up to the front face.  This means that it has a boundary value $\eta^{(j)}_0(s)$.
Noting that $\tau\del_\tau \eta^{(j)}|_{\tau=0} = 0$, we deduce that 
\[
\frac{\del_s^2 \eta_0}{|\del_s \eta_0|^2} + (s\del_s - 1) \eta_0 = 0,
\]
which is precisely the soliton equation!  In other words, solitons arise naturally as the initial conditions for the flow
along these new front faces.

As a word of extra motivation, notice that the lifted flow equation \eqref{liftedeq1} is characteristic along the front face.
Change variables, setting $\bar{\tau} = \log \tau$. Then \eqref{liftedeq1} becomes
\[
(\del_{\bar{\tau}} + 1 - s\del_s) \eta = \frac{\del^2_s \eta}{|\del_s\eta|^2},
\]
which appears to be nondegenerate, but now of course, $\bar{\tau} \to -\infty$ along $\ff$.  In order for there to be
a reasonable short-time solution for the original problem, it is necessary that this solution restrict to
an ancient solution near the front face. As is well-known in evolution problems, the choice for limiting `initial' data
as $\bar{\tau} \to -\infty$ must be extremely rigid in order for the flow to exist on this semi-infinite time interval. 

The final point is to recall that we have chosen a particular expanding self-similar solution $\Gamma^{\ssim, (i)}_j$ 
for each interior vertex $p_i$ of $\Gamma_0$. The various arcs of this soliton correspond precisely to the various 
front faces of the domains $Q^{(j)}_h$ and $P^{(j)}_h$ which have left faces limiting to $p_i$. The $\tau=1$ slice of the
soliton itself should be regarded as the geometric limit of these maps along the corresponding front face of $Z_h$ over $(0, p_i)$.

\section{A model: the scalar heat equation}\label{Section:model case}
As a further step toward explaining our main theorem, we now carry out in full detail the method 
we are espousing for the scalar heat equation on the half-line, where a number of simplifications occur.  We describe
briefly at the end of this section the minor changes that need to be made to prove the corresponding
result for higher dimensional domains. 
\begin{theorem}\label{scalarheateq}
Suppose that $u$ is the (unique moderate-growth) solution of the heat equation
$$
u_t-u_{xx}=0
$$
on $Q := [0,\infty)_t \times [0,\infty)_x$ with boundary conditions
\[
u(0,x) = \phi(x),\ \ u(t,0) = \psi(t),
\]
where both $\phi$ and $\psi$ are $\calC^\infty$ on the closed half-line. for
simplicity we assume that each is supported in $[0,1)$.  Then $u$ is smooth 
on the space $Q_h = [ [0,\infty)^2; (0,0); \{dt\}]$ obtained by parabolically blowing
up the quarter-space at the origin.  This solution is $\calC^\infty$ on the
closed quarter-space only when infinitely many `matching conditions' are satisfied.
\end{theorem}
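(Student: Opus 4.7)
The plan is to work on the blown-up space $Q_h$, construct a polyhomogeneous approximate solution at the front face $\ff$ by a recursion of ODEs, Borel sum to an actual smooth function on $Q_h$, and then solve away the remainder using classical heat kernel bounds. First, in projective coordinates $(\tau,s)=(\sqrt{t},x/\sqrt{t})$ valid near $\ff$ away from $\bff$, the heat equation lifts, after clearing the overall factor of $\tau^{-2}$, to the operator
\begin{equation*}
Lu := \tau\partial_\tau u - s\partial_s u - \partial_s^2 u,
\end{equation*}
which is characteristic at $\ff=\{\tau=0\}$: its restriction there is the ODE $-s\partial_s - \partial_s^2$ in $s\in[0,\infty)$, where $s=0$ is the lift of $\{x=0\}$ and $s\to\infty$ corresponds to the lift of $\{t=0\}$ toward the corner.

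I would seek a formal solution $\tilde u\sim\sum_{k\ge 0}\tau^k u_k(s)$. Linearity and the identity $\tau\partial_\tau(\tau^k u_k)=k\tau^k u_k$ decouple the orders, so each $u_k$ must satisfy the Hermite-type ODE
\begin{equation*}
u_k'' + s u_k' - k u_k = 0, \qquad s\in(0,\infty),
\end{equation*}
whose solution space is spanned by a polynomial of degree $k$ growing like $s^k$ and a second solution decaying like $e^{-s^2/2}$ at infinity. I would impose two boundary conditions: (i) at $s=0$, matching the Taylor expansion $\psi(\tau^2)=\sum_j\psi^{(j)}(0)\tau^{2j}/j!$ forces $u_{2j}(0)=\psi^{(j)}(0)/j!$ and $u_{2j+1}(0)=0$; (ii) as $s\to\infty$, requiring $u_k(s)\sim\phi^{(k)}(0) s^k/k!$ matches the Taylor expansion of $\phi$ at $x=0$ when the series is re-expressed in the coordinates $(y,T)=(x,t/x^2)$ valid near $\bff$ (the substitution $\tau=y\sqrt{T}$, $s=1/\sqrt{T}$ shows that the polynomial tail reassembles into $\sum \phi^{(k)}(0)y^k/k!=\phi(y)$). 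These two conditions uniquely determine $u_k$: the polynomial solution is fixed by the asymptotic datum, the exponentially small solution by the boundary value at zero.

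Next, I would Borel sum the series: choose a sequence of cutoffs $\chi_k(\tau)$ so that $\tilde u(\tau,s):=\sum_k\chi_k(\tau)\tau^k u_k(s)$ converges in $C^\infty(Q_h)$ with asymptotic expansion the formal series. This uses uniform-in-$k$ estimates on $u_k$ and its $s$-derivatives, which are clean because each $u_k$ is a Hermite polynomial plus an $e^{-s^2/2}$ correction. By construction $L\tilde u = O(\tau^\infty)$ at $\ff$, $\tilde u|_{x=0}=\psi(t)+O(\tau^\infty)$, and $\tilde u|_{t=0}=\phi(x)+O(\tau^\infty)$. The difference $w=u-\tilde u$ solves a Cauchy--Dirichlet problem for $\partial_t-\partial_x^2$ with forcing and data all vanishing to infinite order at the corner; extending by zero and applying the explicit half-line heat kernel yields $w\in C^\infty(Q_h)$ vanishing to infinite order along $\ff$. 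Hence $u=\tilde u+w\in C^\infty(Q_h)$.

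The main obstacle is arranging the asymptotic analysis of the recursion uniformly in $k$ so that Borel summation goes through and so that the formal solution is simultaneously smooth across $\lf$, $\bff$, and $\ff$; the decay of the non-polynomial solution like $e^{-s^2/2}$ is the key quantitative input that makes the two coordinate charts on $Q_h$ agree to all orders at the corner $\ff\cap\bff$. For the final sentence of the theorem, the converse is classical: smoothness of $u$ on the unblown quarter-space $Q$ forces the two mixed Taylor expansions at $(0,0)$ to agree, and iterating $\partial_t u=\partial_x^2 u$ gives $\partial_t^j u(0,0)=\partial_x^{2j} u(0,0)$, so $\psi^{(j)}(0)=\phi^{(2j)}(0)$ for every $j\ge 0$, which is precisely the infinite sequence of matching conditions.
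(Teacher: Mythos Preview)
Your proposal is correct and follows essentially the same route as the paper: lift the heat operator to $(\tau,s)$ coordinates, expand formally in $\tau$ to obtain the sequence of ODEs $u_k''+su_k'-ku_k=0$, identify the polynomial and exponentially decaying solutions, fix the two free constants per level from $\psi$ at $s=0$ and the leading coefficient of $\phi$ as $s\to\infty$, Borel sum, and solve away the rapidly vanishing remainder with the Dirichlet heat kernel on $\RR^+$. The only differences are cosmetic --- the paper derives the polynomial structure via the intertwining identity $\partial_s L_j = L_{j-1}\partial_s$ rather than invoking Hermite-type solutions directly, and arranges the Borel step so that $v$ attains $\phi$ and $\psi$ exactly on $\bff$ and $\lf$ (rather than up to $O(\tau^\infty)$), which makes the correction $w$ have strictly vanishing Cauchy--Dirichlet data; your variant with $O(\tau^\infty)$ data works equally well since all compatibility conditions at the corner are then trivially satisfied.
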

The last assertion is classical, but the fact that $u$ is always ``smooth''
on the blown up space if the matching conditions are not satisfied  is the novel part of this.  

The fact that the spatial domain is $\RR^+$ rather than a closed interval $[0,1]$ is immaterial, and the 
proof below could be extended to that case with only minimal change of notation. It is 
easy to adapt the proof below to the higher dimensional case, i.e., to solutions of the scalar heat
equation on $\RR^+_t \times \Omega_x$, where $\Omega$ is a domain in $\RR^n$ with smooth boundary. 
We comment further on this extension at the end of this section.

\medskip

\noindent \textit{Proof of Theorem~\ref{scalarheateq}}
There are two steps. In the first we show that the Taylor series of the boundary data at $x=0$ and $t=0$ respectively 
determine all terms in the expansion of $u$ on $Q_h$. In other words, we produce a smooth function 
$v$ on $Q_h$  such that $(\del_t - \del_x^2) v = f$ vanishes to all orders as $t \searrow 0$ in $\del Q_h$, and is supported
in $\{(t,x): 0 \leq t, x \leq 2\}$.   In the second step we use the heat kernel to produce a function $w$ which also 
vanishes to all orders at the boundary of $Q$ and which solves $(\del_t - \del_x^2) w = f$ with $w(t,0) = w(0,x) = 0$. 
The solution $u = v - w$ then has the desired properties.   There is one non-local aspect in this procedure which
only becomes apparent once we pass to the blowup. 

\medskip

\noindent {\bf Construction of the approximate solution $v$:}  We now construct an approximate solution $v$ to the 
homogeneous linear heat equation. For simplicity, assume that $v(0,x) = \phi(x) \in \calC^\infty(\RR^+)$
and $v(t,0) = 0$.  We comment on the more general case where $v(t,0) = \psi(t) \in \calC^\infty(\RR^+)$ later.  

We shall {\it assume} that the lift of $v$ to $Q_h$ is smooth, and show how the heat equation and boundary conditions 
determine its Taylor series at each boundary.  

First observe that the equation $(\del_t - \del_x^2)v \equiv 0$ (in Taylor series as $t \to 0$) determines the full
Taylor series of $v$ along $t=0$ for $x > 0$ in terms of the intial data $\phi$ and its derivatives. 

Now consider this equation near the front face $\ff$ of $Q_h$.  Using the $(\tau,s)$ coordinates, write 
\[
v \sim \sum_{j=0}^\infty v_j(s)\tau^j.
\]
Each $v_j(s)$ is a function on $\ff$. 
The boundary condition $v(t,0)  = 0$ forces each $v_j(0) = 0$.   Now substitute this expansion into \eqref{heatcoordstau}, and cancel
the extra factor of $\tau^{-2}$, to get 
\[ 
(\tau \del_\tau - s\del_s - \del_s^2) v \sim  \sum_{j=0}^\infty ( j v_j - s v_j' - v_j'') \tau^j \equiv 0,
\]
where the prime denotes a derivative with respect to $s$.  We conclude that 
\begin{equation}
 L_j v_j := v_j''(s) + s v_j'(s) - j v_j(s) = 0, \ \ j = 0, 1, 2, \ldots,
\label{eqvj}
\end{equation}
We bring in the boundary condition at $t=0$, i.e., as $s \to \infty$, momentarily.

The equation $L_0 v_0 = 0$ is explicitly solvable. Indeed, setting $z = v_0'$, then the equation becomes
$z' =- s z$, which means $z(s) = c e^{-s^2/2}$, and hence 
\[
v_0(s)  = c \int_0^s e^{-\sigma^2/2} \, d\sigma
\]
for some $c \in \RR$.  Observe that $v_0(s)$ converges to a limit $\alpha_0 > 0$ as $s \to \infty$. In fact, 
$v_0(s) = \alpha_0 + \calO(R(s) e^{-s^2/2})$ for some polynomial $R(s)$.   We do not need to keep careful track of
these exponential remainders, so in all the following, we write simply $v_0(s) = \alpha_0 + E$ where $E = \calO(e^{-\delta s^2})$ 
for some $\delta > 0$. We use similar expressions below when this leading part is more complicated than just a constant. 

A fortuitous identity allows us to solve the other equations in this sequence for $j > 0$. To do this,  observe that differentiating 
\eqref{eqvj} with respect to $s$ gives
\[
(L_j v_j)' = L_{j-1} v_j' = (v_j')'' + s (v_j')' - (j-1) v_j' = 0.
\]
Iterating $j-1$ times shows that $L_0 v_j^{(j)} = 0$.    As shown above, this yields that $v_j^{(j)} = \mbox{const.} + E$, 
and hence 
\begin{equation*}
v_j(s) = P_j(s) + E
\end{equation*}
where $P_j$ is a polynomial of degree $j$ in $s$. Writing this out explicitly, we have
\begin{equation}
v_j(s)=c_{j,j} s^j+c_{j, j-1}s^{j-1}+\ldots+c_{j, 1} s+c_{j, 0}+ E.
\label{polyj} 
\end{equation}

We now use the heat equation to determine recursion relationships between these coefficients.  First recall that $c_{0,0} \neq 0$ unless
$v_0 \equiv 0$, which we assume is not the case.  Inserting \eqref{polyj} into \eqref{eqvj}, and reindexing, we obtain
\[
- c_{j, j-1}s^{j-1}+\sum_{\ell=0}^{j-2}  \big( (\ell+2)(\ell+1) \, c_{j, \ell +2 }  + (\ell-j) \, c_{j, \ell} \big)\,  s^\ell = 0,
\]
hence
\begin{equation}
(\ell+2)(\ell+1) c_{ j, \ell+2} + (\ell-j) c_{j, \ell}  = 0, \ \ \ell = 0, \ldots, j. 
\label{recc}
\end{equation}
Since $c_{j,\ell} = 0$ for $\ell > j$, these recursion relationships show that $c_{j,j}$ may be chosen freely, while $c_{j, j-1} = 0$.
They also recursively determine each $c_{j, j-2k}$ as a nonzero multiple of $c_{j,j}$, and show that each $c_{j, j-2k-1} = 0$ 
since it is a multiple of $c_{j, j-1}$. 

The only point to check is that each $c_{j,j} \neq 0$, or more accurately, that $c_{j,j} = 0$ implies that $v_j$ vanishes identically. 
To see this, first note that $c_{j,j} = 0$ implies that all $c_{j,\ell} = 0$, so that $v_j$ decays exponentially. In addition, $v_j(0) = 0$.
Now observe that
\[
\int_0^\infty - (L_j v_j) v_j \, ds = \int_0^\infty  (v_j')^2 + (j + \frac12) (v_j)^2 \, ds  = 0
\]
since 
\[
\int_0^\infty  s v_j v_j' \, ds = \frac12 \int_0^\infty  s (v_j^2)'\, ds = - \frac12 \int_0^\infty (v_j)^2\, ds,
\]
which would show that $v_j \equiv 0$ as claimed. 

To determine $v_j$ completely, it suffices to determine the coefficient $c_{j,j}$, and this is where the 
initial condition at $t=0$ is required.   It is now convenient to switch to the other coordinate
system $(T,y)$, since this is nonsingular near the corner $\ff \cap \bff$ of $Q_h$.   

Using that $T=\frac{1}{2s^2}$, $y=s\tau$, or equivalently $s =(2T)^{-1/2}$, $\tau = y (2T)^{1/2}$, we see that 
$s^\ell \tau^j=y^j(2T)^{\frac{\ell-j}{2}}$. Altogether, modulo the exponentially decreasing error term,
\begin{equation}
v \sim \sum_{j=0}^\infty\sum_{\ell = 0}^j c_{j ,\ell}s^\ell\tau^j=\sum_{j=0}^\infty \sum_{\ell=0}^j 
2^{(\frac{j-\ell}{2})}c_{j,\ell}y^jT^{(\frac{j-\ell}{2})},
\label{expvnc}
\end{equation}
where the $c_{j,\ell}$ are the same coefficients as were discussed above.  Now suppose that
\[
v \sim \sum_{j, p = 0}^\infty A_{jp } y^j T^p, \ \ \mbox{as}\ y, T \to 0.
\]
Inserted into \eqref{ho2}, this leads to the recursion relation
\[
(p+1) A_{j, p+1} = \big[ j(j-1) + 2p (2p + 1 - 2j) \big] A_{jp},
\]
or more simply
\begin{equation}
A_{j, p+1}= \frac{1}{p+1} (j-2p)(j-2p-1) A_{j p}.
\label{recA}
\end{equation}

To compare with \eqref{expvnc}, set $p = (j-\ell)/2$, i.e., $\ell = j - 2p$, so 
\begin{equation}
2^p c_{j, j-2p} = A_{jp}.
\label{c=A}
\end{equation}
For this to be consistent, it is necessary that $A_{jp} = 0$ for $p > j/2$, but this is immediate from \eqref{recA}.  
Comparing the recursion formulas \eqref{recc} and \eqref{recA} more carefully, one can check that they are actually
the same, so that \eqref{c=A} is true for all indices. Furthermore, the coefficient
\[
c_{j,j} = A_{j0} = \left. \del_y^j v(T,y) \right|_{y=T=0}
\]
is nothing other than the $j^{\mathrm{th}}$ derivative of $\phi(x)$ at $x=0$.   This shows that the initial condition
$u_t|_{t=0} = \phi$ determines all of the $c_{j,j}$, and hence the entire set of coefficients $v_j(s)$, $j \geq 0$. 

We have now shown that the initial data $\phi$ and the heat equation itself determine the full set of Taylor coefficients of the solution 
near the front and bottom faces of $Q_h$. 

To determine the Taylor series of $v$ at $x=0$ for $t > 0$, note that the heat equation shows that all even derivatives
$\del_x^{2j} v(t,0)$ must vanish since we have specified that $v(t,0) \equiv 0$.  The odd Taylor coefficients 
$\del_x^{2j+1} v(t,0)$ are all determined by $\del_x v(t,0)$.  We can choose this normal derivative to be any smooth function, 
but for simplicity, let us assume that $\del_x v(t,0)$ vanishes identically.  This shows that $v$ vanishes 
to infinite order as $x \to 0$.   It is perhaps slightly more reassuring to repeat this calculation in the $(\tau,s)$ coordinates
in order to be sure that it really can be done uniformly up to the front face, but this is indeed the case.  

When the boundary condition $u(t,0) = \psi(t)$ is smooth, but not identically  zero, we see that all even Taylor coefficients 
of $v$ are determined recursively in terms of derivatives of $\psi$; as before, assuming that $\del_x v(t,0) = 0$, all
the odd coefficients vanish. In this case where $\psi(t) \neq 0$, we must also alter the prior step in determining the
solutions to $L_j v_j = 0$ along the front face, since now we must prescribe the initial condition $v_j(0)$ to 
equal the recursively determined value of $(1/j!)\del_s^j v(0,0)$. 

Having determined all of these Taylor coefficients at all boundaries of $Q_h$, we now invoke Borel's Lemma.
This classical result states that given any prescribed functions on the boundary components of a manifold with corners, there
exists a smooth function for which these functions are precisely the coefficients of the Taylor series with respect to
some fixed choice of boundary defining function at that face. We apply this to construct a smooth function $v$ on $Q_h$, the 
Taylor coefficients of which at all boundaries of $Q_h$ are the quantities we have determined above.  By construction, 
$(\del_t - \del_x^2) v = f$ vanishes to infinite order as $t \searrow 0$ in $Q_h$. 

\medskip

\noindent {\bf Construction of the correction term $w$:} The final step is to define the function
\[
w(t,x) = \int_0^\infty \int_0^\infty  H_D( t - \tilde{t}, x, \tilde{x}) f(\tilde{t}, \tilde{x})\, d\tilde{t} d\tilde{x},
\]
where $H_D(t, x, \tilde{x})$ is the Dirichlet heat kernel on $\RR^+$, i.e., the solution operator for 
$(\del_t - \Delta) u = 0$, $u(0,x) = \phi(x)$ and $u(t,0) = 0$. This heat kernel is known explicitly, of course,
by the method of reflections.   Using the rapid vanishing of $f$, it is easy to see that $w(t,x)$ vanishes to all 
orders as $t \to 0$, uniformly for $x \geq 0$.  In particular, the lift of $w$ to $Q_h$ vanishes to
all orders on the front face. In addition, $w(t,0) = 0$ since $H_D(t-\tilde{t}, 0, \tilde{x})$ vanishes.  On the other hand, 
$\del_x H_D(t, 0, \tilde{x})$ is a smooth nonvanishing function, as are each of the odd derivatives $\del_x^{2j+1} 
H_D(t, 0, \tilde{x})$.  Hence $\del_x^{2j+1} w$ is a smooth function on the left face; as remarked earlier,
all of these functions vanish rapidly as $t \searrow 0$.  This proves that $w(t,x)$ is $\calC^\infty$ 
on $[0,\infty)_t \times [0,\infty)_x$ and vanishes to all orders at $t = 0$, and all its even Taylor
coefficients at $x=0$ also vanish. Lifted to $Q_h$, it vanishes rapidly on the front and bottom faces,
and all even terms in its series vanish at the left face. 

The true solution $u(t,x) = v(t,x) - w(t,x)$ to the heat equation with specified initial and boundary conditions is
now smooth on $Q_h$ as claimed.   Its series expansion at the front face of $Q_h$ is determined only `semilocally'
on this space, since the coefficients in the expansion here are solutions to the induced ordinary differential
equations on that face. Note that the arbitrary choice of $\del_x v(t,0)$ is compensated by the nonvanishing
of $\del_x w(t,0)$. As expected, the normal derivative in $x$ of the final solution $u$ is determined globally
by $\phi$ and $\psi$. 
\qed 

\medskip

We conclude this section by remarking on the higher dimensional case.  Suppose that $\Omega \subset \RR^n$ is a 
smoothly bounded region, or more generally, a smooth manifold with boundary (not necessarily compact).  We assume
that there is a well-behaved Dirichlet heat kernel $H_D$ for $\Omega$, which is the case if $\Omega$ is compact
or has boundary having reasonable asymptotics at infinity.  Consider the problem 
\[
(\del_t - \Delta_\Omega) u = 0,\ \ u(0,z) = \phi(z) \in \calC^\infty(\overline{\Omega}),\ \ u(t, y) = \psi(t,y) \in 
\calC^\infty([0,\infty) \times \del \Omega),
\]
where $y \in \del \Omega$.  An almost identical result to the one proved above remains true. Namely, define
$Q = [0,\infty) \times \overline{\Omega}$ and $Q_h$ the parabolic blowup of $Q$ at $\{0\} \times \del \Omega$.  
\begin{theorem}
Suppose that the boundary data $\phi$ and $\psi$ are smooth up to the boundaries of
their respective domains of definition. Then the unique (moderate growth) solution $u$ to the
mixed initial/Dirichlet problem with this prescribed boundary data is smooth on $Q_h$. 
\end{theorem}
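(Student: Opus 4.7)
The plan is to extend the two-step strategy of Theorem~\ref{scalarheateq} to this higher-dimensional setting. Away from $\del\Omega \times \{0\}$ the standard parabolic semigroup on $\Omega$ produces a smooth solution, so the only genuine work is in a tubular neighborhood of $\del\Omega \times \{0\}$ inside $Q_h$. Inside such a neighborhood, fix coordinates $(x,y)$ with $x \geq 0$ a boundary defining function and $y \in \del\Omega$ tangential; the Laplacian then takes the form
\[
\Delta_\Omega = \del_x^2 + a(x,y)\del_x + \Delta_y + E(x,y;\del_x,\del_y),
\]
where $\Delta_y$ is the tangential Laplacian on $\del\Omega$ (at $x=0$) and $E$ is a second-order operator with smooth coefficients vanishing at $x=0$. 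In the projective coordinates $\tau = \sqrt{2t}$, $s = x/\sqrt{2t}$, $y$, valid on $Q_h$ away from $\bff$, the heat operator becomes, after multiplication by $\tau^2$,
\[
\big(\tau\del_\tau - s\del_s - \del_s^2\big) - \tau^2 \Delta_y + R(\tau,s,y;\del_\tau,\del_s,\del_y),
\]
with $R$ of strictly lower order in the natural $\tau$-grading.

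To construct the approximate solution, I write $v \sim \sum_{j=0}^\infty v_j(s,y)\tau^j$ and match coefficients, obtaining a family of ODEs in $s$ with $y$ as a smooth parameter,
\[
L_j v_j := \del_s^2 v_j + s\del_s v_j - j v_j = F_j(s,y),
\]
with $F_0 = F_1 = 0$ and, for $j \geq 2$, $F_j$ a smooth combination of $y$-derivatives of the previously determined $v_{j-2}, v_{j-3}, \ldots$ The boundary data at $s=0$ come from $\psi$: $v(t,0,y) = \psi(t,y)$ forces $v_{2k}(0,y) = 2^{-k}(k!)^{-1}\del_t^k \psi(0,y)$, while the odd-$j$ values $v_{2k+1}(0,y)$ are free (I set them to zero, as in the scalar proof). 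Matching at the corner $\ff\cap\bff$, carried out in the alternate projective coordinates $(t/x^2, x, y)$ exactly as in Theorem~\ref{scalarheateq}, forces the leading coefficient of the polynomial part of $v_j(s,y)$ as $s \to \infty$ to equal $\tfrac{1}{j!}\del_x^j \phi(0,y)$. For each fixed $y$ the ODE analysis is identical to the one in the one-dimensional proof, so $v_j(\cdot, y)$ is uniquely a polynomial of degree at most $j$ in $s$ plus an exponentially small remainder of type $e^{-\delta s^2}$; smooth dependence of $\phi, \psi$ on $y$ transfers to smooth dependence of each $v_j$ on $y$.

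By Borel's Lemma on the manifold with corners $Q_h$, I assemble $v \in \calC^\infty(Q_h)$ whose Taylor series at $\ff$, $\bff$, and $\del\Omega \times [0,\infty)$ reproduce the coefficients determined above. By construction $f := (\del_t - \Delta_\Omega)v$ vanishes to infinite order at every boundary face of $Q_h$. I then set
\[
w(t,z) := \int_0^t\!\!\int_\Omega H_D(t - \tilde t, z, \tilde z)\, f(\tilde t, \tilde z)\, d\tilde z\, d\tilde t,
\]
with $H_D$ the Dirichlet heat kernel of $\Omega$. The rapid vanishing of $f$, combined with standard boundary estimates on $H_D$ and its derivatives, implies that $w$ is smooth on $\overline{Q}$, vanishes to all orders as $t \searrow 0$, and satisfies $w(t,y) = 0$ for $y \in \del\Omega$. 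Hence $u = v - w$ is the unique moderate-growth solution with the prescribed data and is $\calC^\infty$ on $Q_h$.

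The main obstacle is the bookkeeping required to verify that each forcing $F_j(s,y)$ remains in the functional class of polynomials in $s$ of appropriate degree modulo a Gaussian-type remainder $e^{-\delta s^2}$, with smooth dependence on $y$. This preservation follows because the tangential Laplacian $\Delta_y$ and the lower-order terms in $R$ involve only $y$-derivatives and multiplication by polynomials in $s$, which do not disturb the polynomial-plus-Gaussian Ansatz. Once this is confirmed, the rest of the argument is a direct translation of the scalar proof.
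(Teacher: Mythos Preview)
Your proposal is correct and follows essentially the same approach as the paper, which simply asserts that the proof is ``essentially identical'' to the one-dimensional case with $y$ appearing as a parameter in the front-face ODEs and with the recursion now involving $y$-derivatives of earlier coefficients. One small slip: the values $v_{2k+1}(0,y)$ are not free but are forced to vanish by the Dirichlet condition $v(\tau,0,y) = \psi(\tau^2/2,y)$, since $\psi$ is smooth in $t$ rather than $\tau$; this does not affect your argument, as you set them to zero anyway.
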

The proof is essentially identical to the one above. The extra $y$-dependence is straightforward to include
in the recursion formulas above, and the sequence of equations that must be solved on the front face
is still a sequence of ordinary differential equations in which $y$ appears as a parameter. The recursion relationships
now involve derivatives in $y$ of the previously determined coefficients.

\section{Approximate solutions for the nonlinear flow}
We now begin the proof of our short-term existence result for the network flow.   As earlier in the paper, 
assume that the initial network $\Gamma_0$ comprises $n$ parametrized arcs $\gamma^{(j)}_0(x)$, $j = 1, \ldots, n$,
meeting pairwise nontangentially at the origin. Following the logic and strategy explained in the previous section for the linear 
heat equation, we first construct an approximate solution to the problem, i.e., a collection of families of parametrized 
arcs $\widehat{\gamma}^{(j)}(t,x)$, $j = 1, \ldots, m$, or more properly the lifts of these curves to the blown up spaces,
which satisfy
\begin{equation}
(\tau \del_\tau + 1 - s \del_s) \widehat{\gamma}^{(j)} = \frac{ \del_s^2 \widehat{\gamma}^{(j)}}{|\del_s \widehat{\gamma}^{(j)}|^2} + f,
\label{motion2}
\end{equation}
along with the appropriate initial conditions on the bottom and front faces and matching conditions along the
left faces. Here $f$ is smooth on $Q_h$ or $P_h$  and vanishes to all orders at every boundary component.  This step will 
be similar in spirit, but unfortunately rather more intricate, than in the linear case.

In the next sections we take up the problem of correcting this approximate solution to an exact solution. To do 
that, we will need a detour to explain the structure of the heat kernel for networks. 
For simplicity, drop both the $\ \hat{} \ $ and the superscript $(j)$ from the notation and consider the lift of a single 
curve as a map from $Q_h$ to $Z_h$. (We focus for brevity on these outer arcs, rather than the inner ones which have domain
$P_h$.)   As in \S 3, write $\gamma(t,x)$ in terms of the $(\tau,s)$ and $(\tau,w)$ coordinate systems on $Q_h$ and $Z_h$,
respectively.  Thus $\gamma = \tau \eta(\tau,s)$.  We make the ansatz that $\eta$ is smooth,
\[
\eta \sim \sum_{j=0}^\infty \tau^j \eta_j(s)
\]
near $\ff$.  Inserting this series in \eqref{motion2} and collecting like powers of $\tau$ gives the sequence of equations
\begin{equation}
\left[ \frac{\del^2_s \eta}{|\del_s \eta|^2}\right]_j +  s\del_s \eta_j - (j+1) \eta_j = 0,\ \ j = 0, 1, 2, \ldots
\label{atj}
\end{equation}
The initial expression on the left, $[ \ldots ]_j$, denotes the coefficient of $\tau^j$ in the series expansion of the
term inside the brackets.  

When $j = 0$, \eqref{atj} reduces to the soliton equation \eqref{stationary}; hence, as already explained in \S 3, once we have
chosen a particular expanding soliton, then the arcs of this soliton appear as the leading asymptotic terms of each  arc of
this approximate solution at $\ff$.   

The study of \eqref{atj} for $j \geq 1$ is conducted in a series of steps.   In the first, we study the sequence of operators
\[
\calL_j \eta_j  = \del_s^2 \eta_j + (s\del_s - (j+1)) \eta_j \, . 
\]
These operators appear as the principal parts of \eqref{atj}, and their precise role in this analysis will be explained below. 

In the following we write $A \sim B$ if $A-B$ decays exponentially as $s \to \infty$. (In fact, the `error terms' 
encountered below all decay like $e^{-cs^2}$ for some $c > 0$.)   We also say that a function $u$ 
is {\sl of degree $\ell$} if $u \sim P$ where $P$ is a polynomial of degree $\ell$. All functions below are assumed
to be smooth. 

\begin{lemma}
The space of solutions to $\calL_j u = 0$ on $\RR^+$ is spanned by two functions, $U_j, V_j$, where $U_j$ is of degree $j+1$ and $V_j \sim 0$. 
If $R$ has degree $j-1$, and $a$ is any constant, then there exists a unique solution $u$ to $\calL_j u = R$ with $u$ of degree
$j+1$ and $u(0) = a$. 

The analogous statement is true for any operator of the form $\calL_j + E_1 \del_s^2 + E_2 \del_s + E_3$, where each
$E_j \sim 0$. 
\end{lemma}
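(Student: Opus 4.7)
The operator $\calL_j u = u'' + s u' - (j+1) u$ is a second order linear ODE with smooth coefficients on $\RR^+$, so standard existence theory gives a two-dimensional space of smooth solutions. I would first produce the polynomial solution $U_j$ by the power-series ansatz $u = \sum_k c_k s^k$: substitution into $\calL_j u = 0$ yields the two-step recursion
\[
(\ell+2)(\ell+1)\, c_{\ell+2} = (j+1-\ell)\, c_\ell,
\]
and the factor $j+1-\ell$ vanishes at $\ell = j+1$, terminating the parity chain containing $j+1$ and producing a polynomial of degree exactly $j+1$. The other parity chain fails to terminate; a comparison with the Taylor series of $e^{s^2/2}$ shows that it generates a solution growing like $e^{s^2/2}$ at infinity, which therefore lies outside the class of polynomially-asymptotic solutions we are after.

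For the second basis element $V_j$ I would apply reduction of order. Abel's formula forces any Wronskian of two solutions to satisfy $W' = -sW$, hence $W = C e^{-s^2/2}$, and setting
\[
V_j(s) = U_j(s) \int_s^\infty \frac{e^{-t^2/2}}{U_j(t)^2}\, dt
\]
(modified near $s=0$ if $U_j(0)=0$, using smoothness of ODE solutions) yields a linearly independent solution. A Laplace-type expansion of the tail integral gives $V_j(s) = \cO(s^{-j-2} e^{-s^2/2})$, so $V_j \sim 0$. For the inhomogeneous equation $\calL_j u = R$ with $R$ of degree $j-1$, the same ansatz leads to
\[
(\ell+2)(\ell+1)\, c_{\ell+2} + (\ell - j - 1)\, c_\ell = r_\ell.
\]
Restricting to polynomials of degree at most $j+1$, the $\ell = j$ equation forces $c_j = 0$; the chain of parity opposite to $j+1$ is then uniquely determined by the $r_\ell$, while the chain of matching parity starts from a free parameter $c_{j+1}$. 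This yields a polynomial particular solution $u_p$ of degree $j+1$, unique modulo $U_j$, and the general polynomial-growth solution is $u = u_p + \alpha U_j + \beta V_j$, with the degree-$j+1$ normalization pinning down $\alpha$ and the boundary condition $u(0) = a$ pinning down $\beta$ via the nonvanishing of $V_j(0)$.

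For the perturbed operator $\calL_j + E_1 \del_s^2 + E_2 \del_s + E_3$ with each $E_k \sim 0$, I would run a fixed-point iteration: write $u = u_0 + \widetilde u$ with $u_0$ the unperturbed solution and iteratively solve
\[
\calL_j \widetilde u_{n+1} = -\bigl(E_1 u_n'' + E_2 u_n' + E_3 u_n\bigr)
\]
using the inverse constructed above. The exponential decay of the $E_k$ forces the source terms to live in the decaying tail space regardless of the polynomial growth of $u_n$, so the iteration contracts in a weighted norm that separates polynomial growth from exponential decay at infinity, and the perturbed solution has the same leading polynomial behavior plus an exponentially small correction.

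The main obstacle is verifying that $V_j(0) \neq 0$, which is essential for solving $u(0) = a$ by adjusting the decaying component alone; this follows from the Wronskian normalization paired with the explicit structure of $U_j$ at the origin, but requires a small case analysis depending on the parity of $j$, since $U_j$ has either a nonzero constant term or a simple zero at $0$. A secondary technical point is setting up the weighted function space in the perturbation argument so that $\calL_j^{-1}$ is bounded on it and the rapid decay of the $E_k$ translates into genuine smallness of the iteration map; once the weight is chosen to track both the polynomial and the exponentially decaying pieces, contraction is automatic.
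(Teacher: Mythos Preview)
Your proof is correct in outline but takes a different route than the paper, and contains one incorrect (though non--load-bearing) claim.

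The paper does not construct $U_j$ by a power-series recursion. Instead it exploits the intertwining identity
\[
\partial_s \calL_j = \calL_{j-1}\,\partial_s,
\]
iterated $j+1$ times to reduce $\calL_j u = 0$ to $\calL_{-1} u^{(j+1)} = 0$, which is solved explicitly as $u^{(j+1)} = A + B\int_s^\infty e^{-\sigma^2/2}\,d\sigma$. This immediately shows that \emph{every} solution has degree at most $j+1$, and yields $U_j$ (take $A\ne 0$) and $V_j$ (take $A=0$) in one stroke. Your approach via the terminating power-series chain for $U_j$ and reduction of order for $V_j$ is more elementary and equally valid; the paper's trick is slicker and gives the global growth bound for free.

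Your remark that the non-terminating parity chain ``generates a solution growing like $e^{s^2/2}$'' is wrong: the intertwining argument shows all solutions grow at most polynomially of degree $j+1$. (You can check this directly: for large $\ell$ the recursion gives $c_{\ell+2}/c_\ell \sim -1/\ell$, so the signs alternate and there is massive cancellation.) That non-terminating series is in fact a specific linear combination of $U_j$ and $V_j$, not a third exponentially-growing object. Fortunately your proof never uses this claim --- you construct $V_j$ independently by reduction of order --- so the error is harmless.

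For the inhomogeneous problem the paper proceeds essentially as you do (undetermined coefficients for the polynomial part, then an explicit double-integral formula for the exponentially decaying remainder), and for the perturbed operator it simply invokes ``standard perturbation arguments'' where you spell out a contraction iteration. Your treatment is more explicit there. One small point: both you and the paper are slightly loose on uniqueness, since ``of degree $j+1$'' alone does not pin down the $U_j$-coefficient; the intended reading (made explicit in the paper's subsequent Proposition) is that the leading coefficient of $s^{j+1}$ is prescribed.
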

\begin{proof}
Observe first that $\calL_{-1} u = u'' + s u' = 0$ has the explicit solutions $1$ and $G(s) := -\int_s^\infty e^{-\sigma^2/2}\, d\sigma$,
so a general solution has the form $A + B G(s)$ for some constants $A, B$. 

Next, applying the intertwining formula 
\[
\del_s \calL_j = \calL_{j-1} \del_s
\] 
$(j+1)$ times to $\calL_j u = 0$ gives $\calL_{-1} u^{(j+1)} = 0$,
so $u^{(j+1)} = A + B G(s)$, and hence $u$ has degree $j+1$ as claimed. Note that if $U_j \not \equiv 0$, then its leading coefficient 
must be nonzero.

To obtain a solution of the inhomogeneous equation $\calL_j u = R$, we can use the method of undetermined coefficients
to solve away the polynomial terms of $R$, and reduce to the case where $R \sim 0$.  Assuming this has been done
for the moment, then the explicit formula 
\[
w = U(s) \int_s^\infty U(s')^{-2} \int_{s'}^\infty U(s'') R(s'') \, ds'' ds'
\]
produces a solution $w \sim 0$.

Now suppose that $u(s) \sim a_{j+1} s^{j+1} + a_j s^j + \ldots + a_0 s^0$ and write $R \sim b_{j-1} s^{j-1} + \ldots + b_0 s^0$.  Then
\[
\calL_j u \sim  \sum_{i=0}^{j-1}  a_{i+2} (i+2)(i+1) s^i + \sum_{i=0}^{j+1}  ( i - (j+1)) a_i  s^i
\sim  \sum_{i=0}^{j-1}  b_i s^i,
\]
hence
\[
 (j+1 - (j+1)) a_{j+1} = 0,\ \  (j - (j+1)) a_j = 0,\ \ (i+2)(i+1) a_{i+2} + (i - j - 1) a_i = b_i, \ \ i = 0, \ldots, j-1.
\]
The first equation leaves $a_{j+1}$ free, and the second equation gives that $a_j = 0$. The remaining equations show
that each $a_i$, $i \leq j-1$, is determined by $b_i$ and $a_{j+1}, \ldots, a_{i+1}$.  Notice that if we had assumed
that the polynomial part of this solution has leading term different than $s^{j+1}$, there would have been no cancellation
and this ansatz would not have worked. 

To attain the correct boundary value, simply add on the appropriate homogeneous solution, which by the above has degree 
at most $j+1$. 




Finally, $(1 + E_1)u'' + s (1+ E_2) u'  + (-(j+1) + E_3 )u = R$ as $\calL_j u = f$
where $f = R -E_1 u'' - E_2 s u' - E_3 u$.  The same explicit integral formula, and standard perturbation arguments,
gives the existence of solutions with the specified asymptotics. 
\end{proof}

The relevance of the operators $\calL_j$ is explained by the following
\begin{lemma}
For $j \geq 1$, equation \eqref{atj} can be written in the form
\[
\calL_j \eta_j +  E_1 \del^2_s \eta_j  + E_2 \del_s \eta_j + Q_j(\eta_0, \ldots, \eta_{j-1}) = 0, 
\]
where $Q_j$ is a polynomial in $\del_s \eta_i$ and $\del_s^2 \eta_i$, $0 \leq i \leq j-1$ and each $E_j \sim 0$
depends only on $\eta_0$.
\end{lemma}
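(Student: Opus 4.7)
The plan is to interpret \eqref{atj} as the ``linearization at order $j$'' of the nonlinear soliton operator $N(\eta) := \del_s^2 \eta/|\del_s \eta|^2$ around its leading term $\eta_0$. First I would write $\eta = \eta_0 + u$ with $u = \sum_{k\geq 1}\tau^k \eta_k(s)$ and formally Taylor-expand
\[
N(\eta) = N(\eta_0) + DN(\eta_0)[u] + R(\eta_0, u),
\]
where $R$ collects all terms of order $\geq 2$ in $u$. Since every monomial in $R$ contains at least two factors $\tau^{k_i} \eta_{k_i}$ with $k_i \geq 1$ and $\sum k_i = j$, its $\tau^j$ coefficient involves only $\eta_k$ with $1 \leq k \leq j-1$. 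Hence
\[
[N(\eta)]_j = DN(\eta_0)[\eta_j] + G_j(\eta_0, \eta_1, \ldots, \eta_{j-1}),
\]
and the polynomial character of $G_j$ in $\del_s \eta_i, \del_s^2 \eta_i$ for $i \leq j-1$ (with coefficients that are smooth functions of $\eta_0$ alone, coming from expanding $|\del_s \eta|^{-2}$ as a Neumann series in powers of $|\del_s \eta_0|^{-2}$) is immediate from this Taylor expansion.

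Next I would compute the linearization explicitly,
\[
DN(\eta_0)[h] = \frac{\del_s^2 h}{|\del_s \eta_0|^2} - \frac{2(\del_s^2 \eta_0)\,\langle \del_s \eta_0, \del_s h\rangle}{|\del_s \eta_0|^4},
\]
and bring in the asymptotics established earlier. The lemma on soliton asymptotics gives $\eta_0(s) = as + \calO(e^{-cs^2})$ with $|a|=1$ after imposing \eqref{normalization}, so $|\del_s \eta_0|^{-2} = 1 + \tilde E_0$ with $\tilde E_0 \sim 0$; the soliton equation \eqref{stationary} rewrites $\del_s^2 \eta_0 = |\del_s \eta_0|^2(\eta_0 - s\del_s \eta_0)$, and since $\eta_0 - s\del_s \eta_0 \sim 0$ one obtains $\del_s^2 \eta_0 \sim 0$. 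Substituting these facts gives
\[
DN(\eta_0)[\eta_j] = \del_s^2 \eta_j + E_1\,\del_s^2 \eta_j + E_2\,\del_s \eta_j,
\]
with $E_1 = \tilde E_0$ and $E_2$ the (matrix-valued) coefficient $-2(\del_s^2 \eta_0)\langle \del_s \eta_0, \cdot\rangle/|\del_s \eta_0|^4$; both are functions of $\eta_0$ and its derivatives alone, and both are $\sim 0$.

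Finally, substituting back into \eqref{atj} and recognising $\del_s^2 \eta_j + s\del_s \eta_j - (j+1)\eta_j = \calL_j \eta_j$, I would conclude
\[
\calL_j \eta_j + E_1\,\del_s^2 \eta_j + E_2\,\del_s \eta_j + G_j(\eta_0,\ldots,\eta_{j-1}) = 0,
\]
which is precisely the stated form with $Q_j := G_j$. The main, rather mild, obstacle is verifying that the ``error'' coefficients $E_1, E_2$ really decay exponentially; this reduces entirely to the two facts $|\del_s \eta_0|^{-2} \sim 1$ and $\del_s^2 \eta_0 \sim 0$, both immediate consequences of the soliton asymptotics together with \eqref{stationary}. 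Everything else is clean algebra driven by the power-series expansion of $N(\eta)$ around $\eta_0$.
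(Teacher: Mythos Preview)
Your proof is correct and is essentially the same computation as the paper's, just organized through the linearization $N(\eta_0+u)=N(\eta_0)+DN(\eta_0)[u]+R$ rather than by direct expansion.  The paper writes $|\eta'|^2 \sim 1 + \sum_{\ell\ge 1}\tau^\ell\bigl(2\eta_0'\cdot\eta_\ell' + \sum_{i=1}^{\ell-1}\eta_i'\cdot\eta_{\ell-i}'\bigr)$, expands $F(w)=1/w$ as a geometric series about $w=1$, multiplies by $\eta'' \sim \sum_{k\ge 1}\tau^k\eta_k''$ (discarding the exponentially small $\eta_0''$), and reads off the $\tau^j$ coefficient as $\eta_j''+Q_j$; the suppressed $\sim$-errors are precisely your $E_1\del_s^2\eta_j+E_2\del_s\eta_j$.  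One small point worth noting: the paper's route produces $Q_j$ explicitly as a polynomial with \emph{numerical} coefficients in the $\eta_i'$ (including $\eta_0'$) and $\eta_i''$, namely $Q_j=\sum_{i=1}^{j-1}\eta_i''\bigl(B_{j-i}-2\eta_0'\cdot\eta_{j-i}'\bigr)$, and this concrete form is what drives the next two lemmas on the degree and parity of $\eta_j$.  Your $G_j$ instead carries coefficients like $|\eta_0'|^{-2k}$, which are smooth but not polynomial in $\eta_0'$; this is harmless here (just write $|\eta_0'|^{-2}=1+E$ with $E\sim 0$ and absorb), but you would want the paper's explicit form if you continue to the degree and parity arguments.
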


\begin{proof} Recalling again that $'$ is the same as $\partial_s$, write 
\[
\eta''/|\eta'|^2 = F( |\eta'|^2) \eta'' \quad \mbox{with}\ \ F(w) = 1/w.
\]

By \eqref{normalization}, $|\eta_0'|^2 \sim 1$, hence
\[
|\eta'|^2 \sim 1 + \sum_{\ell=1}^\infty \tau^\ell \left(2 \eta_0' \cdot \eta_\ell' + 
\sum_{i=1}^{\ell-1} \eta_i' \cdot \eta_{\ell-i}'\right). 
\]
Expanding $F$ about $w=1$ yields
\[
  F( |\eta'|^2) \sim  \sum_{p=0}^\infty  (-1)^p \bigg(\sum_{\ell=1}^\infty (2 \eta_0' \cdot \eta_\ell' +
  \sum_{i=1}^{\ell-1} \eta_i' \cdot \eta_{\ell-i}'  ) \tau^\ell  \bigg)^p 
  \sim 1  + \sum_{\ell=1}^\infty (-2\eta_0' \cdot \eta_\ell' + B_\ell) \tau^\ell  
\]
where each $B_\ell$ is a sum of products, each summand having at least two factors, of terms $\eta_i' \cdot \eta_j'$.  In fact, a moment's reflection
reveals that each monomial in $B_\ell$ is a constant multiple of $(\eta_{r_1}' \cdot \eta_{r_2}') \ldots (\eta_{r_{2i+1}}' \cdot \eta_{r_{2i+2}}')$
where $r_1 + \ldots + r_{2i+2} = \ell$ and $i \geq 0$.   Note that because of these last two conditions, $\eta_\ell'$ does not appear in $B_\ell$, so
we can write $B_\ell = B_\ell( \eta_0', \ldots, \eta_{\ell-1}')$. 

Next, 
\begin{equation*}
  \begin{split}
F(|\eta'|^2) \eta''  & \sim  \left( 1 + \sum_{\ell=1}^\infty (-2 \eta_0' \cdot \eta_\ell' + B_\ell) \tau^\ell \right)  \big(\tau \eta_1'' + \tau^2 \eta_2'' + \ldots\big) \\
& \sim \sum_{j=1}^\infty \tau^j \left( \eta_j'' + \sum_{i=1}^{j-1} \eta_i'' (B_{j-i} - 2 \eta_0' \cdot \eta_{j-i}')\right), 
\end{split}
\end{equation*}
where $\eta_0''$ has been omitted since it decays exponentially. We read off from this that 
\[
\left[\frac{\eta''}{|\eta'|^2}\right]_j = \eta_j'' + Q_j,
\]
where $Q_j$ is a polynomial in $\eta_i'$, $0 \leq i \leq j-1$ and $\eta_i''$, $1 \leq i \leq j-1$. 
\end{proof}

\begin{lemma}
For every $j \geq 0$, $\eta_j$ has degree $j+1$. 
\end{lemma}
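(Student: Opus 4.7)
The plan is to prove this by induction on $j$. The base case $j=0$ is immediate: equation \eqref{atj} for $j=0$ reduces to the soliton equation \eqref{stationary}, and the earlier asymptotic lemma (giving $\eta(s) = as + \calO(e^{-Cs^2})$) shows that $\eta_0$ is of degree $1 = 0+1$.

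For the inductive step, assume $\eta_i$ has degree $i+1$ for all $0 \leq i \leq j-1$, and consider the equation for $\eta_j$, which by the preceding lemma has the form
\begin{equation*}
\calL_j \eta_j + E_1 \del_s^2 \eta_j + E_2 \del_s \eta_j = -Q_j(\eta_0, \ldots, \eta_{j-1}),
\end{equation*}
with $E_1, E_2 \sim 0$ depending only on $\eta_0$. By the first lemma of this section (applied to the perturbed operator $\calL_j + E_1 \del_s^2 + E_2 \del_s$), it suffices to show that the right-hand side $Q_j$ has degree at most $j-1$; then there is a (unique, up to exponentially decaying terms) solution $\eta_j$ of degree $j+1$ with prescribed boundary value $\eta_j(0)$, which itself is fixed by the matching conditions at the interior vertex.

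The main technical step is therefore the degree count for $Q_j$. Tracing through the explicit form produced in the previous lemma,
\begin{equation*}
Q_j = \sum_{i=1}^{j-1} \eta_i'' \bigl(B_{j-i} - 2 \eta_0' \cdot \eta_{j-i}'\bigr),
\end{equation*}
where every monomial in $B_\ell$ is (a constant multiple of) a product $(\eta_{r_1}'\cdot\eta_{r_2}')\cdots(\eta_{r_{2p+1}}'\cdot\eta_{r_{2p+2}}')$ with $r_1+\cdots+r_{2p+2}=\ell$. By the inductive hypothesis, $\del_s \eta_r$ has degree $r$ and $\del_s^2 \eta_r$ has degree $r-1$ (for $r \geq 1$; and $\eta_0'' \sim 0$). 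Each monomial in $B_\ell$ therefore has degree exactly $\ell$, so $B_\ell$ has degree $\ell$. Hence each summand $\eta_i'' B_{j-i}$ has degree $(i-1)+(j-i) = j-1$, and similarly $\eta_i'' \cdot \eta_0' \cdot \eta_{j-i}'$ has degree $(i-1)+0+(j-i)=j-1$. Thus $\deg Q_j \leq j-1$.

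The main obstacle is simply the combinatorial bookkeeping in this degree count, since $Q_j$ arises through expansion of $F(|\eta'|^2)\eta''$ and involves arbitrarily many factors. Once the uniform bound $\deg Q_j \leq j-1$ is established, the existence lemma for $\calL_j$ supplies an $\eta_j$ of degree exactly $j+1$, completing the induction.
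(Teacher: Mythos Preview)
Your proposal is correct and takes essentially the same approach as the paper: induction on $j$, with the core step being the degree count showing $Q_j$ has degree $j-1$, carried out exactly as you do by summing the degrees of the factors in each monomial. The only cosmetic difference is that you invoke the first lemma of the section (for the perturbed operator $\calL_j + E_1\del_s^2 + E_2\del_s$) as a black box to conclude $\eta_j$ has degree $j+1$, whereas the paper re-applies the intertwining identity $(\calL_j\eta_j)^{(j+1)} = \calL_{-1}\eta_j^{(j+1)}$ directly at this point; since that identity is precisely what underlies the first lemma, the two arguments are the same.
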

\begin{proof}
We shall prove this by induction, making the hypothesis that
\[
\mbox{For each $i < j$, $\eta_i$ has degree $i+1$.}
\]
We have already shown that $\eta_0(s) \sim a s$, where $a$ is a constant vector, so this hypothesis is true for $j=0$.

We first show that $Q_j(\eta_0, \ldots, \eta_{j-1})$ has degree $j-1$. We have already shown that
\[
Q_j = \sum_{i=1}^{j-1} \eta_i'' (B_{j-i} - 2 \eta_0' \cdot \eta_{j-i}').
\]
Each term in $B_{j-i}$ is a product $(\eta_{r_1}' \cdot \eta_{r_2}') \ldots (\eta_{r_{2\ell+1}}' \cdot \eta_{r_{2\ell+2}}')$ with $r_1 + \ldots + r_{2\ell+2} = 
j-i$ and $\ell \geq 0$. By induction, each $\eta_s'$ has degree $s$, $s \leq j-1$, so the total degree of $B_{j-i}$ is $j-i$; the same is true for
the other term, $\eta_0' \cdot \eta_{j-i}'$. Therefore,  the product with $\eta_i''$ has degree $i-1 + j-i = j-1$.

Next, using the intertwining formula, we see that
\[
(\calL_j  \eta_j)^{(j+1)} = \calL_{-1}  \eta_j^{(j+1)} \sim - (Q_j)^{(j+1)} \sim 0, 
\]
which implies that $\eta_j^{(j+1)} \sim A$, hence $\eta_j\sim P_{j+1}(s)$, as claimed. This completes the inductive step.
\end{proof}

Collecting and summarizing these results, we have now proved the following
\begin{proposition}\label{solution etaj}
Given any constant vectors $a, b \in \RR^2$ and an $\RR^2$-valued function $R_{j-1}$ of degree $j-1$, there exists a unique solution to 
$\calL_j \eta_j  = R_{j-1}$  such that $\eta_j(0) = b$ and $\eta_j(s) \sim a s^{j+1} + \calO(s^j)$ as $s \to \infty$. 
\end{proposition}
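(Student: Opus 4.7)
The proposition is a two-parameter refinement of the preceding lemma: one now prescribes both the boundary value at $s=0$ and the leading coefficient of the asymptotic polynomial at infinity. Since $\calL_j$ and the two constraints all act componentwise on $\RR^2$-valued functions, it suffices to treat the scalar case and apply it to each component of $a, b \in \RR^2$. The strategy is then to construct a particular solution to the inhomogeneous equation and adjust it by the two-dimensional homogeneous kernel, which by the first lemma of this section is spanned by $U_j$ (of degree $j+1$, with nonzero leading coefficient $u_{j+1}$) and $V_j \sim 0$, to match the two prescribed data.

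For the particular solution $\eta_j^{\mathrm{p}}$, I follow the recipe of that first lemma: expand $R_{j-1} \sim \sum_{i=0}^{j-1} b_i s^i$ and solve the triangular recursion
\[
(i+2)(i+1)\, a_{i+2} + (i - j - 1)\, a_i = b_i, \qquad i = 0, \ldots, j-1,
\]
backwards from $a_{j+1} = 0$ (the recursion automatically forces $a_j = 0$), producing a polynomial of degree at most $j-1$ solving the equation modulo an exponentially small remainder, then absorb that remainder via the variation-of-parameters formula against the Wronskian of $U_j$ and $V_j$. Now write
\[
\eta_j = \eta_j^{\mathrm{p}} + \alpha U_j + \beta V_j.
\]
Because $V_j$ decays exponentially, the leading coefficient at infinity of $\eta_j$ equals $\alpha\, u_{j+1}$, so the unique choice $\alpha = a/u_{j+1}$ matches the prescribed $a$. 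With $\alpha$ fixed, the remaining condition $\eta_j(0) = b$ becomes a single linear equation for $\beta$, solvable uniquely provided $V_j(0) \neq 0$.

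The only step with real content is therefore the assertion $V_j(0) \neq 0$. Suppose, for contradiction, that $V_j(0) = 0$. Since $V_j$ also decays exponentially at infinity, integration by parts gives
\[
0 \;=\; -\int_0^\infty (\calL_j V_j)\, V_j \, ds \;=\; \int_0^\infty (V_j')^2 \, ds \;+\; \bigl(j + \tfrac{3}{2}\bigr)\int_0^\infty V_j^2 \, ds,
\]
with vanishing boundary contributions at $s=0$ (from $V_j(0)=0$) and at $s=\infty$ (from exponential decay of $V_j, V_j'$); this forces $V_j \equiv 0$, contradicting its role as a basis element of the kernel. Uniqueness of $\eta_j$ follows in the same spirit: a difference $w = \eta_j - \tilde{\eta}_j$ of two solutions satisfies $\calL_j w = 0$ with vanishing prescribed leading coefficient, so $w = \beta V_j$ for some $\beta$, and $w(0) = 0$ together with $V_j(0) \neq 0$ forces $\beta = 0$.

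I do not foresee any genuine obstacle beyond this short energy identity. The proposition is essentially the endpoint of the structural lemmas already established; everything else is linear bookkeeping, and the $\RR^2$-valued statement drops out by applying the scalar result componentwise.
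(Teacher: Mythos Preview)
Your proof is correct and follows the same route the paper sets up: the proposition is stated in the paper as a summary of the preceding lemmas (``Collecting and summarizing these results, we have now proved the following''), and your argument simply makes the two-parameter counting explicit by decomposing into a particular solution plus $\alpha U_j + \beta V_j$. The one substantive step you add---the energy identity showing $V_j(0)\neq 0$---is exactly the argument the paper uses in \S\ref{Section:model case} for the analogous operators $L_j$, transported here with the shift $j\mapsto j+1$ in the zeroth-order coefficient (whence your constant $j+\tfrac32$ in place of the paper's $j+\tfrac12$).
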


The last in this sequence of results concerns the polynomial structure of $\eta_j$.  We say that a polynomial is even
if it only has terms with even powers of $s$, and odd if it only has terms with odd powers of $s$.
\begin{lemma}
 If $j$ is even, then the polynomial part $P_{j+1}$ of $\eta_j$ is odd, while if $j$ is odd, then $P_{j+1}$ is even.
\end{lemma}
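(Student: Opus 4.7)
The plan is to argue by induction on $j$, using the explicit structure of $Q_j$ derived in the preceding lemma together with the fact that each $\calL_j$ preserves the parity of polynomials. The base case $j=0$ is immediate: $\eta_0(s) \sim as$, whose polynomial part $as$ is odd, matching the parity of $j+1 = 1$.

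For the inductive step, assume that for every $i < j$ the polynomial part of $\eta_i$ contains only monomials of the same parity as $i+1$. Since differentiation of a polynomial shifts its parity by one, $\eta_i''$ has the parity of $i+1$ while $\eta_i'$ has the parity of $i$. In the expression
\[
Q_j = \sum_{i=1}^{j-1} \eta_i''\bigl( B_{j-i} - 2\, \eta_0' \cdot \eta_{j-i}' \bigr),
\]
each monomial of $B_{j-i}$ is a constant multiple of $(\eta_{r_1}' \cdot \eta_{r_2}') \cdots (\eta_{r_{2k+1}}' \cdot \eta_{r_{2k+2}}')$ with $k \geq 0$ and $r_1 + \cdots + r_{2k+2} = j-i$, so it has polynomial parity $j-i$; the cross term $\eta_0' \cdot \eta_{j-i}'$ likewise has parity $j-i$. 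Multiplying by $\eta_i''$ (parity $i+1$) yields a total parity of $(i+1) + (j-i) = j+1$, so the polynomial part of $Q_j$ has the parity of $j+1$.

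It remains to transfer this parity to $\eta_j$ itself. The identity derived in the previous lemma reads
\[
\calL_j \eta_j \,=\, -Q_j - E_1 \eta_j'' - E_2 \eta_j',
\]
and the last two terms are exponentially decaying since $E_1, E_2 \sim 0$ while $\eta_j$ grows only polynomially. Hence, modulo exponentially small remainders, $\calL_j \eta_j$ equals a polynomial of parity $j+1$. Because $\calL_j = \del_s^2 + s\del_s - (j+1)$ preserves polynomial parity, the coefficient recursion used in the existence lemma decouples into two independent subsystems, one for each parity; the subsystem of parity opposite to that of $j+1$ has vanishing source, and combined with the identity $a_j = 0$ established earlier, this forces every polynomial coefficient of $\eta_j$ of the wrong parity to be zero. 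Adding any multiple of the homogeneous solution $U_j$, whose polynomial part is itself of parity $j+1$, does not disturb this. I expect the main piece of bookkeeping to be the parity tally inside $Q_j$; once that is settled, the parity-preserving nature of $\calL_j$ closes the induction.
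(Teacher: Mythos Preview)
Your proof is correct and follows essentially the same inductive strategy as the paper: both reduce to showing that the polynomial part of $Q_j$ has parity $j+1$, and then invoke the parity-preserving recursion for $\calL_j$ together with $a_j=0$ to conclude. The only cosmetic difference is that you compute the parity of $\eta_i'' B_{j-i}$ directly as $(i+1)+(j-i)\equiv j+1\pmod 2$, whereas the paper runs through the four cases according to the parities of $j$ and $i$; your formulation is a bit more streamlined but mathematically identical.
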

\begin{proof}
We prove this again by induction, assuming the statement is true for all $i < j$.  Following the construction of
the solution above, it suffices to show that each $R_{j-1}$ (the polynomial part of $Q_j$) is even or odd,
depending on the parity of $j$.  Indeed, suppose $j$ is even and we have shown that $R_{j-1}$ is
odd.  Then the recursion formula for the coefficients of $P_{j+1}$ above, and the fact that the coefficients
$b_i$ vanish when $i$ is even, show that it too involves only odd powers of $s$; similarly when $j$ is odd,
then $P_{j+1}$ can only involve even powers.

To prove the assertion about $Q_j$, we analyze each summand $\eta_i'' B_{j-i}$ and $\eta_i'' \eta_0' \cdot \eta_{j-i}'$.
Suppose $j$ is even.  If $i$ is also even, then $\eta_i$ is odd, so the same is true for $\eta_i''$.   Furthermore, each
monomial in $B_{j-i}$ is composed of factors $\eta_{r_k}'$, which are polynomials of degree $r_k$. 
The sum of these degrees adds up to $j-i$, which is even, and so there must be an even number of factors
which are odd polynomials.  Hence each summand in $B_{j-i}$ is even,  so the product with $\eta_i''$ is odd.
On the other hand, if $i$ is odd, then $\eta_i''$ is even, while $B_{j-i}$ is a sum of products of polynomials,
each either even or odd, and since the overall product is odd, there must be an odd number of odd factors,
so $B_{j-i}$ is odd and the product with $\eta_i''$ is odd.   Similar considerations apply to the other
term $\eta_i'' \eta_0' \cdot \eta_{j-i}'$.  This proves the parity assertion when $j$ is even. The corresponding
assertion when $j$ is odd is proved similarly.
\end{proof}

Here is we have accomplished so far. Proposition \ref{solution etaj} shows that once we have specified
the coefficients $b_j$ and $a_j$ giving the boundary value at $s=0$ and the leading term of the
polynomial asymptotic as $s \to \infty$, then there exists an infinite sequence of functions $\eta_j(s)$ 
such that the asymptotic sum   $\sum \tau^j \eta_j(s)$ is a formal solution of the flow equation near $\ff$.  
Said slightly differently, if we choose a smooth function $\tilde{\eta}(\tau,s)$ with this Taylor series as 
$\tau \to 0$, then $N( \tilde{\eta})$ vanishes to infinite order at $\ff$.  To complete this construction
of the approximate solution, we must show how to choose these values $a_j$ and $b_j$.   As already
seen in our treatment of the linear heat equation, the values $a_j$ will be determined by the Taylor
coefficients of the arcs $\gamma^{(\ell)}$ at $x=0$. On the other hand, the Herring conditions only
specify half of the necessary Taylor coefficients along the left face of each $Q_h$ (or $P$), and hence
we will need to specify the other parts of the Taylor series (which we can do arbitrarily). 

Consider first the issue of matching coefficients at the intersection of the front and bottom faces.  In
this region we use the coordinates $T = t/x^2 = 1/2s^2$ and $y = x = s\tau$, and also revert to $\gamma = \tau \eta$. 
Then 
 \begin{equation*}
\begin{split}
\gamma(\tau, s) & \sim  \sum_{j=0}^\infty \tau^{j+1} \eta_j(s) \sim 
\sum_{j=0}^\infty  \sum_{i = 0}^{j+1}  a_{ij} s^{j+1- i} \tau^{j+1} \\ 
 & \sim  \sum_{j=0}^\infty \sum_{i=0}^{ [(j+1)/2]}  a_{ij}   (2T)^{- (j+1-2i)/2} ( y 2T^{1/2})^{j+1}  \sim
\sum_{j=0}^\infty \sum_{i=0}^{ [(j+1)/2]}  2^i a_{ij}   T^{i } y^{j+1}. \end{split}
\end{equation*}

Now dropping the superscript, each initial curve $\gamma_0(x) = \gamma_0^{(\ell)}(x)$ is smooth for $x \geq 0$, so in the $(y,T)$ coordinates,
\[
\gamma_0(x) \sim  \sum_{j=0}^\infty  A_j y^j.
\]
It is thus necessary that $a_{0j}$, the coefficient of $s^{j+1}$ in $\eta_j$, equals $A_j$.  As we have showed earlier, computing in
the $(\tau,s)$, the rest of the terms $a_{ij}$, $i \geq 0$, are determined by this leading coefficient.   Establishing the equivalent
recursion in the $(T,y)$ coordinates is more difficult, but is not necessary simply because the recursion is determined by
the underlying equation, and hence must come out the same in either coordinate system.

Now let us return to the expansion along the left face, where $s = 0$ and $\tau \geq 0$.  In the following, recall that
along this face we impose the boundary conditions that three curves meet at equal angles. For simplicity, call
these three curves $\eta^{(1)}$, $\eta^{(2)}$ and $\eta^{(3)}$.  The relevant endpoint of each $\eta^{(j)}$ is specified
by four real numbers (the coordinates of the position and tangent vector at that point), so the full space of parameters for these 
endpoints for the  three curves is $12$-dimensional.  The Herring conditions impose $6$ (nonlinear) constraints,
namely that the endpoints of these curves coincide (four real conditions) and that the unit tangents at this point of coincidence 
sum to $0$ (two further conditions).  We now describe the remaining six free parameters.   Two are the coordinates of
the vertex where the three curves come together, and three are the lengths of the tangents $(\eta^{(j)})'(0)$ at this point.
For the remaining coordinate, observe that, up to reflection, the normalized (unit length) triod is completely determined 
by specifying the direction of $(\eta^{(1)})'(0)$, and this in turn is determined by the angle it makes with some 
fixed direction (e.g.\ the positive $x_1$ axis). This angle might be called the phase of the triod.   

To formalize this, consider a collection of six vectors $v_1, v_2, v_3, w_1, w_2, w_3 \in \RR^2$, and write $\bf{v}$ and $\bf{w}$ for
the two triples of vectors respectively.  Now define
\[
F: \{ ( \mathbf{v}, \mathbf{w} ): |w_i| \neq 0, \ i = 1, 2, 3\}  \longrightarrow  \RR^6 \times \RR^2 \times (\RR^+)^3 \times S^1
\]
\[
F( \mathbf{v}, \mathbf{w} )  =  \big( v_1 - v_2, v_2 - v_3, \sum w_i/|w_i|, v_1, |w_1|, |w_2|, |w_3|,  \arccos (w_1 \cdot e_1/ |w_1|) \big). 
\]
The coordinate $\theta$ is only defined locally, where $\cos \theta = w_1 \cdot e_1/|w_1|$, but there is 
a local branch of this map which suffices for our purposes.  A straightforward calculation shows that this 
map is a local diffeomorphism.    

The Herring conditions are equivalent to 
\[
F( (\eta^{(1)}(0), \eta^{(2)}(0), \eta^{(3)}(0), (\eta^{(1)})'(0), (\eta^{(2)})'(0), (\eta^{(3)})'(0)) \in \{0\} \times (\RR^+)^3 \times S^1 \times \RR^2
\]
Now choose any smooth mappings $\beta(\tau), \zeta_1(\tau), \zeta_2(\tau), \zeta_3(\tau), \theta(\tau)$ 
where $\zeta_i(0) = |(\eta^{(i)}_0)'(0)|$ and the initial angle $\theta(0)$ that $(\eta^{(1)}_0)'(0)$ makes with the $e_1$ direction
are determined by the fixed soliton solution at time $\tau = 0$, and $\beta(\tau)$ represents the common vertex at time $\tau$. 
Then $F^{-1}( 0, 0, 0, \beta(\tau), \zeta_1(\tau), \zeta_2(\tau), \zeta_3(\tau), \theta(\tau))$ specifies the full
Cauchy data of the three curves $\eta^{(i)}(s)$ at $s=0$. 

Having made these choices of complementary data, it is now easy to construct the entire Taylor series of each $\eta^{(i)}(\tau,s)$ 
at $s=0$ for $0 \leq \tau < \tau_0$ using \eqref{liftedeq1}.  In particular,  this determines all of the constant initial 
values $b_j$ for $\eta_j(0)$. 

Altogether, we have determined the full Taylor series at all faces of each $Q_h$, and of each $P$, for any putative solution 
to the full nonlinear equation with coupled to the Herring boundary conditions, and also imposing the extra complementary 
boundary conditions as above.    We now use Borel's lemma to choose functions $\widehat{\gamma}^{(i)}$ which have 
precisely these Taylor series at all boundaries.   These constitute the evolving network $\widehat{\Gamma}(t)$. 
By construction,  $N( \widehat{\Gamma}) := f$ is $\calC^\infty$  and vanishes to all orders at all boundary faces.

\section{Linearization}\label{subsection:linearized operator} 
We next study the linearization of \eqref{evolution3} around the approximate solution $\widehat{\Gamma}$ constructed in
the last section. By the naturality of the process of computing linearizations, we may either compute the linearization
of the equation on the unblown up spaces and then lift to the blowups, or else first lift the nonlinear equation and
then compute blowups.  We do the former.

First suppose that $\Gamma_\kappa = \{ \gamma^{(j)}_\kappa\}$ is a family of networks, i.e., for each $\kappa$, $\Gamma_\kappa$ a
is a time-dependent network which does not necessarily satisfy the flow equations.  Differentiating  \eqref{evolution3}  with
respect to $\kappa$ leads to the operator
\begin{equation*}
\del_t u^{(j)} =  \frac{\del_x^2 u^{(j)}}{|\del_x \gamma^{(j)}|^2} -  2 \frac{ \langle \del_x \gamma^{(j)}, \del_x u^{(j)}\rangle}{ |\del_x \gamma^{(j)}|^4}
\del_x^2 \gamma^{(j)}, 
\end{equation*}
where $u^{(j)} = \del_\kappa\gamma^{(j)}|_{\kappa = 0} $.  The linearized boundary conditions are
\begin{equation*}
u^{(i)}(t,0) = u^{(j)}(t,0)\ \ \mbox{for all}\ i \neq j, \quad
\sum \left(\frac{ \del_x u^{(j)}}{|\del_x \gamma^{(j)}|} - \frac{\langle \del_x \gamma^{(j)}, \del_x u^{(j)}\rangle}{|\del_x \gamma^{(j)}|^3} \del_x \gamma^{(j)}\right) (t,0) = 0, 
\end{equation*}
where the sum is over the triples associated to any interior vertex.   
The latter boundary condition can be rewritten in the more revealing form
\begin{multline*}
\sum |\del_x \gamma^{(j)}(t,0)|^{-1} P^{(j)}(\del_x u^{(j)}(t,0)) = 0, \ \ \mbox{or more simply}\ \ \ \sum c_j(t) P^{(j)}( \del_x u^{(j)}(t,0)) = 0
\end{multline*}
where $c_j(t) = |\del_x \gamma^{(j)}(t,0)|^{-1}$ and $P^{(j)}$ is the orthogonal projection onto the unit normal to $\gamma^{(j)}$.  The interior and boundary
operators will be written as $\del_t - \calL$ and $\beta$, respectively. It will be useful to separate the boundary operator into
two separate terms, $\beta( u, \del_x u) = (\beta_0(u), \beta_1(\del_x u))$, where the first only depends on $u(t,0)$ 
and the second only depends on $\del_xu(t,0)$.  We give a brief derivation of the ellipticity of these boundary conditions later.
We will consider the lifts of these operators, or rather the operators $t (\del_t - \calL)$ and $x\beta$ to each $Q_h$ and $P$.

Our goal is to solve the nonlinear parabolic equation $\bbM(\widehat{\Gamma} + U) = 0$ and $B(\widehat{\Gamma} + U) = 0$. 
The operators $\bbM$ and $B$ are simply the full nonlinear operators corresponding to the parametrized flow and the Herring
boundary conditions, respectively. Expanding around the approximate solution, this becomes 
\[
(t\del_t - t \calL) U = f + Q_i(U), \quad \beta(U) = S_b(U),
\]
where $f = - \bbM(\widehat{\Gamma})$ is the error term and $Q_i$ and $S_b$ are the quadratically vanishing remainders.  By construction, $f$ 
vanishes to all orders at all boundaries of $Q_h,  P_h$, and similarly, $B(\widehat{\Gamma}) = 0$.  We shall prove the existence
and uniqueness of a solution $U$ which vanishes rapidly as $t \to 0^+$, and by the usual contraction mapping arguments, to do so it 
suffices to solve the linear system
\begin{equation}
(\del_t - \calL) U = H, \quad \beta(U) = K
\label{fulllinsys}
\end{equation}
with good estimates, where $H(t,x)$ and $K(t)$ both vanish to all orders in $t$, or equivalently, along the front and bottom faces.
Notice that we may as well assume that the correction term $U$ satisfies the matching condition $U^{(i)}(t,0) = U^{(j)}(t,0)$, and
the nonlinear part of the boundary term $S_b(U)$ only involves the second component of the nonlinear boundary operator $B(U)$.
Therefore, it suffices to consider boundary conditions of the form $\beta(U) = (\beta_0(U), \beta_1(\del_x U)) = (0, k)$, but 
since it is no harder, we consider the full system where $\beta_0(U) \neq 0$ as well. 

Our aim is to show that if the inhomogeneous terms $H$ and $K$ in \eqref{fulllinsys} are smooth and decay to some high order 
as $t \to 0$, then there exists a solution $U$ to this system with these same properties. We focus on obtaining solutions
with appropriate decay first; the higher regularity is standard and will be discussed at the end.  The main step is to recast 
this problem by showing that 
\begin{equation}
t(\del_t - \calL):  t^N \calX \to t^{N} \calY + t^{N}\calZ
\label{F0}
\end{equation}
is an isomorphism for all $N \gg 1$, where $\calX$, $\calY$ and $\calZ$ are appropriately defined weighted 
$L^2$-based Sobolev spaces (thus $U \in t^N \calX$, $H \in t^{N} \calY$ and $K \in t^N \calZ$). 
The uniqueness part of this shows that if $H$ and $K$ vanishes to all orders, then so does $U$. Writing 
$H = t^{N}h$, $K = t^{N} k$, and $U = t^N u$, then the problem is equivalent to showing that
\begin{equation*}
(t\del_t + N - t\calL) u = h,\ \ \beta(u) = k.
\end{equation*}
is an isomorphism for $N$ large. 

The space $\calZ$ is simply $L^2(Q_h; \frac{dt}{t} dx)$.  Away from the boundaries, elements of $\calX$ lie
in the parabolic Sobolev space $H^{1,2}$, but the precise weight factors at each boundary face of $Q_h$ are
not obvious at the outset. We also impose fractional Sobolev regularity on $k_0$ and $k_1$.  To show that \eqref{F0} 
is an isomorphism, we prove the a priori estimate
\begin{equation}
||u||_{\calX} \leq C ( ||h||_{ \calY} + ||k||_{\calZ}).
\label{F02}
\end{equation}
Concealed is the fact that the norms depend on $N$, again as spelled out carefully below. Given \eqref{F02}, a standard continuity 
argument implies the unique solvability of the original problem, and hence that \eqref{F0} is an isomorphism. 
We defer a precise statement of the estimate until the end of the proof.

This estimate is not standard because of the behavior of the coefficients of $\calL$ at the front faces of $Q_h$.
To handle this, we consider the lift of this operator to the blowup. We follow in broud outline a proof of a priori
estimates given by Krylov \cite{Krylov}, adapted to the present situation.  By standard localization and approximation 
arguments, it is enough to prove a priori estimates for each of the following simpler model operators near the different 
faces of each $Q_h$. In the following, we use either the  original coordinates $(t,x)$
or else the projective coordinates $(\tau,s)$. 
\begin{itemize}
\item[a)] $\tau\del_\tau + 2N - \del_s^2  - s\del_s$ on functions supported on the region $0 < \tau < T$, $s_0 < s < \infty$,
i.e., near $\ff \cap \bff$; 
\item[b)] $t\del_t + N - t\del_x^2$ on smooth functions compactly supported on $0 < t < T$, $0 < c_1 < x < c_2$, i.e., 
near $\bff$ but away from $\ff$; 
\item[c)] $\tau\del_\tau + 2N - \del_s^2$ on smooth functions on three copies of $Q_h$ joined along $\lf$ where $0 < \tau < T$ 
$0 \leq s_i < s_0$, $i =  1, 2, 3$, and satisfying the boundary conditions $\beta(u) = k$, i.e., near $\ff \cap \lf$. 
\end{itemize}
We call these regions I, II and III, respectively. These are models in the sense that the actual lifted operator in each of these
regions can be considered as a lower order perturbation of these models, and those lower order terms are readily absorbed into
the estimates below. The appearance of $2N$ rather than $N$  in a) and c) is because we are conjugating 
by $t^N = \tau^{2N}$, but we can obviously replace $2N$ by $N$ here. In each of these cases, by density arguments, 
it suffices to prove the estimate for smooth compactly supported functions whose support does not intersect $t=0$ or $\tau = 0$. 

\medskip

\noindent {\bf Region I:}  First suppose that $u$ is supported near the intersection of the front and bottom faces.  Although
the nondegenerate coordinate system here are $T = t/x^2$ and $y = x$, it is simpler to derive estimates using the
$(\tau,s)$ coordinates. An auxiliary argument, which gives the same estimate but in the coordinates $(t,x)$, is included
here to show how to join this estimate to the one in region II. 

Suppose then that 
\begin{equation}
\tau\del_\tau u - \del_s^2u - s\del_s u + Nu = \tau^2 h.
\label{F1} 
\end{equation}
First multiply both sides of \eqref{F1} by $Nu$ and integrate in both variables with respect to the measure $d\tau ds$. 
Given the support properties of $u$,  integrations by parts leave no boundary terms. We obtain the identities:
\[
N \int  \tau (\del_\tau u) u \,d\tau ds = \frac{N}{2}\int \tau\del_\tau u^2 \,d\tau ds = -\frac{N}{2} \int u^2 \,d\tau ds,
\]
\[
-N \int u \del_s^2 u  \,d\tau ds= N \int (\del_s u)^2 \,d\tau ds,
\]
and 
\[
-N \int su \del_s u  \,d\tau ds= -\frac{N}{2} \int s \del_s u^2  \,d\tau ds= \frac{N}{2} \int u^2 \,d\tau ds.
\]
Using these, and applying the Cauchy-Schwarz and arithmetic/geometric mean inequality on the right hand side, we arrive at
\begin{equation}
N \int (\del_s u)^2  \,d\tau ds+ N^2 \int u^2  \,d\tau ds\leq C \int \tau^4 h^2 \,d\tau ds. 
\label{F2}
\end{equation}

Next, multiply \eqref{F1} by $s^2 u$ and integrate with  respect to the same measure to get
\[
\int \tau\del_\tau u (s^2 u) - \del_s^2 u (s^2 u) - s\del_s u (s^2 u) + N s^2 u^2   \,d\tau ds = \int \tau^2 h s^2 u \,d\tau ds.
\]
Here we observe that
\[
\int (\tau \del_\tau u) s^2 u  \,d\tau ds= \frac12 \int \tau \del_\tau (u^2) s^2 \, d\tau ds =  -\frac12 \int s^2 u^2 \,d\tau ds, 
\]
\[
- \int (\del_s^2 u) (s^2 u)  \,d\tau ds= \int (\del_s u) (s^2 \del_s u + 2s u)  \,d\tau ds= \int s^2 (\del_s u)^2  \,d\tau ds- \int u^2 \,d\tau ds,
\]
and
\[
- \int s^3 u \del_s u  \,d\tau ds= -\frac12 \int s^3 \del_s(u^2)  \,d\tau ds= \frac32 \int s^2 u^2 \,d\tau ds.
\]
Putting these together and estimating the right hand side the  same way gives
\begin{equation}
N \int s^2 u^2  \,d\tau ds+ \int s^2 (\del_s u)^2 \,d\tau ds \leq C \int s^2 \tau^4 h^2  \,d\tau ds + \int u^2 \,d\tau ds \leq C \int s^2\tau^4 h^2  \,d\tau ds
\label{F3}
\end{equation}
since $s \geq s_0 \geq 1$. 

Now rewrite the equation as
\[
(\tau\del_\tau - \del_s^2)u = (s\del_s u - Nu + \tau^2 h).
\]
Squaring both sides and integrating, we get
\[
\int \tau^2 (\del_\tau u)^2 + (\del_s^2 u)^2 - 2 \tau\del_\tau u \del_s^2 u  \,d\tau ds\leq 
C \int s^2 (\del_s u)^2 + N^2 u^2 + \tau^4 h^2 \,d\tau ds.
\]
The cross-term equals
\[
-2 \int \tau\del_\tau u \del_s^2 u  \,d\tau ds= 2 \int \tau\del_\tau (\del_s u)  \del_s u \,d\tau ds =
\int {\color{green}\tau}\del_\tau (\del_s u)^2 \, d\tau ds= - \int (\del_s u)^2 \,d\tau ds. 
\]
Thus using \eqref{F2} and \eqref{F3}, we have 
\begin{equation}
\int \tau^2 (\del_\tau u)^2 + (\del_s^2 u)^2  \,d\tau ds \leq C \int s^2 \tau^4 h^2 \,d\tau ds.
\label{F4}
\end{equation}

Finally, \eqref{F2}, \eqref{F3} and \eqref{F4} together prove that
\begin{equation}
\int (\tau \del_\tau u)^2 + (\del_s^2 u)^2  \,d\tau ds+ \int s^2 (\del_s u)^2 + N^2u^2   \,d\tau ds+ N \int s^2 u^2  \,d\tau ds
\leq C \int s^2 \tau^4 h^2 \,d\tau ds.
\label{F5}
\end{equation}
This completes the main estimate in this region.  

We now translate this estimate to the $(t,x)$ coordinate system. As explained earlier, this is necessary to show how to 
join this estimate to the one for region II. We have
\[
d\tau ds = \det \begin{pmatrix}  1/\sqrt{2t}  & 0 \\ -x/2\sqrt{2}t^{3/2} & 1/\sqrt{2t} \end{pmatrix}  dt dx = \frac{1}{2t} dt dx \quad \mbox{and}
\]
\[
\tau \del_\tau = 2t\del_t + x \del_x, \quad s\del_s = x \del_x, \quad \del_s^2 =2 t \del_x^2,
\]
and therefore, \eqref{F5} is the same as
\[
\int \left[ ( (2t\del_t + x \del_x)u)^2 + 4t^2 (\del_x^2 u)^2 + (x\del_x u)^2
+N (x^2/2t) u^2  + N^2 u^2 \right] \, \frac{dt dx}{2t}
\leq C \int x^2 h^2 \, dt dx,
\]
or equivalently, 
\begin{equation}
\int \frac{1}{t} (2t\del_t u + x \del_xu)^2 + t (\del_x^2 u)^2 + \frac{x^2}{ t} (\del_x u)^2 + (N \frac{x^2}{t^2}  +  N^2 \frac{1}{t})u^2 \, dtdx
\leq C \int x^2 h^2 \, dt dx. 
\label{E0}
\end{equation} 

We now prove this latter estimate by similar means.  It suffices now, by approximation,  to consider the simple 
model equation $(t\del_t + N - t\del_x^2) u = th$.  For simplicity, we suppose that $u$ is supported  in the region where $t/x^2 \leq 1$.

First multiply the equation by $x^2 u/t^2$ and integrate to get
\[
\int \frac{x^2}{t} u \del_t u +  N\frac{x^2}{t^2} u^2  - (\del_x^2 u) \left(\frac{x^2}{t} u\right) \, dt dx  = \int \left(\frac{x^2}{t}\right) h u \, dt dx.
\]
The first term on the left equals
\[
\frac{1}{2} \int \frac{x^2}{t} \del_t (u^2) \, dt dx= \frac12 \int \frac{x^2}{t^2} u^2\, dt dx.
\]
The third term on the left becomes
\[
\int \frac{1}{t} \del_x u  \del_x ( x^2 u) \, dt dx= \int \frac{x^2}{t} (\del_x u)^2 + \frac{2x}{t} u \del_x u \, dt dx= \int \frac{x^2}{t} (\del_x u)^2 - \frac{1}{t} u^2 \, dt dx..
\]
The right hand side of this equality is estimated by
\[
\frac12\int \frac{x^2}{t^2} u^2 \, dt dx+ \int x^2 h^2\, dt dx.
\]
Altogether, these prove that
\begin{equation*}
\int  \frac{x^2}{t} (\del_x u)^2 + N \frac{x^2}{t^2} u^2\, dt dx \leq  \int x^2 h^2\, dt dx + \int \frac{1}{t} u^2\, dt dx.
\end{equation*}

Now multiply the equation by $N u/t$ and integrate, to obtain
\[
\int N u \del_t u +  N^2 \frac{1}{t} u^2 + N(\del_x u)^2 \, dt dx= \int N h u\, dt dx \leq C \int x^2 h^2 + \frac{1}{4} \int N^2 \frac{1}{x^2} u^2\, dt dx.
\]
The first term integrates to $0$. Furthermore, in this region, $(1/x^2) = (1/t)(t/x^2) \leq 1/t$, so we can move the second term on the right 
over to the left, leading to
\begin{equation}
\int N(\del_x u)^2  + N^2\frac{1}{t} u^2 \, dt dx \leq C \int x^2 h^2\, dt dx.
\label{E2}
\end{equation}

For the next step, rewrite the equation as $\del_t u - \del_x^2 u = h - (N/t) u$, square both sides and multiply by $t$. This gives
\[
\int t (\del_t u)^2 + t (\del_x^2 u)^2 - 2t \del_t u \del_x^2 u \, dt dx= \int t h^2 - 2N hu + \frac{N^2}{t} u^2\, dt dx.
\]
The third term on the left can be transformed as
\[
2 \int t \del_t \del_x u \del_x u \, dt dx= \int t \del_t  (\del_x u)^2\, dt dx = - \int (\del_x u)^2\, dt dx.
\]
For the first term on the right, use $t \leq x^2$. The middle term on the right can be estimated by
\[
\int N^2 \frac{1}{x^2} u^2 + \int x^2 h^2 = \int N^2 (1/t)(t/x^2) u^2 + \int x^2 h^2 \leq \int N^2 \frac{1}{t} u^2 + \int x^2 h^2
\leq C \int x^2 h^2,
\]
using \eqref{E2}. 

Altogether, this step shows that
\begin{equation*}
\int t (\del_t u)^2 + t (\del_x^2 u)^2 \, dt dx\leq C \int x^2 f^2 + \int (\del_x u)^2 \, dt dx \leq C' \int x^2 h^2 \, dt dx
\end{equation*}
the last inequality following from \eqref{E2}.

We have now estimated almost all the terms on the left in \eqref{E0}.  To complete the proof, note that
\[
\int \frac{1}{2t} (2t\del_tu + x \del_x u)^2 \, dt dx= \int 2t (\del_t u)^2 + \frac{x^2}{2t} (\del_x u)^2 + 2 \del_t u x \del_x u\, dt dx.
\]
The first and second terms have already been estimated, while 
\[
\int \del_t u (x \del_x u) \, dt dx\leq \int  t (\del_t u)^2 + \frac{x^2}{t} (\del_x u)^2\, dt dx
\]
and both of these terms have been estimated.  This proves
\begin{equation}
\int \left[ t (\del_t u)^2 + t (\del_x^2u)^2 + (N + \frac{x^2}{t}) (\del_x u)^2 + (N \frac{x^2}{t^2} + N^2\frac{1}{t}) u^2 \right]\, dt dx \leq
\int x^2 h^2\, dt dx.
\label{E4} 
\end{equation}

\medskip

\noindent{\bf Region II:}  The previous calculations have been written out carefully, but these all adapt readily when $0 < c \leq x \leq C$. 
Indeed, in such a region, \eqref{E4} becomes
\begin{equation*}
\int \left( t (\del_t u)^2 + t (\del_x^2 u)^2 + (N + \frac{1}{t})(\del_x u)^2 + (N/t)(N + 1/t) u^2 \right) \, dt dx
\leq C \int h^2 \, dt dx.
\end{equation*}
This is not a particularly familiar estimate, but is easy enough to derive. One proceeds through essentially the same steps as
above for the equation $(\del_t + (N/t) - \del_x^2 )u = h$: first multiply by $N u$ and integrate, then multiply by $u/t$, and
finally, square the equation $(\del_t - \del_x^2) u = h - (N/t) u$, multiply by $t$ and integrate; in each case one has
to apply Cauchy-Schwarz judiciously. 

\medskip

\noindent {\bf Region III:}  Finally consider a neighbourhood of the intersection of the left and front faces. We use coordinates $(\tau, s)$ 
but now must take the linearized Herring boundary conditions and the interaction between the different evolving arcs into account.
So consider three separate functions $U = (u^{(j)})$, one for each of the three intersecting arcs, and each supported in $s \leq 2$. 
(This use of $U$ is different than the  notation earlier in this section.) 
The model operator in this region is $\tau\del_\tau + N - \del_s^2$, and we write $(\tau\del_\tau + N - \del_s^2)u^{(j)} = \tau^2 h^{(j)}$
and $\beta(U) = K = (k_0, k_1)$, where each $u^{(j)}$ (and hence each $h^{(j)}$ and $K$ as well) vanishes to all orders as $\tau \to 0$, but is 
nonvanishing at $x=0$.   

We next choose a decomposition $u^{(j)} = v^{(j)} + w^{(j)}$, or $U = V + W$, as follows.  The functions $v^{(j)}$ are chosen to
satisfy $(\tau \del_\tau + N - \del_s^2) v^{(j)} = \tau^2 h^{(j)}$, $v^{(j)}(\tau,0) = 0$ (and, say, $v^{(j)}(\tau, s) = 0$ for $s \geq 2$ as well). 
The $w^{(j)}$ are then chosen so that $(\tau\del_\tau +N - \del_s^2)w^{(j)} = 0$ and $\beta( W) = K - \beta(V)$.  
Although $W$ is not supported in $s \leq 1$, we can cut both it and $V$ off to have support in $s \leq 3/2$, and estimate 
the terms coming from the cutoff functions using the earlier estimates for Region I.    The estimates for $V$ and $W$ are handled differently.

We assume for simplicity that the coefficients $c_j(\tau) = |\del_s\gamma^{(j)}(\tau,0)|^{-1}$ of the projectors in the definition of $\beta$ are 
constant; the general case can be handled by approximation. The steps in the estimates for each $v^{(j)}$
are essentially the same as those in Region I, but slightly simpler.  First multiply the equation by $N v^{(j)}$ and integrate to get the estimate 
\eqref{F2}.   The boundary terms in the various integrations by parts all vanish since 
each $v^{(j)}(\tau,0) = 0$.  Next write the equation as  $(\tau\del_\tau - \del_s^2) v^{(j)} = (\tau^2 h^{(j)} - N v^{(j)})$ 
square both sides and integrate, just as before. We handle the cross-term just as in region I. 
This proves that
\begin{equation}
\int (\tau \del_\tau v^{(j)})^2 + (\del_s^2 v^{(j)})^2 + N (\del_s v^{(j)})^2 + N^2 (v^{(j)})^2\, d\tau ds \leq C \int \tau^4 |h^{(j)}|^2\, d\tau ds.
\label{G1}
\end{equation}

Finally, let us turn to the estimates for $W = (w^{(j)})$. Recall the Mellin transform and its inverse
\[
\eta(\tau) \mapsto \calM(\eta)(\mu) := \tilde{\eta}(\mu) = \int_0^\infty \tau^{i\mu} \eta(\tau)\, \frac{d\tau}{\tau}, \qquad 
\eta(\tau) = \frac{1}{2\pi} \int \tilde{\eta}(\mu)\, d\mu.
\]
Taking the Mellin transform of the equation gives
\[
 (-\del_s^2 + (N + i\mu)) \tilde{w}^{(j)}(\mu,s) = 0,
\]
so, throwing out the exponentially growing solution, we find that
\begin{equation*}
\tilde{w}^{(j)}(\mu, s) = \tilde{b}_{j} e^{-\sqrt{N + i\mu} \, s}
\end{equation*}
for some $\tilde{b}_{j}(\mu)$ to be determined.  Note that $\tilde{w}^{(j)}(\mu,0)  = \tilde{b}_{j}$ and $\del_s \tilde{w}^{(j)}(\mu,0) = -\sqrt{N+i\mu} \, \tilde{b}_j$,
so if we set $b_{j}(\tau) = \calM^{-1}(\tilde{b}_{j})$, then 
\[
w^{(j)}(\tau,0) = b_j, \qquad \del_sw^{(j)}(\tau,0) = S_N b_j,
\]
where $S_N b := \calM^{-1} ( \sqrt{N+i\mu} \, \tilde{b}(\mu))$ is an invertible elliptic pseudodifferential operator of order $1/2$. 

Replacing $K - \beta(V)$ by $K$ for simplicity, the boundary conditions are:
\begin{align*}
\beta_0(W) & = (w^{(2)}(\tau,0) - w^{(1)}(\tau,0), w^{(3)}(\tau,0) - w^{(1)}(\tau,0)) = k_0 = (k_0', k_0''),\ \ \ \mbox{and}, \\
\beta_1(W) & = \sum_{j=1}^3 P^{(j)} \del_s w^{(j)}(\tau,0) = k_1. 
\end{align*}
Using these, we can determine the $b_j$ from $K$ as follows. Write $w^{(1)}(\tau,0) = b_1$ and $k_0 = (k_0',k_0'')$ (these primes do not denote
differentiation) so that $w^{(2)}(\tau,0) = b_2 = k_0' + b_1$, and 
$w^{(3)} (\tau,0)= b_3 = k_0'' + b_1$.   On the other hand, by a short calculation, the second boundary condition determines the unknown coefficient $b_1$ by
the relationship
\begin{equation*}
\mathbb P (b_1) =  S_N^{-1} k_1 - P_2 k_0' - P_3 k_0',
\end{equation*}
where $\mathbb P = \sum c_j P^{(j)}$. (Recall that $c_j = |\del_s \gamma^{(j)}(0,0)|^{-1}$.) To solve the problem completely, we need to 
show that $\mathbb P$ is invertible.  (This invertibility is of course precisely the Lopatinski-Shapiro condition for the linearized Herring 
boundary condition.)  In the special case where all $c_j = 1$, an explicit calculation using that the $P^{(j)}$ project onto three unit 
vectors making a mutual angle $2\pi/3$ yields
\[
\mathbb P = \begin{pmatrix}  3/4 & 0 \\ 0 & 3/4 \end{pmatrix}.
\]
For the general case, if some or all of the $c_j\neq 1$, we note that $\mathbb P = \sum c_j (P^{(j)})^2$, so $\mathbb P b_1 = 0$ implies
\[
\langle \mathbb P b_1, b_1\rangle = \sum c_j | P^{(j)} b_1|^2 = 0,
\]
hence $b_1 = 0$. 

\begin{remark}
The invertibility of the matrix $\mathbb P$ is precisely the Lopatinski-Shapiro condition for this boundary problem. 
\end{remark}

It remains to estimate the $L^2$ norm of $W$ and its derivatives.   We first carry this out assuming that $N = 1$, and after that obtain the final
estimates by a rescaling argument.   Let us focus on any one of the components $w^{(j)}$, and drop the superscript $j$ for the moment.
In the following we shall use the measures $d\tau/\tau$ and $(d\tau/\tau)ds$, which are more natural when using the Mellin
transform. We can convert back to the measures $d\tau$ and $d\tau  ds$ at the end of the proof if we replace
$w$ by $\tau^{1/2} w$, $N$ by $N+\frac12$, etc. 

By the Plancherel theorem, 
\begin{equation*}
||w||_{L^2}^2  = \int_{-\infty}^\infty \int_0^\infty |\tilde{w}(\mu,s)|^2 \, d\mu ds \leq C \int_{-\infty}^\infty  |\tilde{b}(\mu)|^2 
\left(\int_0^\infty \left|e^{- s \sqrt{1+i\mu}}\right|^2 \, ds\right) \, d\mu. 
\end{equation*}
Now $1 + i\mu = Re^{i\theta}$, where $R = \sqrt{1+\mu^2}$ and $\tan \theta = \mu$, so 
\[
\left| e^{-s \sqrt{1+i\mu}}\right|^2 = \exp \left(-2s (1+\mu^2)^{1/4} \cos(\theta/2)\right) \leq C \exp \left( -C s (1+\mu^2)^{1/4}\right),
\]
since $\cos(\theta/2) \to 1/\sqrt{2}$ as $|\mu| \to \infty$.  Integrating in $s$, we conclude that 
\[
||w||_{L^2}^2 \leq C \int_{-\infty}^\infty  |\tilde{b}(\mu)|^2 (1 + \mu^2)^{-1/4}\, d\mu = ||b||_{-1/4}^2,
\]
where $|| \cdot ||_{s}$ denotes the $H^s_b$ norm of the function $b(\tau)$.  For completeness, recall that 
\[
\|b||^2_s = \int |\tilde{b}(\mu)|^2 (1 + \mu^2)^s\, d\mu.
\]
In the special case $s = \ell \in \mathbb N$, 
\[
||b||_{\ell}^2 = \sum_{i = 0}^\ell \int |(\tau\del_\tau)^i b|^2\, \frac{d\tau}{\tau}.
\]
Since $b$ is a linear combination of $S_1^{-1}k_1$ and $k_0$, we conclude finally that
\[
||w||_{L^2}^2 \leq C( ||k_0||_{-1/4}^2 + ||k_1||_{-3/4}^2).
\]

A similar calculation can be done to estimate the $L^2$ norms of $\del_s w$, $\del_s^2 w$ and $\tau\del_\tau w$, leading  to the estimate
\begin{equation*}
\int |\tau\del_\tau w|^2 + |\del_s^2 w|^2 + |\del_s w|^2 + |w|^2  \leq C ( ||k_0||_{3/4}^2 + ||k_1||_{1/4}^2).
\end{equation*}
(The estimates for $w$ and $\del_s w$ require norms of $k_0$ and $k_1$ which are $1$ and $1/2$ orders weaker, respectively.) 

Now consider how these estimates depend on $N$.  Changing variables by $\hat{\tau} = \tau^N$
and $\hat{s} = \sqrt{N} s$ gives $\tau\del_\tau - \del_s^2 = N (\hat{\tau}\del_{\hat\tau} + 1 - \del_{\hat{s}}^2)$.
For want of better notation, write ${\bf w}(\hat{\tau}, \hat{s}) = w( \hat{\tau}^{1/N}, N^{-1/2} \hat{s})$. Then
\[
\widehat{\calL} {\bf w} (\hat{\tau}, \hat{s}) =  (\calL w)( \hat{\tau}^{1/N}, N^{-1/2} \hat{s}) = 0,
\]
and in addition
\[
{\bf w}(\hat{\tau}, 0) = b( \hat{\tau}), \quad  \del_{\hat{s}} {\bf w} (\hat{\tau}, \hat{s}) = N^{-1/2} (\del_s w)( \hat{\tau}^{1/N}, N^{-1/2} \hat{s}).
\]
Hence the boundary data appropriate for the problem in the $(\hat{\tau}, \hat{s})$ coordinate system is:
\[
\beta( {\bf W}) = ( {\bf k}_0(\hat{\tau}^{1/N}), {\bf k}_1(\hat{\tau}^{1/N})) = ( k_0( \hat{\tau}^{1/N}),  N^{-1/2} k_1(\hat{\tau}^{1/N}) ).
\]

It remains to compute how the various norms change under this rescaling.  For this we first note that
\begin{multline*}
\frac{d\hat{\tau}}{\hat{\tau}} d\hat{s} = N^{3/2} \frac{d\tau}{\tau} ds, \quad \del_{\hat{s}}{\bf w}( \hat{\tau}, \hat{s}) =N^{-1/2} (\del_s w)( \hat{\tau}^{1/N}, N^{-1/2} \hat{s}), 
\\ \del_{\hat{s}}^2{\bf w}( \hat{\tau}, \hat{s}) =N^{-1} (\del_s^2 w)( \hat{\tau}^{1/N}, N^{-1/2} \hat{s}),\ 
\hat{\tau}\del_{\hat{\tau}}{\bf w}( \hat{\tau}, \hat{s}) =N^{-1} ( \tau\del_\tau w)( \hat{\tau}^{1/N}, N^{-1/2} \hat{s}).
\end{multline*}
Thus
\[
\int |{\bf w}(\hat{\tau}, \hat{s})|^2\, \frac{ d\hat{\tau}}{\hat{\tau}} d\hat{s} =  \int |w(\hat{\tau}^{1/N}, N^{-1/2} \hat{s})|^2 \, \frac{ d\hat{\tau}}{\hat{\tau}} d\hat{s} = 
N^{3/2} \int |w(\tau,s)|^2\, \frac{d\tau}{\tau} ds,
\]
\[
\int | \del_{\hat{s}} {\bf w}(\hat{\tau}, \hat{s})|^2 \frac{ d\hat{\tau}}{\hat{\tau}} d\hat{s} = N^{1/2} \int |\del_s w(\tau,s)|^2\,  \frac{d\tau}{\tau}ds,
\]
\[
\int |\del_{\hat{s}}^2 {\bf w}(\hat{\tau}, \hat{s})|^2 \, \frac{ d\hat{\tau}}{\hat{\tau}} d\hat{s}  = N^{-1/2} \int |\del_s^2 w(\tau,s)|^2\, \frac{d\tau}{\tau}ds,
\]
and
\[
\int | \hat{\tau}\del_{\hat{\tau}} {\bf w}(\hat{\tau},\hat{s})|^2\, \frac{ d\hat{\tau}}{\hat{\tau}} d\hat{s}  = N^{-1/2} \int |\tau\del_\tau w(\tau,s)|^2\, \frac{d\tau}{\tau}ds.
\]
As for the boundary values, first note that if ${\bf k}(\hat{\tau}) = k( \hat{\tau}^{1/N})$, then
\[
\int_0^\infty k(\hat{\tau}^{1/N}) \hat{\tau}^{i\mu} \, \frac{d\hat{\tau}}{\hat{\tau}} = N \int_0^\infty k(\tau) \tau^{iN\mu}\, \frac{d\tau}{\tau},
\]
or in other words,
\[
\calM( {\bf k})(\mu)  = N \calM(k)( N\mu).
\]
Therefore,
\begin{multline*}
||{\bf k}_0||^2_{3/4} = \int |\tilde{{\bf k}}_0|^2 (1 + \mu^2)^{3/4}\, d\mu = N^2 \int |\tilde{k}_0(N \mu)|^2 (1 + \mu^2)^{3/4} \, d\mu \\
= N \int |\tilde{k}_0(\nu)|^2 (1 + N^{-2} \nu^2)^{3/4}\, d\nu \leq N \int |\tilde{k}_0(\nu)|^2 (1 + \nu^2)^{3/4} \, d\nu,
\end{multline*}
and 
\[
||{\bf k}_1||^2_{1/4} =  N \int |\tilde{k}_1(N\mu)|^2 (1 + \mu^2)^{1/4}\, d\mu \leq \int |\tilde{k}_1(\nu)|^2 (1 + \nu^2)^{1/4}\, d\nu.
\]
We used here that if $a > 0$, then 
\[
(1 + N^{-2}\nu^2)^a = N^{-2a}( N^2 + \nu^2)^a = N^{-2a}(1 + \nu^2)^a  \left( \frac{N^2 + \nu^2}{1 + \nu^2} \right)^a \leq (1 + \nu^2)^a.
\]
Multiplying all terms by $N^{1/2}$, we derive, at long last, the final estimate
\begin{equation*}
\int (\tau\del_\tau w)^2 + (\del_s^2 w)^2 + N (\del_s w)^2 + N^2 |w|^2 \leq C \left( N^{3/2} ||k_0||_{3/4}^2 + N^{1/2} ||k_1||_{1/4}^2\right).
\end{equation*}

Recall that we had replaced $k_1 - \beta_1(V)$ by $k_1$. This means that  the correct full estimate in region III is
\begin{equation}
\int (\tau\del_\tau u)^2 + (\del_s^2 u)^2 + N (\del_s u)^2 + N^2 |u|^2 \leq C \left( N^{3/2} ||k_0||_{3/4}^2 + N^{1/2} ||k_1||_{1/4}^2 + 
||\tau^2 h||_{L^2}^2\right).
\label{G4}
\end{equation}
Again we remind that this is with respect to the measures  $d\tau  ds/\tau$ and $d\tau/\tau$, but we can  obtain  an
identical-looking estimate by replacing $u$ by $\sqrt{\tau} u$. 

\medskip

We are finally in a position to define the spaces $\calX$, $\calY$ and $\calZ$.   First, the measures on the right hand sides 
of \eqref{F5}, \eqref{E0}, \eqref{G1} and \eqref{G4} are
\[
s^2 \tau^4 d\tau ds,\ \ x^2 dt dx, \ \ \mbox{and}\ \  \tau^4 d\tau ds;
\]
the first of these holds in the region where $s \geq s_0$ and the latter when $s \leq s_0$.  After changing coordinates, these
measures are all equivalent to
\[
d\mu_{\calY} = (1 + s^2)\tau^4\, d\tau ds \cong (t + x^2) \, dt dx;
\]
either of these expressions localize to the correct measure in each of the regions. 

Next, the space $\calX$ is defined by the ($N$-dependent) conditions that 
\begin{multline*}
||u||_{\calX}^2 := \int (\tau \del_\tau u)^2 + (\del_s^2 u)^2 + (N+s^2)(\del_s u)^2 + (Ns^2 + N^2) u^2 \, d\tau ds  \\
\cong \int t (\del_t u)^2 + t (\del_x^2 u)^2 + (N + \frac{x^2}{t})(\del_x u)^2 + (N \frac{x^2}{t^2} + N^2\frac{1}{t}) u^2\, dt dx < \infty.
\end{multline*}

Finally, $\calZ = \calZ_0 \oplus \calZ_1$ is defined as $H^{3/4}_b$ and $H^{1/4}_b$, the fractional Sobolev
spaces on $\mathbb R^+$ defined in terms of the vector field $\tau \del_\tau$, and with respect to the measures 
$N^{3/2}d\tau$ and $N^{1/2} d\tau$, respectively.

With these very extensive calculations, we can now have the
\begin{theorem}
The mapping
\begin{equation*}
\begin{aligned}
(t\del_t  - t \calL, \beta) :  \  t^N \calX & \longrightarrow t^N\calY \oplus t^N\calZ  \\
U &\longmapsto  ( (t\del_t - t \calL) U, \beta(U)) 
\end{aligned}
\end{equation*}
is an isomorphism when $N$ is sufficiently large.
\label{linexist}
\end{theorem}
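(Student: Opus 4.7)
The plan is to reduce to a uniform a priori estimate (essentially already assembled from the regional computations above) and then invoke a continuity argument for surjectivity. First I would conjugate by $t^N$: writing $U = t^N u$, the system becomes
\[
(t\del_t + N - t\calL) u = h, \qquad \beta(u) = k,
\]
with $u \in \calX$, $h \in \calY$, $k \in \calZ$. So it suffices to show that this conjugated operator is an isomorphism $\calX \to \calY \oplus \calZ$ for $N$ large.

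The main step is to patch the three model estimates \eqref{F5}, \eqref{E4}, and \eqref{G4}, together with a straightforward interior estimate (in a relatively compact set in the interior of $Q_h \sqcup P_h$, where the operator is uniformly parabolic and standard Schauder/energy methods apply), into a global a priori estimate. Choose a smooth partition of unity $\{\phi_\alpha\}$ on $Q_h \sqcup P_h$ subordinate to a finite cover by neighborhoods of four types: (I) near $\ff \cap \bff$, (II) near $\bff$ away from both $\ff$ and $\lf$, (III) near $\ff \cap \lf$ (chosen compatibly with the triod structure so each triple of matching left faces is covered jointly), and (IV) a relatively compact interior region. For each $\alpha$, $\phi_\alpha u$ satisfies the relevant model equation modulo commutator terms $[t\calL, \phi_\alpha] u$ and analogous boundary commutators. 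These commutators involve at most first-order derivatives of $u$ multiplied by bounded factors of $t$ or $\tau$, and hence are controlled by lower-weight terms that the $N$-weighted pieces on the left of \eqref{F5}, \eqref{E4}, \eqref{G4} dominate for $N$ sufficiently large. Summing over $\alpha$, and checking that the local measures $s^2 \tau^4\, d\tau ds$, $x^2\, dt dx$, and $\tau^4\, d\tau ds$ all agree (up to bounded factors) with the global measure $(t+x^2)\, dt dx$ defining $\calY$, yields
\[
\|u\|_\calX \leq C\bigl(\|h\|_\calY + \|k\|_\calZ\bigr),
\]
with $C$ independent of $u$ (but depending on $N$). This gives injectivity and closedness of the range at once.

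For surjectivity I would use the method of continuity. Connect $L_1 := (t\del_t + N - t\calL, \beta)$ to a model operator $L_0$ via $L_\sigma = (1-\sigma) L_0 + \sigma L_1$, where $L_0$ is obtained by freezing coefficients at each interior vertex (so that each arc's equation decouples into the model $\tau\del_\tau + N - \del_s^2 - s\del_s$ and the boundary operator becomes a constant-coefficient Herring projection). The patching argument above applies uniformly in $\sigma \in [0,1]$ because the principal symbol, the Lopatinski--Shapiro property for the Herring condition, and the structure of the $N$-weights are preserved along the homotopy. For $L_0$, the Mellin transform computation carried out in Region III constructs an explicit inverse (on each decoupled arc, combined with the invertibility of the matrix $\mathbb P$), so $L_0$ is surjective. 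The continuity method then promotes surjectivity to all $L_\sigma$, and in particular to $L_1$. As an alternative, one could solve the problem on $\{t \geq \epsilon\}$ by standard parabolic theory (using the Lopatinski--Shapiro property noted in the remark of \S 2) and pass to the limit $\epsilon \searrow 0$ using the uniform a priori estimate; we get a solution in $\calX$, which is automatically in $t^N \calX$ after undoing the conjugation.

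The main obstacle I anticipate is the bookkeeping in the patching: verifying that commutator terms produced by the cutoffs are genuinely of lower order in the scale of the norms used, and that the measures, weights, and $N$-powers align consistently as one crosses between coordinate charts $(\tau,s)$ and $(t,x)$. A secondary subtlety is the treatment of $\phi_\alpha$ near $\ff \cap \lf$, where the cutoff must be chosen so that it is locally constant in the directions transverse to the triod matching (otherwise the boundary commutator $[\beta, \phi_\alpha]$ is nonzero and must itself be estimated in $\calZ$). With these choices in place, however, the estimate and the continuity argument go through and Theorem~\ref{linexist} follows.
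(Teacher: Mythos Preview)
Your plan is essentially the paper's own argument: conjugate by $t^N$, patch the regional model estimates into a global a priori bound (handling commutator terms via the large-$N$ weights), and deduce injectivity and closed range. For surjectivity the paper takes the simpler route you list as an alternative: since the spaces are $L^2$-based and the range is closed, it suffices to note that standard parabolic theory makes the operator surjective on each region $\{t \geq \epsilon\}$, hence the range is dense. Your primary proposal via the method of continuity is a reasonable variant, but note a small mismatch: the Region~III Mellin computation inverts the model $\tau\del_\tau + N - \del_s^2$ (no $s\del_s$ term, $s$ bounded), not the full frozen model $\tau\del_\tau + N - \del_s^2 - s\del_s$ on the half-line that you take for $L_0$, so the explicit invertibility of $L_0$ is not quite what was computed; the density argument avoids this issue entirely.
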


We sketch the proof briefly.  First, recall that the mapping here is equivalent to
\[
(t\del_t + N - t\calL, \beta): \ \calX \longrightarrow \calY \oplus \calZ,
\]
so we consider this latter operator instead.  Next, if $N$ is sufficiently large, we may use the model estimates proved 
above to obtain the corresponding estimate for this operator on $\widehat{\Gamma}$. This is done by the usual 
process of approximation, passing from constant coefficient to variable coefficient operators, and then 
pasting together these local a priori estimates to get
\[
||U||_{\calX} \leq C ( || t\del_t +N - t\calL)U||_{\calY} + ||\beta(U)||_{\calZ})
\]
on the entire network. The constants in this estimate depend only on $\calC^0$ norms of the coefficients of 
the operator, and in particular are invariant under continuous deformation of the approximate solution and the network. 

This estimate implies, first of all, that any solution to $(t\del_t +N - t\calL)U = H$, $\beta(U) = K$, if it exists, must be unique.
In addition, the range of this boundary problem is closed, so the theorem will be proved if we can show that its range
is dense. However, this is straightforward: the spaces here are all $L^2$-based, and by standard parabolic theory, the
range of this operator in any region $\{t \geq \epsilon\}$ is onto.  

We next define spaces suitable for the higher regularity version. If $X$ is any manifold with corners, we define $\calV_b(X)$ to 
be the space of all smooth vector fields on $X$ which are tangent to all boundary faces. Thus, for example, $\calV_b(Q_h)$ is 
spanned over $\calC^\infty(Q_h)$ by $\tau\del_\tau$ and $s\del_s$ near $\lf \cap \ff$  and by $y\del_y$ and $T\del_T$ near 
$\bff \cap \ff$; similarly, $\calV_b(\RR^+)$ is spanned over $\calC^\infty(\RR^+)$ by $t\del_t$. These spaces of vector fields are 
closed under Lie bracket, and are the natural starting point in the analysis of many classes of degenerate differential 
operators on such spaces.  In particular, note that $t\del_t - t\calL$ is a sum of products of elements of $\calV_b(Q_h \cup P_h)$.
Their main use here is based on the fact that if $V \in \calV_b$, then the commutator $[V, t\del_t - t\calL]$ is of the form 
$t\del_t - t\calL'$ where $\calL'$ is an operator of exactly the same type as $\calL$.  Thus 
\[
V (t\del_t - t\calL) U =  (t\del_t - t\calL') (VU) = V(tH) = t (VH + [V,t] H) = tH',
\]
with a similar expression for the boundary values. 

Now for any $\ell \in \NN$, define 
\[
\calX^k = \{U \in \calX:  V_1 \ldots V_j U \in \calX\ \forall\, j\leq k\ \mbox{and}\ V_i \in \calV_b\},
\]
with similar definitions for $\calY^k$ and $\calZ^k$.   Applying a standard commutator argument and the same conjugation
trick as above, we can reduce the analysis of $(t\del_t  - t\calL, \beta): t^N \calX^\ell \longrightarrow t^N\calY^\ell \oplus 
t^N\calZ^\ell$ to that of a similar operator $(t\del_t  + N - t \calL', \beta): \calX \longrightarrow \calY \oplus \calZ$,
and for this we obtain estimates and an existence and uniqueness theorem exactly as above.  

\begin{corollary}
For every $\ell \geq 0$ and $N \gg 1$, 
\[
(t\del_t - t\calL, \beta):  t^N \calX^{\ell} \longrightarrow t^N \calY^{\ell} \oplus t^N \calZ^{\ell} 
\]
is an isomorphism.    

In particular, if $H$ and $K$ are smooth and vanish to infinite order, then there exists a solution $U$ to this
problem which is also smooth and vanishes to all orders. This solution is unique. 
\label{finlinest}
\end{corollary}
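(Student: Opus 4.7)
The plan is to proceed by induction on $\ell$, taking Theorem~\ref{linexist} as the base case $\ell = 0$. The key structural fact, already noted in the preamble to the corollary, is that $t\del_t - t\calL$ is an operator of ``$b$--type'' in the sense that it is a polynomial expression in elements of $\calV_b(Q_h \sqcup P_h)$ with smooth coefficients; consequently, for every $V \in \calV_b$, the commutator $[V, t\del_t - t\calL]$ is again an operator of the form $t\del_t - t\calL'$ with $\calL'$ of the same type as $\calL$. If $U$ solves $(t\del_t - t\calL)U = H$, applying $V$ and moving the commutator across gives
\[
(t\del_t - t\calL)(VU) = VH - [V, t\del_t - t\calL]\,U,
\]
whose right-hand side lies in $t^N \calY^{\ell-1}$ whenever $H \in t^N \calY^\ell$ and $U \in t^N\calX^\ell$. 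Iterating this identity with $k \leq \ell$ factors of vector fields from $\calV_b$ reduces the $\calX^\ell$--estimate for $U$ to the $\ell = 0$ estimate for $V_1 \cdots V_\ell U$, modulo lower-order terms which can be absorbed on the left by choosing $N = N(\ell)$ large enough.

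The technical heart of the induction is checking that the boundary operator $\beta$ behaves well under commutation with $\calV_b$. The generators $\tau\del_\tau$ and $s\del_s$ (near $\ff \cap \lf$), together with $y\del_y$ and $T\del_T$ (near $\ff \cap \bff$), are tangent to $\lf$ and $\rf$, so for such $V$ one has $\beta(VU) = V\beta(U) + (\text{zeroth order in }U)$, the zeroth-order piece being controlled by the inductive hypothesis on $U$. Transverse derivatives, which would arise if one wished to commute $\del_s$ itself through $\beta$, can be eliminated by using the equation $\del_s^2 U = \tau\del_\tau U - s\del_s U + \tau^2 H + \text{(l.o.t.)}$ near $\ff$ to trade a transverse second derivative for tangential data. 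This is the step that most directly tests the degenerate structure at $\ff$ and should be the principal obstacle, though it follows a standard pattern for $b$--type calculus.

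With these commutator and trace identities in hand, the rest of the argument proceeds as in the proof of Theorem~\ref{linexist}: conjugation by $t^N$ converts the problem to an operator of the form $(t\del_t + N - t\calL', \beta)\colon \calX \to \calY \oplus \calZ$, the a priori estimates already proved in \S\ref{subsection:linearized operator} then apply with the same conclusions provided $N$ is taken large enough depending on $\ell$, and surjectivity follows from the resulting closed-range estimate together with the density of the range furnished by the non-degenerate parabolic theory on $\{t \geq \varepsilon\}$. Uniqueness is inherited directly from the $\ell = 0$ case.

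For the final ``in particular'' assertion, suppose $H$ and $K$ are smooth and vanish to infinite order at every boundary of $Q_h \sqcup P_h$. Then $(H,K) \in t^N(\calY^\ell \oplus \calZ^\ell)$ for every $\ell \geq 0$ and every $N \geq 0$, so by uniqueness the solutions obtained for different parameter values all coincide, producing a single $U$ lying in $\bigcap_{\ell, N} t^N \calX^\ell$. A standard $b$--Sobolev embedding upgrades membership in $\calX^\ell$ for all $\ell$ into $\calC^\infty$ smoothness on $Q_h \sqcup P_h$, while the $t^N$ factor for arbitrary $N$ yields the claimed vanishing to infinite order at every boundary face.
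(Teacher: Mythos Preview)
Your proposal is correct and follows essentially the same approach as the paper. The paper's own argument (contained in the paragraph immediately preceding the corollary) is extremely terse: it simply says that ``applying a standard commutator argument and the same conjugation trick as above'' reduces the $\calX^\ell$ problem to an $\calX$-level problem for a modified operator $(t\del_t + N - t\calL', \beta)$, and your induction on $\ell$ together with the commutator identity $(t\del_t - t\calL)(VU) = VH - [V, t\del_t - t\calL]U$ is precisely what this means in practice. Your additional remarks on how $\beta$ interacts with $\calV_b$ and how to recover transverse regularity from the equation are reasonable elaborations that the paper leaves implicit.
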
   

\section{Short-time existence}\label{short-time existence}
We now have all the tools in place to prove our main result:

\begin{theorem}\label{existence-uniqueness-special-flow}
Fix an initial network $\Gamma_0 = \{\gamma_0^{(j)}\}_{j=1}^n$ which consists of $n$ smooth
curves $\gamma_0^{(j)}\in C^{\infty}([0,1])$, $j = 1, \ldots, n$. Suppose that at each interior vertex $p$, there are $m_p$ curves
which come together there, and that these intersections are all nontangential.  For each interior vertex $p$, choose an 
expanding self-similar solution $S_p$. Then there exists a family of networks $\Gamma(t)$  which are regular for each $t > 0$, 
and which vary smoothly in the sense that the worldsheet $\{ (t, \Gamma(t)): 0 \leq t < \epsilon\}$ is the imagine of a 
smooth function with domain $Q_h$.  The topology of $\Gamma(t)$ for $t > 0$ is obtained by excising a small
ball around each interior vertex $p$ and inserting a truncated copy of $S_p$.   For a given topology, the solution is unique. 
\end{theorem}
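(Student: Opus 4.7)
The plan is to write the sought-after solution as $\Gamma = \widehat{\Gamma} + U$, where $\widehat{\Gamma}$ is the approximate solution built in \S 5 (which realizes the prescribed soliton $S_p$ at each irregular vertex $p$ and satisfies $\bbM(\widehat{\Gamma}) = -f$, $B(\widehat{\Gamma}) = 0$ with $f$ smooth on $Q_h \sqcup P_h$ and vanishing to infinite order at every boundary face). Expanding around $\widehat{\Gamma}$ exactly as in \S 6, the nonlinear system becomes
\begin{equation*}
(t\del_t - t\calL) U = f + Q_i(U), \qquad \beta(U) = S_b(U),
\end{equation*}
where $Q_i, S_b$ are smooth nonlinear remainders vanishing quadratically in $(U, \del_x U)$.

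First I would fix large integers $\ell$ and $N$ and invoke Corollary \ref{finlinest} to obtain the bounded inverse
\begin{equation*}
G := (t\del_t - t\calL, \beta)^{-1}: t^N(\calY^\ell \oplus \calZ^\ell) \longrightarrow t^N \calX^\ell,
\end{equation*}
and then restate the problem as the fixed-point equation $U = \Phi(U) := G(f + Q_i(U), S_b(U))$. Existence then reduces to running a Banach contraction argument for $\Phi$ on a small closed ball $B_r = \{ U : \|U\|_{t^N \calX^\ell} \leq r\}$ and on a sufficiently short time interval $[0,T)$.

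The main analytic point, and the step I expect to be hardest, is verifying the contraction and self-mapping property. Because $\widehat{\Gamma}$ is a smooth map into $Z_h$ whose leading part at each front face is a regular (hence non-degenerately parametrized) soliton, the denominators $|\del_x \widehat{\gamma}^{(j)}|^2$ are bounded below uniformly on $Q_h \sqcup P_h$, and consequently $Q_i$ and $S_b$ are smooth functions of their arguments. The $\calX^\ell$ norm controls enough parabolically-scaled mixed derivatives to allow Sobolev embedding into $L^\infty$ on the relevant interval, so one obtains schematic estimates of the form
\begin{equation*}
\|Q_i(U) - Q_i(U')\|_{t^N \calY^\ell} + \|S_b(U) - S_b(U')\|_{t^N \calZ^\ell} \leq C(r)\, T^{\alpha}\, \|U - U'\|_{t^N \calX^\ell}
\end{equation*}
for some $\alpha > 0$ and all $U, U' \in B_r$; the positive power of $T$ comes from the factor of $t^N$ built into the target spaces together with the quadratic nature of the remainders and the fact that $U, U'$ are themselves $O(t^N)$. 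The inhomogeneous term $\|f\|_{t^N \calY^\ell}$ can likewise be made arbitrarily small by shrinking $T$, because $f$ vanishes to infinite order at $t=0$. With $r$ small and $T$ chosen smaller still, $\Phi$ maps $B_r$ into itself and is a strict contraction, producing a unique fixed point $U$.

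For smoothness and the precise mode of convergence, I would bootstrap: once $U \in t^N \calX^\ell$ is known for a single $\ell$, commuting any $V \in \calV_b(Q_h \sqcup P_h)$ through the equation produces a linear system of the same type with right-hand side in one lower Sobolev level, and re-applying Corollary \ref{finlinest} at successively higher $\ell$ places $U$ in $\bigcap_\ell t^N \calX^\ell \subset t^N \calC^\infty(Q_h \sqcup P_h)$. Hence $\Gamma = \widehat{\Gamma} + U$ is smooth on the blow-up, and its restriction to $\tau = 0$ on the front face over each irregular vertex $p$ coincides with the chosen soliton $S_p$; this is precisely the asserted convergence $\Gamma(t) \to \Gamma_0$ as $t \searrow 0$. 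Uniqueness within a fixed topology of flowout follows from the uniqueness clause of Corollary \ref{finlinest}: given two such solutions $\Gamma, \Gamma'$ with identical combinatorics and identical chosen solitons, the recursive construction of \S 5 produces the same formal Taylor expansion at every face of $Q_h \sqcup P_h$ up to the finite-dimensional parametrization freedom built into Definition \ref{solutions}, so after adjusting parametrizations the two solutions differ by a correction in $t^N \calX^\ell$ solving a linear system with zero data, which must vanish.
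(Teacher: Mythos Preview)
Your proposal is correct and follows essentially the same approach as the paper: both write $\Gamma = \widehat{\Gamma} + U$, expand the nonlinear operators around the approximate solution of \S 5, invoke the linear solution operator of Corollary \ref{finlinest}, and close by a contraction argument in $t^N\calX^\ell$, with uniqueness following because two smooth flowouts of the same topology have identical formal expansions at every face of $Q_h \sqcup P_h$ and hence differ by an element of $t^N\calX^\ell$ solving the homogeneous problem. The paper obtains the smallness in the contraction estimate from the observation that quadratic remainders map $t^N\calX^\ell$ into $t^{2N}\calX^\ell$, giving an extra factor $t_0^N$ on $[0,t_0)$, which is exactly your $T^\alpha$ mechanism.
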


\begin{proof}
Construct an approximate solution $\widehat{\Gamma}(t) = \{ \hat{\gamma}^{(i)}\}_{i=1}^m$; as per our
earlier convention, suppose that each $\hat{\gamma}^{(j)}$, $j \leq n$, limits to $\gamma^{(j)}_0$, whereas
the $\hat{\gamma}^{(j)}$ with $j > n$ are the curves which disappear into the vertices as $t \to 0$. 
By abuse of language, we consider each $\hat{\gamma}^{(j)}$ as a map from $Q_h$ into $Z_h$.

We seek a family of perturbations $U = \{u^{(j)}\}$, where each $u^{(j)}$ is a function on $Q^{(j)}$ or $P^{(j)}$, as appropriate, 
and this collection of functions is chosen so that the flow equations and boundary conditions hold, i.e., 
\begin{equation*}
\bbM( \widehat{\Gamma} + U) = 0,  \qquad B( \widehat{\Gamma} + U) = 0.
\end{equation*}
We approach this in the standard way. Expand the nonlinear operators $\calM$ and $B$ around the approximate solution,
and denote their linearizations by $\del_t - \calL$ and $\beta$, respectively, as we have done in the first part of
\S 6.  Now set $H = -\calM( \widehat{\Gamma})$, $J = - B(\widehat{\Gamma})$.  We can then write this nonlinear
initial-boundary problem as
\begin{equation}
(\del_t - \calL) U = H + Q(U), \quad \beta(U) = J + S(U),
\label{nonlin-Taylor}
\end{equation}
where $Q$ and $S$ are the quadratically vanishing remainder terms in these two Taylor approximations.

Our careful choice of approximate solution ensures that $H$ and $J$ are smooth and vanish to all orders as $t \to 0$.
We therefore seek to find a solution $U$ which is also smooth and vanishing to all orders.  
Choose $\ell$ large enough so that multiplication $t^N \calX^\ell \times t^N \calX^\ell \to t^{2N} \calX^\ell$ is bounded. 
Next, suppose that we are solving the problem on some interval $0 \leq t < t_0$.  If $Q$ is any quadratically vanishing
expression in the components of $U$ and $N \geq 2$, then 
\begin{equation}
|| Q(U) ||_{ t^N \calX^\ell} \leq  t_0^N ||U||_{t^N \calX^\ell}
\label{vanish}
\end{equation}
for any $U \in t^N \calX^{\ell}$. 

Now let $\mathbb H$ denote the solution operator for $(t\del_t - t\calL, \beta)$, as in Theorem \ref{linexist}, and
rewrite \eqref{nonlin-Taylor} as the integral equation. 
\begin{equation}
U = \mathbb H( H + Q(U),  J + S(U)).
\label{inteq}
\end{equation}
It is now quite standard, using Corollary \ref{finlinest} and taking advantage of \eqref{vanish}, to check that
\eqref{inteq} is a contraction mapping.  Hence there is a unique solution $U$, and clearly $U$ is smooth
and vanishes to all orders as $t \to 0$.

This completes the proof of short-time existence.  

As for uniqueness, notice that if two solutions $\Gamma(t)$ and $\Gamma'(t)$ are two smooth flowouts from the
same initial network $\Gamma_0$ and both have the same topology, then all Taylor coefficients of the constituent
curves $\gamma^{(j)}(t,x)$ and $\gamma^{(j)\prime}(t,x)$ must agree.  This means that the differences
$\gamma^{(j) \prime} - \gamma^{(j)}$ necessarily vanish to all orders as $t \to 0$.  By the uniqueness inherent
in the contraction mapping step of the argument, this implies that $\gamma^{(j)}(t,x) \equiv \gamma^{(j)\prime}(t,x)$. 
\end{proof}

\begin{proposition}\label{C2datum}
Theorem~\ref{existence-uniqueness-special-flow} also holds for $\calC^2$ irregular networks $\Gamma_0=\{\gamma_0^{(j)}\}_{j=1}^n$.  
\end{proposition}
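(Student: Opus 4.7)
The plan is to reduce the $\calC^2$ case to the smooth case already handled in Theorem~\ref{existence-uniqueness-special-flow} by an approximation argument. First, I would regularize $\Gamma_0$ to a sequence of \emph{smooth} networks $\Gamma_0^{(k)}=\{\gamma_0^{(j,k)}\}_{j=1}^n$ with the same combinatorial type as $\Gamma_0$, agreeing with $\Gamma_0$ at each exterior vertex and at each interior vertex \emph{in position and unit tangent vector}, and with $\gamma_0^{(j,k)}\to \gamma_0^{(j)}$ in $\calC^2([0,1])$ as $k\to\infty$. Such a family can be produced by mollifying the arcs away from endpoints and interpolating against a fixed partition of unity so that the $2$-jets at the endpoints are preserved exactly. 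Because the tangent vectors at each interior vertex are unchanged, the fan $\calF_i$ is the same for every $k$, so the same expanding soliton $S_{p_i}$ chosen for $\Gamma_0$ may be used for all approximants. Theorem~\ref{existence-uniqueness-special-flow} then produces a unique smooth flowout $\Gamma^{(k)}(t)$ on some interval $[0,T_k)$ with the prescribed topology.

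The central task is to establish a uniform lower bound $T_k\ge T_*>0$ and to pass to the limit. For this I would revisit the contraction mapping in \S\ref{short-time existence}. The approximate solution $\widehat\Gamma^{(k)}$ of \S 5 now has Taylor coefficients at $\bff$ built from the derivatives of $\gamma_0^{(j,k)}$; higher derivatives of these coefficients may blow up with $k$, but only a fixed finite number of them enter the argument provided we work in weighted parabolic Sobolev (or Hölder) spaces $\calX^\ell, \calY^\ell, \calZ^\ell$ of \emph{finite} regularity $\ell$. The linear estimates of \S \ref{subsection:linearized operator} are local in character: near $\ff$ the behavior is dictated by the fixed soliton $S_{p_i}$ (independent of $k$), while near $\bff$ and in the interior of $Q_h\cup P_h$ the coefficients of the linearized operator depend continuously on finitely many derivatives of $\widehat\Gamma^{(k)}$, which are uniformly bounded in terms of $\|\gamma_0^{(j,k)}\|_{\calC^2}$ and the fixed soliton data. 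Hence Theorem~\ref{linexist} and Corollary~\ref{finlinest} furnish a solution operator $\mathbb H^{(k)}$ with operator norm bounded uniformly in $k$. The residual $H^{(k)}=-\bbM(\widehat\Gamma^{(k)})$ vanishes to the appropriate finite order at $\bff$ with a bound depending only on the $\calC^2$ data and the solitons, and the quadratic remainder $Q(U)$ satisfies an estimate analogous to \eqref{vanish} with uniform constants. Standard arguments then yield $T_*>0$ and a uniform bound on $U^{(k)}$ in $t^N\calX^\ell$, hence on $\Gamma^{(k)}$.

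Having obtained uniform existence, I would pass to the limit. On each $[\delta,T_*]$ with $\delta>0$, interior parabolic regularity for regular networks (as in~\cite{BrRe,MNT}) gives smooth $k$-uniform bounds, and on a neighborhood of the front faces the uniform weighted Sobolev bounds from the previous step suffice. By Arzelà--Ascoli one extracts a subsequence $\Gamma^{(k_j)}$ converging on $Q_h\sqcup P_h$ to a family $\Gamma(t)$ which solves the network flow on $(0,T_*)$, is smooth there, converges to $\Gamma_0$ in $\calC^2$ as $t\searrow 0$, and has the tangent soliton $S_{p_i}$ at each irregular vertex. Uniqueness for a prescribed topology follows as in Theorem~\ref{existence-uniqueness-special-flow}: given two $\calC^2$ flowouts with the same soliton at each vertex, their difference vanishes to the required finite order at $t=0$ and satisfies a linear equation to which the contraction principle (in $\calX^\ell$ for the same $\ell$ as above) applies.

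The main obstacle is bookkeeping: rewriting the estimates of \S \ref{subsection:linearized operator} so that the constants depend only on finitely many derivatives of the approximate solution, and verifying that a single value of $\ell$ works both for the contraction (one needs $\calX^\ell$ to be a multiplicative algebra) and for the uniform bound on $\widehat\Gamma^{(k)}$ in terms of the $\calC^2$ norm of $\Gamma_0^{(k)}$. Because the arguments of \S\S 5--7 are essentially local and tame in the transverse regularity at $\bff$, this reduction poses no structural difficulty, but some care is required in the construction of the regularization $\Gamma_0^{(k)}$ to guarantee that tangents at vertices are matched exactly so that the fans, and hence the solitons, remain fixed throughout the approximation.
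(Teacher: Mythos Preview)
Your approach is essentially the same as the paper's: approximate $\Gamma_0$ in $\calC^2$ by smooth networks with the same fans at each vertex, apply Theorem~\ref{existence-uniqueness-special-flow} to each approximant, exploit that the constants in the linear estimates of \S\ref{subsection:linearized operator} depend only on the $\calC^0$ norms of the coefficients of the linearized operator (which are in turn controlled by the $\calC^2$ norm of the initial data), obtain a uniform time of existence and uniform bounds on the correction $U^{(k)}$, and pass to a (weak) limit. The paper carries out the contraction in $t^N\calX^0$ rather than $t^N\calX^\ell$, but this is not a substantive difference.

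However, your uniqueness argument has a genuine gap. The uniqueness proof in Theorem~\ref{existence-uniqueness-special-flow} relies on the fact that two smooth flowouts with the same topology must have \emph{all} Taylor coefficients agreeing at every boundary face of $Q_h\sqcup P_h$, so that their difference lies in $t^N\calX$ for \emph{every} $N$; only then does the contraction (which requires $N\gg 1$) force the difference to vanish. For $\calC^2$ initial data only finitely many Taylor coefficients at $\bff$ are determined, and there is no reason two flowouts should agree to the high order $N$ needed to place their difference in the space where the contraction applies. Your sentence ``their difference vanishes to the required finite order'' elides exactly this point: the finite order available from $\calC^2$ matching need not be the order required. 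The paper itself does \emph{not} assert uniqueness in this Proposition, and remarks immediately afterward that uniqueness, while highly likely, is left open.
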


\begin{proof}
We prove this by taking a $\calC^2$ approximation of $\Gamma_0$ by a sequence of $\calC^\infty$ networks $\Gamma_{0,\ell}$,
and showing that the local existence theorem around any of these is valid in a fixed-sized neighborhood in the space
$\calX^0$. For $\ell$ large enough, this neighborhood includes an evolving network with initial condition $\Gamma_0$. 

First consider the approximate solution construction, where to each smooth initial network $\Gamma_{0,\ell}$ we assign
a flowout $\widehat{\Gamma}_\ell$ of some fixed topology.  It is possible to choose these approximate solutions
to depend continuously in the $\calC^\infty$ topology on the initial network. Indeed, the coefficient functions in the series
expansion at each front face clearly depend continuously on the derivatives of each $\gamma_{0,\ell}^{(j)}$ at $x=0$ or $x=1$.
We then use the Seeley extension theorem, which is a quantitative form of the Borel lemma. It states specifically that
the extension map, carrying the coefficients to all orders of these expansions at each boundary face of $Q_h \sqcup P_h$ 
to the approximate solution $\widehat{\Gamma}_\ell$, can be chosen to depend continuously on these coefficients. 

Now turn to the contraction argument. We may as well work in some fixed space $t^N \calX^0$. We seek a solution 
$U_\ell$ to  $\mathbb M( \widehat{\Gamma}_\ell + U_\ell) = 0$, $B(\widehat{\Gamma}_\ell + U_\ell) = 0$, which we
set up as the equivalent integral equation \eqref{inteq}.   The usual argument shows that this is a contraction on a ball
of size $R_0$ in the space $t^N \calX^0$, where the radius $R_0$ depends on the norm of the solution operator $\mathbb H_\ell$
on $[0,t_0)$.  In particular, this proof also shows that $t_0$ and hence $R_0$ can be chose to be uniform in $\ell$. 

Recalling that each arc in $\Gamma_{0,\ell}$ is $\calC^2$, we see that the coefficients in the linearized operators $\calL_\ell$ 
are convergent in the $\calC^0$ topology.   We have remarked that the constants in the main a priori estimate leading
to the fact that $\calL_\ell: t^N \calX \to t^N \calY \oplus t^N \calZ$ depend only on the size in $\calC^0$ of the
coefficients of $\calL_\ell$, and these are uniformly controlled.   Thus for this particular sequence of smooth
initial networks $\Gamma_{0,\ell}$, the solution $\widehat{\Gamma}_\ell + U_\ell$ exists and is unique in a ball 
around $\widehat{\Gamma}_\ell$ of fixed radius $R_0$. Furthermore, the $t^N \calX^0$ norm of $U_\ell$ is
uniformly controlled. 

We now take a weak limit of these solutions $U_\ell$ to some $U \in t^N \calX^0$. By classical parabolic regularity
theory, this convergence can be assumed to be strong in the $\calC^\infty$ topology in any region $t > \epsilon$. 
This convergence is also in $\calC^2$ on the bottom face, i.e., as $t \searrow 0$ but $x \neq 0,1$, and in $\calC^\infty$
on the front face in any region where $s \leq C$. The one place where classical methods do not apply is near
the corner where the front and bottom faces meet. We obtain in this way a nonzero solution $U$ to the flow equations 
on some fixed time-interval $[0,t_0)$ which have initial $\calC^2$ network $\Gamma_0$.   This proves existence of
a flowout for any admissible $\calC^2$ network $\Gamma_0$.
\end{proof}

\begin{remark}
We do not assert uniqueness in this last result, but note that it is highly likely to be true. 
\end{remark}

\section{Final comments and further directions}

In this last section we present a number of results which round out and provide further
context to our main theorem. 

\subsection{The unparametrized evolution equation}
The analysis in this paper has focused on the parametrized network flow, i.e., the specific parabolic equation
\[
\del_t \gamma = \frac{ \del_x^2 \gamma}{|\del_x \gamma|^2}
\]
with the Herring boundary conditions. As explained in \S  2, this formulation is equivalent to imposing a specific tangential term in the velocity vector.
We study here the question of whether the geometric objects, i..e. network of curves, corresponding to the solution we have 
found, which of course depends on the choice of expanding soliton at  each interior vertex, also depends on this tangential term.  
To do so, we introduce a  ``geometric''  formulation of this curvature flow that only involves the evolution of the set, but does not 
fix a choice of parametrization.  In the following we denote networks by $\calN$ rather than $\Gamma$ to emphasize the 
difference.

\begin{definition}
A network of $n$ curves $\calN_0 = \{n_0^{(j)}\}$ is an admissible initial network if $n_0^{(j)}$ admits a 
regular $\calC^2$ admissible parametrization $\gamma_0^{(j)}$. 
\end{definition}

\begin{definition}[Motion by curvature of networks]
Let $\calN_0 = \{n^{(j)}_0\}$, $j = 1, \ldots, n$ be an admissible initial network, and $\gamma^{(j)}$ a regular parametrization of
$n_0^{(j)}$. A time dependent family of networks $\calN(t)$ with $m \geq n$ constituent curves for each $t \in (0, T_{\max})$ is 
a maximal geometric solution of the geometric motion by curvature  in $[0,T_{\max})$  with Dirichlet boundary conditions and initial 
condition $\calN_0$ if each curve $n^{(j)}$ in $\calN$ has a $\calC^2$ parametrization $\gamma^{(j)}$ and the following is true.
First, for each $j$, 
\begin{equation*}
\left\langle\partial_t\gamma^{(j)}(t,x), \nu^{(j)}(t,x) \right\rangle = {\kappa}^{(j)}(t,x), 
\end{equation*}
where $\nu^{(j)}$ is the unit normal to $\gamma^{(j)}$; second, $\calN(t)$ is a regular network for each $t > 0$, i.e.,
has only regular triple junctions; third, its exterior vertices are fixed points on $\del \Omega$; finally, 
for $j \leq n$, the curves $n^{(j)}$ converge uniformly to $n^{(j)}_0$ as $t\to 0$, while for $j > n$, 
the curves $n^{(j)}$ converge to one of the interior vertices of $\calN_0$.  We say that $T_{\max}$ is the maximal 
time of existence for this regular evolution if this solution exists for $0 \leq t < T_{\max}$, but there exists 
no other regular network on a longer time interval.
\end{definition}

\begin{proposition}\label{geomex}
For any admissible initial network $\calN_0$, there exists a geometric solution $\calN(t)$, $0 \leq t \leq T$, to 
the network flow with with initial datum $\calN_0$. 
\end{proposition}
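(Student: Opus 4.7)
The plan is to deduce the geometric existence statement directly from the parametrized short-time existence result of Proposition~\ref{C2datum}. Since $\calN_0$ is admissible, each arc $n_0^{(j)}$ admits a regular $\calC^2$ parametrization $\gamma_0^{(j)}$; fix such a choice to form a parametrized initial network $\Gamma_0 = \{\gamma_0^{(j)}\}_{j=1}^n$. At each interior vertex $p$ of $\calN_0$, the tangent rays of the incoming arcs determine a fan $\calF_p$, and Proposition~\ref{Steiner} (together with Lemma~\ref{reparametrisation}) furnishes at least one expanding self-similar soliton $S_p$ emanating from $\calF_p$. Fix such a selection $\{S_p\}$ at every interior vertex.

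Next, apply Proposition~\ref{C2datum} to $\Gamma_0$ with these chosen solitons. This produces a parametrized family $\Gamma(t) = \{\gamma^{(j)}(t,\cdot)\}_{j=1}^m$ of regular networks, defined on some interval $[0,T)$ with $T>0$, which solves
\[
\partial_t \gamma^{(j)} = \frac{\partial_x^2 \gamma^{(j)}}{|\partial_x \gamma^{(j)}|^2}
\]
together with the Herring conditions at every interior vertex and Dirichlet conditions at $\partial \Omega$, and which converges to $\Gamma_0$ with the prescribed combinatorial topology as $t \searrow 0$.

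Define $\calN(t) := \bigcup_{j=1}^m \gamma^{(j)}(t, [0,1])$. To verify that $\calN(t)$ is a geometric solution in the sense of the definition, observe that taking the inner product of the parametrized evolution equation with $\nu^{(j)}$ yields
\[
\langle \partial_t \gamma^{(j)}, \nu^{(j)}\rangle = \left\langle \frac{\partial_x^2 \gamma^{(j)}}{|\partial_x\gamma^{(j)}|^2}, \nu^{(j)} \right\rangle = \kappa^{(j)},
\]
since the normal component of $\partial_x^2 \gamma / |\partial_x \gamma|^2$ is precisely the curvature vector $\kappa \nu$. The Herring conditions on $\Gamma(t)$ ensure that each interior vertex of $\calN(t)$ is a regular trivalent junction for $t>0$; the Dirichlet conditions ensure that exterior vertices remain fixed on $\partial\Omega$; and the convergence $\gamma^{(j)}(t,\cdot) \to \gamma_0^{(j)}$ as $t \searrow 0$ from Proposition~\ref{C2datum} gives the required uniform convergence $n^{(j)}(t) \to n^{(j)}_0$ for $j \leq n$. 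Finally, the curves $\gamma^{(j)}$ with $j>n$ are parametrized on the shrinking domains $P^{(j)}$ introduced in \S 3, so they collapse into the corresponding irregular interior vertex of $\calN_0$, matching the final clause in the definition.

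There is essentially no real obstacle here beyond the machinery already assembled in the preceding sections: Proposition~\ref{geomex} is, in effect, a translation between the parametrized and unparametrized formulations. The one substantive point to double-check is that the combinatorial topology of $\calN(t)$ determined by the choice $\{S_p\}$ is compatible with the definition of geometric solution---in particular, that each new interior arc disappears into the prescribed irregular vertex rather than wandering off---but this is already encoded in the domain decomposition $Q_h \sqcup P_h$ used throughout the proof of Theorem~\ref{existence-uniqueness-special-flow}.
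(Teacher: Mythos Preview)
Your proof is correct and follows exactly the same approach as the paper: choose a $\calC^2$ parametrization of $\calN_0$, invoke the parametrized short-time existence theorem, and observe that the resulting family is automatically a geometric solution. In fact you are slightly more careful than the paper, which cites Theorem~\ref{existence-uniqueness-special-flow} (stated for smooth data) rather than Proposition~\ref{C2datum}; your additional verification of each clause in the definition of geometric solution is entirely routine but not unwelcome.
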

\begin{proof}
If $\{\gamma^{(j)}_0\}$ is a $\calC^2$ parametrization of the curves in $\calN_0$, then by Theorem~\ref{existence-uniqueness-special-flow} 
there exists a solution $\Gamma(t) = \{\gamma^{(j)}\}$ of the parametrized flow with initial datum $\Gamma_0:=\{\gamma_0^{(j)}\}$. 
Clearly this is a solution of the unparametrized geometric flow as well. 
\end{proof}

\begin{definition}
Two admissible networks $\calN$ and $\calN'$ in the smoothly bounded set $\Omega$ with the same outer vertices are said to have the same 
topology if there is a diffeomorphism $\Phi$ of $\Omega$ which fixes the outer vertices and is such that $\Phi(\calN) = \calN'$ and 
$\Phi( \Omega \setminus \calN) = \Omega \setminus \calN'$. In particular, two such networks have the same number of interior and exterior curves, vertices,
and overall combinatorial type. 
\end{definition}

\begin{proposition}\label{fixedtop}
Let $\calN_0$ be a smooth admissible irregular initial network, and $\calN(t)$, $0 \leq t \leq T$ a smooth
solution of the unparametrized geometric flow starting from $\calN_0$. Choose any smooth parametrization
$\Gamma(t) = \{\gamma^{(j)}\}$ of $\calN(t)$. Now suppose that $\widetilde{\Gamma}(t) = \{\tilde{\gamma}^{(j)}\}$, 
$0 \leq t \leq \widetilde{T}$ is any 
other solution of the parametrized flow which limits geometrically to $\calN_0$ and which has the same topology 
as $\Gamma(t)$ for any $t \in (0, \min\{T, \widetilde{T}\})]$. Then the curves in $\widetilde{\Gamma}(t)$ are
reparametrizations of those in $\Gamma(t)$. 
\end{proposition}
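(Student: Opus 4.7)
The plan is to reduce the claim to the uniqueness part of Theorem~\ref{existence-uniqueness-special-flow} by explicitly reparametrizing $\Gamma(t)$ so that it satisfies the specific parametrized flow equation \eqref{evolution3}. Since $\widetilde{\Gamma}(t)$ is a parametrized solution, its normal velocity equals $\kappa\nu$, so $\widetilde{\calN}(t) = \{\tilde{\gamma}^{(j)}(t,[0,1])\}$ is a geometric solution starting from $\calN_0$ with the same topology as $\calN(t)$. Once one shows $\widetilde{\calN}(t) = \calN(t)$ as sets for every $t$ in the common interval of existence, each $\gamma^{(j)}(t,\cdot)$ is a regular embedding, and the functions $\psi^{(j)}(t,x) := (\gamma^{(j)}(t,\cdot))^{-1}(\tilde{\gamma}^{(j)}(t,x))$ are well-defined smooth diffeomorphisms providing the required reparametrizations. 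So the essential content of the proposition is geometric uniqueness, which I plan to establish by promoting $\Gamma$ to a parametrized solution and then quoting Theorem~\ref{existence-uniqueness-special-flow}.

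\textbf{Step 1: the reparametrization PDE.} The given parametrization satisfies $\partial_t\gamma^{(j)} = \kappa^{(j)}\nu^{(j)} + \xi^{(j)}\tau^{(j)}$ for a known smooth function $\xi^{(j)}$. I seek diffeomorphisms $\varphi^{(j)}(t,\cdot):[0,1]\to[0,1]$ such that $\hat{\gamma}^{(j)}(t,x) := \gamma^{(j)}(t,\varphi^{(j)}(t,x))$ satisfies $\partial_t\hat{\gamma}^{(j)} = \partial_x^2\hat{\gamma}^{(j)}/|\partial_x\hat{\gamma}^{(j)}|^2$. The normal parts of both sides automatically agree (both equal $\kappa\nu$), and equating tangential parts, after a routine chain-rule computation, gives a scalar quasilinear parabolic equation of the schematic form
\[
\partial_t\varphi^{(j)} = \frac{1}{|\partial_\varphi\gamma^{(j)}|^2(\partial_x\varphi^{(j)})^2}\partial_x^2\varphi^{(j)} + (\text{lower order in }\varphi^{(j)}),
\]
with strictly positive principal coefficient. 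I prescribe initial data by $\gamma^{(j)}(0,\varphi^{(j)}(0,x)) = \tilde{\gamma}^{(j)}(0,x)$, which is a smooth diffeomorphism of $[0,1]$ because $\Gamma_0$ and $\widetilde{\Gamma}_0$ are two parametrizations of the same initial network; I set $\varphi^{(j)}(t,1) = 1$ at exterior vertices and $\varphi^{(j)}(t,0) = 0$ at interior vertices (corresponding to the convention that $\gamma^{(j)}(t,0)$ is the common vertex position in each cluster). With these Dirichlet conditions the system decouples across curves, and a direct check shows that the Herring condition for $\hat\Gamma$ at each interior vertex, together with the matching of $\zeta^{(j)}(t,0)$ values forced by \eqref{evolution3}, is automatic given the compatibility $\xi^{(j)}(t,0) = \langle p'(t),\tau^{(j)}\rangle$ inherited from $\Gamma$.

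\textbf{Step 2: solving the PDE and applying the uniqueness theorem.} The equation for each $\varphi^{(j)}$ is strictly parabolic with coefficients smooth on $Q_h \sqcup P_h$, since by hypothesis the worldsheet of $\Gamma$ is the image of a smooth map on $Q_h$. I will solve it using the same scheme as Sections~\ref{Section:model case}--\ref{short-time existence}: construct an approximate solution whose Taylor series at $\bff$, each $\ff$, and the side faces is determined recursively by the equation, the initial data, and the Dirichlet vertex conditions, then correct to an exact solution via the contraction argument of Section~\ref{short-time existence} based on the linear estimates of Section~\ref{subsection:linearized operator}. Because the ambient curves $\Gamma(t)$ are given, this auxiliary problem is technically lighter than the full network flow --- the principal part is prescribed, the system decouples across curves, and no soliton selection is required. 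The resulting $\hat{\Gamma} = \Gamma\circ\varphi$ is then, by construction, a parametrized solution of the network flow with initial parametrization $\widetilde{\Gamma}_0$ and the same combinatorial topology as $\widetilde{\Gamma}$; since the expanding soliton at each irregular vertex is intrinsic to the geometric flow (it is the tangent flow at $(p_i,0)$, cf.~Proposition~\ref{Steiner}), the soliton choices also match. Theorem~\ref{existence-uniqueness-special-flow} then yields $\hat{\Gamma}(t) = \widetilde{\Gamma}(t)$, which reads precisely as $\tilde{\gamma}^{(j)}(t,x) = \gamma^{(j)}(t,\varphi^{(j)}(t,x))$.

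\textbf{Main obstacle.} The only serious technical point is Step~2: solvability of the reparametrization problem with correct regularity across $t=0$, in particular near the front faces $\ff$ over irregular vertices where the inner arcs shrink to a point. Classical parabolic theory does not apply directly there --- one must redeploy the blowup machinery, verify that the Taylor-series construction on $\ff$ is compatible with the soliton asymptotics of $\Gamma$ inherited from its being a parametrization of $\calN(t)$, and propagate the series argument at the corners $\ff \cap \bff$ and $\ff \cap \lf$. However, the linearized problem here is of exactly the type treated in Section~\ref{subsection:linearized operator}, and the contraction step carries over verbatim with the simplification that the unknown enters only through $\varphi$ rather than through the geometry of the curves themselves.
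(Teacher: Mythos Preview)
Your approach is correct and close in spirit to the paper's, but differs in one structural choice and misses one simplifying observation.

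The paper also derives a reparametrization PDE and solves it with the blowup machinery of \S\S5--7, but instead of reparametrizing $\Gamma$ into a parametrized solution and then invoking the uniqueness of Theorem~\ref{existence-uniqueness-special-flow}, it works directly with the relation $\tilde\gamma^{(j)}=\gamma^{(j)}(t,\psi^{(j)}(t,x))$. The key observation you do not make is that while the PDE for $\psi^{(j)}$ is quasilinear (its coefficients involve compositions with $\psi^{(j)}$, exactly the feature you flag as the ``main obstacle''), passing to the \emph{inverse} diffeomorphism $\xi^{(j)}=(\psi^{(j)})^{-1}$ produces a genuinely \emph{linear} parabolic equation, of the schematic form
\[
\partial_t\xi^{(j)}=a^{(j)}(t,y)\,\partial_y^2\xi^{(j)}+b^{(j)}(t,y)\,\partial_y\xi^{(j)},
\]
whose coefficients depend only on the known parametrized solution $\tilde\gamma^{(j)}$. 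This removes the need for any fixed-point argument in the reparametrization step itself; one can go straight to the approximate-solution-plus-correction scheme for a linear problem of the same structural type as in \S\ref{subsection:linearized operator}.

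What your route buys is conceptual economy: you reduce everything to the black-box uniqueness of Theorem~\ref{existence-uniqueness-special-flow}, so the analytic work is just ``make $\Gamma$ into a parametrized solution''. What the paper's route buys is a strictly simpler auxiliary problem (linear rather than quasilinear) and a more self-contained argument that does not lean on the uniqueness of the main theorem. Both are valid; the inverse-linearization trick is worth knowing.
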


\begin{remark}
  If $\calN_0$ is a regular network containing precisely $3$ curves, then this result reduces to~\cite[Theorem 3.16]{GMP}. 
\end{remark}

\begin{proof}
We must prove that there exists a collection of functions $\psi^{(j)}:[0, T]\times[0,1]\to [0,1]$ such that 
$\widetilde{\gamma}^{(j)}=\gamma^{(j)}(t, \psi^{(j)}(t,x))$.   We shall derive evolution equations for
the $\psi^{(j)}$, and in fact also for their inverses, $\xi^{(j)}: [0,\widetilde{T}] \times [0,1] \to [0,1]$,
$\psi^{(j)} (t, \xi^{(j)}(t,x)) = x$, $\xi^{(j)}(t, \psi^{(j)}(t,x)) = x$.  Not unexpectedly, it is necessary to pass to
the blowups $Q_h \cup P_h$ to write these equations.


Suppose for the moment that $t>0$, and we derive the equations away from the front faces.  
Obviously, reparametrizations do not affect the normal velocities, so the normal components of
$\del_x \widetilde{\gamma}^{(j)}$ and $\del_x \gamma^{(j)}$ are the same at corresponding points. 
The reparametrization must be chosen so that the tangential components of these two tangent
vectors also correspond.  This will be the case if 
\begin{equation*}
\begin{cases}
\begin{array}{ll}
\psi^{(j)}_t(t,x) &= \displaystyle{\frac{\psi_{xx}^{(j)}\left(t,x\right)}{\left|\displaystyle{\widetilde{\gamma}_{x}^{(j)} 
\circ \psi^j(t,x)}\right|^2 \psi^{(j)}_x(t,x)^2}} -
\frac{\langle\widetilde{\gamma}_t^{(j)}\circ\psi^{(j)}(t,x),\widetilde{\gamma}^{(j)}_x \circ \psi^{(j)}(t,x)\rangle}{\vert\widetilde{\gamma}^{(j)}_x \circ \psi^{(j)}(t,x)\vert^2}\\ 
& \qquad + \displaystyle{\frac{1}{|\widetilde{\gamma}_x^{(j)}\circ \psi^{(j)}(t,x))|}
\left\langle\frac{\widetilde{\gamma}_{xx}^{(j)}\circ \psi^{(j)}(t,x)}{|\widetilde{\gamma}_{x}^{(j)}\circ \psi^{(j)}(t,x)|^{2}}, 
\frac{\widetilde{\gamma}_x^{(j)}\circ \psi^{(j)}(t,x))}{|\widetilde{\gamma}_{x}^{(j)}\circ \psi^{(j)}(t,x)|}\right\rangle
}  \\
\psi^{(j)}(t,0)&=0
\\
\psi^{(j)}(t,1)&=1
\\
\widetilde{\gamma}\left(0,\psi^{(j)}(0,x)\right)&=\gamma^{(j)}_0(x)
\end{array}
\end{cases}
\end{equation*}
here we have used the notation $\tilde{\gamma}^{(j)}\circ \psi^{(j)}(t,x)$ to stand for $\tilde{\gamma}^{(j)}(t, \psi^{(j)}(t,x))$. 
Unfortunately, the coefficients of this system depend on the $\psi^{(j)}(t,x)$. To remedy this, we consider instead
the evolution problem for the functions $\xi^{(j)}$ instead. These are
\begin{align*}
\partial_t\xi^{(j)}(t,y)&
=-\partial_t\psi^{(j)}(t,\xi^{(j)}(t,y))\partial_y\xi^{(j)}(t,y)\\
&=-\frac{\psi_{xx}^{(j)}\left(t,x\right)}{|\widetilde{\gamma}_{x}^{(j)}\left(t,y\right)|^{2}}
\partial_y\xi(t,y)^3
+\frac{\left\langle\widetilde{\gamma}_t^{(j)}(t,y),\widetilde{\gamma}^{(j)}_x(t,y)\right\rangle}{\vert\widetilde{\gamma}^{(j)}_x(t,y)\vert^2}\partial_y\xi(t,y)\\
&\,\,\,\,\,-\frac{\partial_y\xi(t,y)}{|\widetilde{\gamma}_{x}^{(j)}\left(t,y\right)|}
\left\langle\frac{\widetilde{\gamma}_{xx}^{(j)}\left(t,y\right)}{|\widetilde{\gamma}_{x}^{(j)}\left(t,y\right)|^{2}}\,,\,
\frac{\widetilde{\gamma}_x^{(j)}(t,y)}{|\widetilde{\gamma}_{x}^{(j)}\left(t,y\right)|}\right\rangle\,,
\end{align*}
and
\begin{align*}
\partial_y\xi^{(j)}(t,y)&= 1/\partial_y\psi^{(j)}(t,\xi^{(j)}(t,y))\,,\\
\partial^2_y\xi^{(j)}(t,y)&=-\frac{\partial_y^2\psi^{(j)}(t,\xi^{(j)}(t,y))}{\partial_y\psi^{(j)}(t,\xi^{(j)}(t,y))^3}
=-\partial_y\xi^{(j)}(t,y)^3\partial_y^2\psi^{(j)}(t,\xi^{(j)}(t,y))\,.
\end{align*}
this leads to the {\it linear} system
\begin{equation*}
\begin{cases}
\begin{array}{ll}
\partial_t\xi^{(j)}(t,y)&
= \displaystyle{ \frac{\partial_y^2\xi(t,y)}{|\widetilde{\gamma}_{x}^{(j)}(t,y)|^{2}}
+\frac{\langle\widetilde{\gamma}_t^{(j)}(t,y),\widetilde{\gamma}^{(j)}_x(t,y)\rangle}{\vert\widetilde{\gamma}^{(j)}_x(t,y)\vert^2}
\partial_y\xi(t,y)}\\ 
&\,\,\,\,\, \displaystyle{-\frac{\partial_y\xi(t,y)}{|\widetilde{\gamma}_{x}^{(j)}(t,y)|}
\left\langle\frac{\widetilde{\gamma}_{xx}^{(j)}(t,y)}{|\widetilde{\gamma}_{x}^{(j)}(t,y)|^{2}}\,,\,
\frac{\widetilde{\gamma}_x^{(j)}(t,y)}{|\widetilde{\gamma}_{x}^{(j)}(t,y)|}\right\rangle }
 \\
\xi^{(j)}(t,0)&=0
\\
\xi^{(j)}(t,1)&=1
\\
\xi^{(j)}(x)&=\widetilde{\gamma}^{-1}(y)\,.\\
\end{array}
\end{cases}
\end{equation*}
The coefficients here are at least continuous since $\Gamma(t)$ solves the network flow and its curves are of class $\calC^{1,2}$.
We write this main equation as
\begin{equation}\label{main-repara}
\partial_t\xi^{(j)}(t,x)=a^{(j)}(t,x)\partial_x^2\xi^{(j)}(t,x)+b^{(j)}(t,x)\partial_x \xi^{(j)}(t,x) ;
\end{equation}
here $a^{(j)}(t,x)$, $b^{(j)}(t,x) \in \calC^0$. 

Now a bit of calculation shows that in the $(\tau,s)$ coordinates, \eqref{main-repara} takes the form
\[
\tau \partial_\tau\xi^{(j)}-s\partial_s \xi^{(j)}-\tilde{a}\partial_s^2\xi^{(j)}-\tilde{b}\tau\partial_s\xi^{(j)}=0\,.
\]
Taking into account the explicit formul\ae\ for $a$ and $b$, and writing $\xi=\tau\varphi$ and $\gamma=\tau\eta$,
we obtain finally that
\begin{multline*}
\left(\tau\partial_\tau+1-2\partial_s\right)\varphi^{(j)}
= \\ 
\frac{\partial_s^2\varphi^{(j)}}{\vert \partial_s\eta^{(j)}\vert^2}
+\frac{\partial_s\varphi^{(j)}}{\vert\partial_s\eta^{(j)}\vert}
\left\langle\left(\tau\partial_\tau+1-2\partial_s\right)\eta^{(j)}, 
\frac{\partial_s\eta^{(j)}}{\vert\partial_s\eta^{(j)}\vert}\right\rangle
-\frac{\partial_s \varphi^{(j)}}{\vert\partial_s\eta^{(j)}\vert}\left\langle\frac{\partial_s^2\eta^{(j)}}{\vert\partial_s\eta^{(j)}\vert^2},
\frac{\partial_s\eta^{(j)}}{\vert\partial_s\eta^{(j)}\vert}\right\rangle\,.
\end{multline*}

This system has the same structure as the main one we have studied earlier in this paper, and following the steps
of that argument, we can build first an approximate solution to infinite order, then correct this to an exact solution.
We omit the details. 
\end{proof}

\begin{corollary}
The number of geometric flowouts of an initial geometric network $\calN_0$, up to reparametrization, is
precisely the same as the product of the number of expanders at each internal vertex.
\end{corollary}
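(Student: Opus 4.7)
The plan is to construct a bijection between geometric flowouts of $\calN_0$ (considered as evolving subsets of $\Omega$) and tuples $\mathbf{S} = (S_p)_{p \in \calV_{0,\inte}}$ consisting of a choice of one expander at each interior vertex. Since the number of such tuples is precisely $\prod_p N_p$, where $N_p$ is the number of expanders at $p$, the corollary will follow at once.

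The forward map $\mathbf{S} \mapsto \calN^{\mathbf{S}}$ sends a tuple to the underlying set of the parametrized flowout $\Gamma^{\mathbf{S}}$ produced by Theorem \ref{existence-uniqueness-special-flow} (or Proposition \ref{C2datum} in the $\calC^2$ case). I would first verify that this map is well-defined: the construction of $\Gamma^{\mathbf{S}}$ depends on auxiliary parametrization data (the complementary choices along the left faces introduced in \S 5), but any two such choices produce parametrized flowouts of the same combinatorial topology and with the same initial datum, so Proposition \ref{fixedtop} shows they differ only by reparametrization, hence define the same geometric flowout.

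For injectivity I would recover $\mathbf{S}$ from the geometric flowout by parabolic rescaling at each irregular vertex. As already observed in the discussion preceding \S 3.1, the soliton $S_p$ is the tangent flow of $\calN^{\mathbf{S}}$ at $(p,0)$; in the blowup language of \S 3 this is the statement that the restriction of the lifted map to the front face of $Z_h$ over $(0,p)$ is the time-$\tfrac{1}{2}$ slice of $S_p$, a fact built into the boundary conditions of the construction (cf.\ the soliton equation \eqref{stationary} arising as the $\tau \to 0$ reduction of \eqref{liftedeq1}). Lemma \ref{reparametrisation} then identifies the parametrized expander with its underlying geometric network, so $\mathbf{S}$ is an intrinsic invariant of $\calN^{\mathbf{S}}$, proving injectivity. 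For surjectivity, given any geometric flowout $\calN$, Proposition \ref{geomex} supplies a parametrized flowout $\Gamma$ whose underlying set is $\calN$; extracting the tuple $\mathbf{S}$ of tangent flows at interior vertices, one sees that $\Gamma^{\mathbf{S}}$ and $\Gamma$ share the same topology and initial datum, so Proposition \ref{fixedtop} yields $\calN = \calN^{\mathbf{S}}$.

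The main obstacle is making the tangent-flow extraction fully rigorous, i.e.\ verifying that the parabolic rescaling limit at each irregular vertex exists as a geometric expander and coincides with the chosen $S_p$. This is not an independent analytic question but is subsumed by the smoothness, established in \S\S 5--7, of the solution map on $Q_h \sqcup P_h$ up to and including the front faces; combined with Lemma \ref{reparametrisation}, which ensures that a parametrized expander and its underlying geometric network determine one another, no additional estimates are required. The remaining work is bookkeeping to assemble these ingredients into a clean statement.
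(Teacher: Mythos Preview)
Your overall architecture (set up a bijection between expander tuples and geometric flowouts) matches the paper's, but the paper routes the bijection through a different invariant: the \emph{topology} of the flowout for $t>0$, rather than the tangent flow at each vertex. The paper's proof is simply: each expander tuple determines a topology and hence a parametrized solution (Theorem~\ref{existence-uniqueness-special-flow}), and by Proposition~\ref{fixedtop} any two geometric solutions with the same topology coincide up to reparametrization. Since the topology of a regular network is well-defined at every $t>0$, no analysis near $t=0$ is needed for the inverse direction.

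Your route via tangent flows has two issues. First, the reference to Proposition~\ref{geomex} in the surjectivity step is misplaced: that proposition produces a geometric solution from initial data $\calN_0$, it does not produce a parametrized representative of a \emph{given} geometric flowout $\calN$. What you actually need there is simply to choose any smooth parametrization of $\calN$, exactly as in the hypothesis of Proposition~\ref{fixedtop}. Second, and more substantively, your surjectivity argument requires extracting the tangent flow at each irregular vertex from an \emph{arbitrary} geometric flowout $\calN$. You justify this by appealing to the smoothness on $Q_h\sqcup P_h$ established in \S\S 5--7, but that smoothness is proved only for the \emph{constructed} solutions $\Gamma^{\mathbf S}$, not for a generic geometric flowout whose behaviour as $t\searrow 0$ is a priori only the uniform convergence in Definition~\textit{Motion by curvature of networks}. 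The paper sidesteps this entirely: topology is read off at any positive time, so no control at $t=0$ is required, and the (implicit) bijection between expander tuples and resolution topologies does the rest. Replacing your tangent-flow extraction with ``read off the topology and invoke the expander classification'' repairs the argument and collapses it to the paper's.
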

\begin{proof}
The claim is that for every possible topology for a flowout, there exists exactly one unparametrized geometric
solution starting from $\calN_0$.    First, choose a compatible expander at each internal vertex of $\calN_0$.
This determines the topology of an approximate solution to the parametrized flow equations, and thus
an exact solution of these equations.  On the other hand, we have proved that any two geometric solutions 
with the same topology coincide up to reparametrization.  This establishes the bijection. 
\end{proof}

This answers the two Open Problems, 11.19 and 11.20, as stated in~\cite{Man}.

\begin{corollary}
If the initial network has precisely one triple junction and no other internal vertices, then
there is a unique solution of the network flow. If the initial datum has one quadruple junction 
forming angles of $\pi/3$ and $2\pi3$, then there is a unique solution of the network flow.
\end{corollary}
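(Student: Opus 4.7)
\bigskip

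\noindent\emph{Proof plan.} Both statements are consequences of the previous corollary, which puts the set of geometric flowouts (up to reparametrization) of an admissible initial network in bijection with the product, over all internal vertices $p$, of the set of expanders at $p$. Since in both cases there is exactly one internal vertex, the task reduces to showing that there is exactly one expander at the prescribed singular vertex.

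For the first assertion, the internal vertex is a triple junction, i.e.\ the fan $\calF$ consists of three rays. The uniqueness of the expander for any $3$-ray fan is the theorem of Schn\"urer--Schulze \cite{SS} and is the $k=3$ case of Proposition~\ref{Steiner}: any valid decomposition of $\{q_1,q_2,q_3\}\subset \del B$ into adjacent subsets must be the full set itself (decompositions containing a singleton subset cannot be realized by a geodesic network), and the corresponding geodesic Steiner tree with one Herring vertex in $(\RR^2,g)$ is unique. Hence there is exactly one expander, and so exactly one flowout.

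For the second assertion, the fan $\calF$ has four rays with consecutive angles $\pi/3,2\pi/3,\pi/3,2\pi/3$ in cyclic order. A direct computation shows that this is precisely the angular configuration obtained by collapsing a short interior edge connecting two Herring trivalent vertices: placing two Herring vertices on an axis at a small distance apart and evaluating the asymptotic directions of the four non-shared edges yields exactly this pattern (with the axis direction equal to the angle bisector of the two $2\pi/3$-wedges). Proposition~\ref{Steiner} then classifies the candidate expanders: the relevant decompositions of $\{q_1,\dots,q_4\}$ into adjacent subsets are the full set (which supports two combinatorial types of ``H-shape'' Steiner tree, according to which pair of adjacent $q_j$'s shares a trivalent vertex) and the two decompositions into pairs of adjacent points (each supporting a pair of disjoint geodesic arcs). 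I would then show that the prescribed angular data singles out exactly one of these: among the two H-shapes, only the one whose interior edge lies along the symmetry axis of $\calF$ produces the correct asymptotic angles at each of its two Herring trivalent vertices; and each of the two pairings of adjacent rays must be ruled out by showing that the unique geodesic in $(\RR^2,g)$ asymptotic to the two rays of the pair exists but fails to combine with the other pair's geodesic into an admissible expanding solution for the given angular data.

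The main obstacle is precisely this last step: the careful case analysis showing that, for the specific angles $\pi/3$ and $2\pi/3$, the two disjoint-pair decompositions and the ``wrong'' H-shape are all excluded. The cleanest path I see is to exploit the $\ZZ_2$ symmetry of $\calF$ (reflection across the bisecting axis) together with the rigidity of the Herring condition in $(\RR^2,g)$: any expander must either be preserved by this symmetry or come in a symmetric pair; the rigidity of the geodesic equation \eqref{expanders} pins down the symmetric H-shape as the unique invariant configuration, and a direct check of asymptotic angles eliminates the remaining possibilities. Once this reduction is carried out, Proposition~\ref{fixedtop} (uniqueness within a fixed topology) upgrades uniqueness of the expander to uniqueness of the full network flow.
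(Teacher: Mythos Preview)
Your overall reduction is exactly the paper's (implicit) approach: invoke the preceding corollary to convert uniqueness of the flow into the claim that there is exactly one expander at the sole internal vertex, and then cite Schn\"urer--Schulze \cite{SS} for the triple case. The paper gives no further proof of this corollary; in particular it does not spell out why the $\pi/3$, $2\pi/3$ quadruple fan admits a unique expander, treating this as known (from the classification in \cite{MS} and the discussion of standard transitions in \S8.3).

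Your attempt to supply that argument for the quadruple case has real gaps. First, the phrase ``only the one whose interior edge lies along the symmetry axis produces the correct asymptotic angles'' misframes the question: every candidate in Proposition~\ref{Steiner} has the prescribed asymptotic directions by construction, so the issue is not which topology is compatible with the boundary data but for which combinatorial types a regular geodesic Steiner tree in $(\RR^2,g)$ actually exists. Second, the $\ZZ_2$-symmetry argument cannot rule out the other H-shape: for this fan both H-shapes are individually invariant under the $\pi$-rotation (each has its two trivalent vertices exchanged), so symmetry does not single one out. Third, the disconnected two-geodesic configurations are not ``inadmissible''---they are genuine expanders, treated separately in \S8.2; the uniqueness asserted here is implicitly restricted to connected tree expanders, and you should say so rather than try to exclude them on other grounds. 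To actually close the argument you need a direct reason (from the expander ODE \eqref{expanders} or the explicit case analysis in \cite{MS}) that the second H-shape topology admits no regular geodesic network for these specific angles; symmetry alone will not do it.
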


\subsection{Disconnected and unstable solutions}
The solutions we have found in this paper are regular networks when $t > 0$; in particular, every vertex
of the original network resolves into an expanding cluster of vertices, connected by a system of 
new curves, in such a way that the evolving vertices are all regular, i.e., points where precisely three
curves meet at $2\pi/3$.  In this section we describe briefly two other possible evolutions, both
of which are immediately accessible by our methods.  The first is where the network instantaneously
disconnects (locally) at one or more of the interior vertices, and the second is where the network 
evolves but maintaining irregular vertices modelled on unstable expanding solutions.   
The fact that this network flow is the gradient flow of the overall length of the network indicates
that the `physically relevant' solutions are those which are locally and globally stable. This suggests
that the second of these `new' flows is an oddity. On the other hand, there are situations where the
physics suggest that the network should disconnect.  We explain all of this now. 

Regarding the first of these issues, we recall from \cite{MS} that disconnected self-similar
expanders exist.  Indeed, as described in that paper, such expanders are in bijective
correspondence with geodesic Steiner trees in the negatively curved space
$(\RR^2,g)$, where $g=e^{\vert x\vert^2}\vert dx\vert^2$. The initial condition that the expander
limits to a particular union of $n$ rays meeting at the origin corresponds to the fact
that these Steiner trees have $n$ exterior edges limiting to points $q_1, \ldots, q_n$ lying
on the boundary of the geodesic compactification of this space. By virtue of the negative
curvature of $g$, this geodesic compactification is naturally identified with the closed
unit disk, so we may consider these points as arranged in cyclic order around the circle. 

Now suppose that we decompose the set $\{1, \ldots, n\}$ into disjoint subsets $S_1 \sqcup S_2 \sqcup \ldots S_k$,
where each $S_j$ consists of a contiguous set of elements (mod $n$) and $|S_j| \geq 2$ for all $j$. 
In other words, each $S_i$ is of the form $\{ \ell, \ell+1, \ldots, \ell'\}$.  The general methods there (which are 
based on minimization of $1$-rectifiable currents with coefficients in $\mathbb Z_r$ (for various values of $r$)
show that there is a connected geodesic Steiner tree $\calT_j$ with asymptotic boundary $S_j$ for each $j$. It follows 
by the regularity theory for these varifold minimizers that $\calT_i \cap {\cal T}_{j} = \emptyset$ for $i \neq j$. In other words,
this produces a geodesic Steiner tree with precisely $k$ components. 

Now let $\Gamma_0$ be any irregular geodesic network such that at least one vertex has valence
strictly greater than $3$. According  to our main theorem, the choice of a self-similar expander
at each interior vertex of $\Gamma_0$ leads to a unique evolution $\Gamma(t)$, $0 < t < T$.
If the expander is disconnected, the corresponding flowout $\Gamma(t)$ becomes immediately
locally disconnected near that vertex. Depending on the overall topology, $\Gamma(t)$ itself
may become disconnected. In some situations this may be the more physically appropriate situation.  

In each of the pictures below, the initial irregular network is the same. In the first picture it is 
the boundary of the blue set, in the second one the boundary of the yellow set, and in 
the third, it is the boundary for four different phases.  In the first two cases, the disconnected
flowouts are physically realistic, whereas in the last case, the four phases must remain separate
so the evolving network should also remain connected, but there are two possible combinatorial
arrangements.  

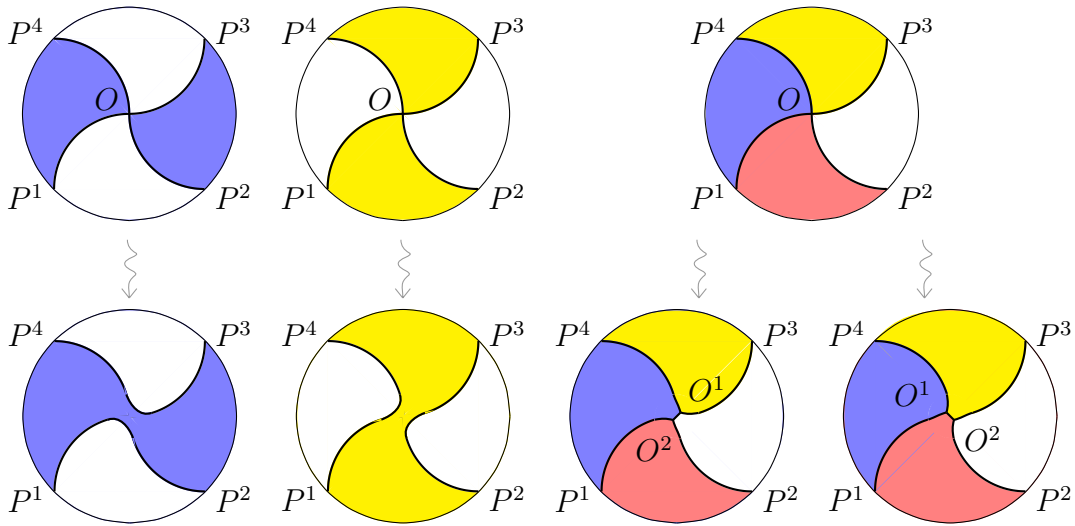
\begin{figure}[h]
\begin{center}
\begin{tikzpicture}
\filldraw[color=blue!50!white,scale=1,domain=-3.141: 3.141,
smooth,variable=\t,shift={(-5,2)},rotate=0]plot({1.42*sin(\t r)},
{1.42*cos(\t r)});
\filldraw[color=white]
(-6,3)to[out= 40,in=180, looseness=1](-5,3.4)--
(-5,3.4)to[out= 0,in=140, looseness=1](-4,3);
\filldraw[color=white]
(-6,3)to[out= 0,in=180, looseness=1](-4,3)--
(-4,3)to[out= -135,in=45, looseness=1](-5,2)--
(-5,2)to[out= 130,in=-40, looseness=1](-6,3);
\filldraw[color=blue!50!white,thick,scale=1,domain=0: 1.5708,
smooth,variable=\t,shift={(-6,2)},rotate=0]plot({1*sin(\t r)},
{1*cos(\t r)});
\filldraw[color=white,thick,scale=1,domain=0: 1.5708,
smooth,variable=\t,shift={(-5,3)},rotate=-90]plot({1*sin(\t r)},
{1*cos(\t r)});
\filldraw[color=white]
(-6,1)to[out= -40,in=180, looseness=1](-5,0.6)--
(-5,0.6)to[out= 0,in=-140, looseness=1](-4,1);
\filldraw[color=white]
(-6,1)to[out= 0,in=180, looseness=1](-4,1)--
(-4,1)to[out= 150,in=-50, looseness=1](-5,2)--
(-5,2)to[out= -135,in=45, looseness=1](-6,1);
\filldraw[color=white,thick,scale=1,domain=0: 1.5708,
smooth,variable=\t,shift={(-5,1)},rotate=90]plot({1*sin(\t r)},
{1*cos(\t r)});
\filldraw[color=blue!50!white,thick,scale=1,domain=0: 1.5708,
smooth,variable=\t,shift={(-4,2)},rotate=180]plot({1*sin(\t r)},
{1*cos(\t r)});
\draw[color=black,scale=1,domain=-3.141: 3.141,
smooth,variable=\t,shift={(-5,2)},rotate=0]plot({1.42*sin(\t r)},
{1.42*cos(\t r)});
\draw[color=black,thick,scale=1,domain=0: 1.5708,
smooth,variable=\t,shift={(-6,2)},rotate=0]plot({1*sin(\t r)},
{1*cos(\t r)});
\draw[color=black,thick,scale=1,domain=0: 1.5708,
smooth,variable=\t,shift={(-4,2)},rotate=180]plot({1*sin(\t r)},
{1*cos(\t r)});
\draw[color=black,thick,scale=1,domain=0: 1.5708,
smooth,variable=\t,shift={(-5,1)},rotate=90]plot({1*sin(\t r)},
{1*cos(\t r)});
\draw[color=black,thick,scale=1,domain=0: 1.5708,
smooth,variable=\t,shift={(-5,3)},rotate=-90]plot({1*sin(\t r)},
{1*cos(\t r)});
\path[font=\large]
(-6,3.1) node[left] {$P^4$}
(-4,3.1) node[right] {$P^3$}
(-6,.9) node[left] {$P^1$}
(-4,.9) node[right] {$P^2$}
(-5.3,2.47) node[below] {$O$};
\draw[color=black!40!white,scale=0.5,shift={(-10,-2)}]
(-0.05,2.65)to[out= -90,in=150, looseness=1] (0.17,2.3)
(0.17,2.3)to[out= -30,in=100, looseness=1] (-0.12,2)
(-0.12,2)to[out= -80,in=40, looseness=1] (0.15,1.7)
(0.15,1.7)to[out= -140,in=90, looseness=1.3](0,1.1)
(0,1.1)--(-.2,1.35)
(0,1.1)--(+.2,1.35);
\end{tikzpicture}
\begin{tikzpicture}
\filldraw[color=yellow]
(-6,3)to[out= 40,in=180, looseness=1](-5,3.4)--
(-5,3.4)to[out= 0,in=140, looseness=1](-4,3);
\filldraw[color=yellow]
(-6,3)to[out= 0,in=180, looseness=1](-4,3)--
(-4,3)to[out= -135,in=45, looseness=1](-5,2)--
(-5,2)to[out= 130,in=-40, looseness=1](-6,3);
\filldraw[color=white,thick,scale=1,domain=0: 1.5708,
smooth,variable=\t,shift={(-6,2)},rotate=0]plot({1*sin(\t r)},
{1*cos(\t r)});
\filldraw[color=yellow,thick,scale=1,domain=0: 1.5708,
smooth,variable=\t,shift={(-5,3)},rotate=-90]plot({1*sin(\t r)},
{1*cos(\t r)});
\filldraw[color=yellow]
(-6,1)to[out= -40,in=180, looseness=1](-5,0.6)--
(-5,0.6)to[out= 0,in=-140, looseness=1](-4,1);
\filldraw[color=yellow]
(-6,1)to[out= 0,in=180, looseness=1](-4,1)--
(-4,1)to[out= 150,in=-50, looseness=1](-5,2)--
(-5,2)to[out= -135,in=45, looseness=1](-6,1);
\filldraw[color=yellow,thick,scale=1,domain=0: 1.5708,
smooth,variable=\t,shift={(-5,1)},rotate=90]plot({1*sin(\t r)},
{1*cos(\t r)});
\filldraw[color=white,thick,scale=1,domain=0: 1.5708,
smooth,variable=\t,shift={(-4,2)},rotate=180]plot({1*sin(\t r)},
{1*cos(\t r)});
\draw[color=black,scale=1,domain=-3.141: 3.141,
smooth,variable=\t,shift={(-5,2)},rotate=0]plot({1.42*sin(\t r)},
{1.42*cos(\t r)});
\draw[color=black,thick,scale=1,domain=0: 1.5708,
smooth,variable=\t,shift={(-6,2)},rotate=0]plot({1*sin(\t r)},
{1*cos(\t r)});
\draw[color=black,thick,scale=1,domain=0: 1.5708,
smooth,variable=\t,shift={(-4,2)},rotate=180]plot({1*sin(\t r)},
{1*cos(\t r)});
\draw[color=black,thick,scale=1,domain=0: 1.5708,
smooth,variable=\t,shift={(-5,1)},rotate=90]plot({1*sin(\t r)},
{1*cos(\t r)});
\draw[color=black,thick,scale=1,domain=0: 1.5708,
smooth,variable=\t,shift={(-5,3)},rotate=-90]plot({1*sin(\t r)},
{1*cos(\t r)});
\path[font=\large]
(-6,3.1) node[left] {$P^4$}
(-4,3.1) node[right] {$P^3$}
(-6,.9) node[left] {$P^1$}
(-4,.9) node[right] {$P^2$}
(-5.3,2.47) node[below] {$O$};
\draw[color=black!40!white,scale=0.5,shift={(-10,-2)}]
(-0.05,2.65)to[out= -90,in=150, looseness=1] (0.17,2.3)
(0.17,2.3)to[out= -30,in=100, looseness=1] (-0.12,2)
(-0.12,2)to[out= -80,in=40, looseness=1] (0.15,1.7)
(0.15,1.7)to[out= -140,in=90, looseness=1.3](0,1.1)
(0,1.1)--(-.2,1.35)
(0,1.1)--(+.2,1.35);
\end{tikzpicture}\qquad\qquad\quad
\begin{tikzpicture}
\filldraw[color=yellow]
(-6,3)to[out= 40,in=180, looseness=1](-5,3.4)--
(-5,3.4)to[out= 0,in=140, looseness=1](-4,3);
\filldraw[color=yellow]
(-6,3)to[out= 0,in=180, looseness=1](-4,3)--
(-4,3)to[out= -135,in=45, looseness=1](-5,2)--
(-5,2)to[out= 130,in=-40, looseness=1](-6,3);
\filldraw[color=blue!50!white,thick,scale=1,domain=0: 1.5708,
smooth,variable=\t,shift={(-6,2)},rotate=0]plot({1*sin(\t r)},
{1*cos(\t r)});
\filldraw[color=yellow,thick,scale=1,domain=0: 1.5708,
smooth,variable=\t,shift={(-5,3)},rotate=-90]plot({1*sin(\t r)},
{1*cos(\t r)});
\filldraw[color=blue!50!white]
(-6,3)to[out= -135,in=90, looseness=1](-6.4,2)--
(-6.4,2)to[out= -90,in=135, looseness=1](-6,1);
\filldraw[color=blue!50!white]
(-6,3)to[out= -90,in=90, looseness=1](-6,1)--
(-6,1)to[out= 45,in=-135, looseness=1](-5,2)--
(-5,2)to[out= 135,in=-45, looseness=1](-6,3);
\filldraw[color=red!50!white]
(-6,1)to[out= -40,in=180, looseness=1](-5,0.6)--
(-5,0.6)to[out= 0,in=-140, looseness=1](-4,1);
\filldraw[color=red!50!white]
(-6,1)to[out= 0,in=180, looseness=1](-4,1)--
(-4,1)to[out= 150,in=-50, looseness=1](-5,2)--
(-5,2)to[out= -135,in=45, looseness=1](-6,1);
\filldraw[color=red!50!white,thick,scale=1,domain=0: 1.5708,
smooth,variable=\t,shift={(-5,1)},rotate=90]plot({1*sin(\t r)},
{1*cos(\t r)});
\filldraw[color=white,thick,scale=1,domain=0: 1.5708,
smooth,variable=\t,shift={(-4,2)},rotate=180]plot({1*sin(\t r)},
{1*cos(\t r)});
\draw[color=black,scale=1,domain=-3.141: 3.141,
smooth,variable=\t,shift={(-5,2)},rotate=0]plot({1.42*sin(\t r)},
{1.42*cos(\t r)});
\draw[color=black,thick,scale=1,domain=0: 1.5708,
smooth,variable=\t,shift={(-6,2)},rotate=0]plot({1*sin(\t r)},
{1*cos(\t r)});
\draw[color=black,thick,scale=1,domain=0: 1.5708,
smooth,variable=\t,shift={(-4,2)},rotate=180]plot({1*sin(\t r)},
{1*cos(\t r)});
\draw[color=black,thick,scale=1,domain=0: 1.5708,
smooth,variable=\t,shift={(-5,1)},rotate=90]plot({1*sin(\t r)},
{1*cos(\t r)});
\draw[color=black,thick,scale=1,domain=0: 1.5708,
smooth,variable=\t,shift={(-5,3)},rotate=-90]plot({1*sin(\t r)},
{1*cos(\t r)});
\path[font=\large]
(-6,3.1) node[left] {$P^4$}
(-4,3.1) node[right] {$P^3$}
(-6,.9) node[left] {$P^1$}
(-4,.9) node[right] {$P^2$}
(-5.3,2.47) node[below] {$O$};
\draw[color=black!40!white,scale=0.5,shift={(-13,-2)}]
(-0.05,2.65)to[out= -90,in=150, looseness=1] (0.17,2.3)
(0.17,2.3)to[out= -30,in=100, looseness=1] (-0.12,2)
(-0.12,2)to[out= -80,in=40, looseness=1] (0.15,1.7)
(0.15,1.7)to[out= -140,in=90, looseness=1.3](0,1.1)
(0,1.1)--(-.2,1.35)
(0,1.1)--(+.2,1.35);
\draw[color=black!40!white,scale=0.5,shift={(-7,-2)}]
(-0.05,2.65)to[out= -90,in=150, looseness=1] (0.17,2.3)
(0.17,2.3)to[out= -30,in=100, looseness=1] (-0.12,2)
(-0.12,2)to[out= -80,in=40, looseness=1] (0.15,1.7)
(0.15,1.7)to[out= -140,in=90, looseness=1.3](0,1.1)
(0,1.1)--(-.2,1.35)
(0,1.1)--(+.2,1.35);
\fill[white](-1.45,1) circle (1.4pt);
\end{tikzpicture}

\smallskip

\begin{tikzpicture}
\filldraw[color=blue!50!white,scale=1,domain=-3.141: 3.141,
smooth,variable=\t,shift={(1,0)},rotate=0]plot({1.42*sin(\t r)},
{1.42*cos(\t r)});
\filldraw[color=blue!50!white,scale=0.2,domain=-3.141: 3.141,
smooth,variable=\t,shift={(5,0)},rotate=0]plot({1.42*sin(\t r)},
{1.42*cos(\t r)});
\filldraw[color=white,shift={(6,-2)}]
(-6,3)to[out= 40,in=180, looseness=1](-5,3.4)--
(-5,3.4)to[out= 0,in=140, looseness=1](-4,3);
\filldraw[color=white,shift={(6,-2)}]
(-6,3)to[out= 0,in=180, looseness=1](-4,3)--
(-4,3)to[out= -135,in=45, looseness=1](-5,2)--
(-5,2)to[out= 125,in=-35, looseness=1](-6,3);
\filldraw[color=blue!50!white,thick,scale=1,domain=0: 1.5708,
smooth,variable=\t,shift={(0,0)},rotate=0]plot({1*sin(\t r)},
{1*cos(\t r)});
\filldraw[color=white, shift={(6,-2)}]
(-6,1)to[out= -40,in=180, looseness=1](-5,0.6)--
(-5,0.6)to[out= 0,in=-140, looseness=1](-4,1);
\filldraw[color=white,shift={(6,-2)}]
(-6,1)to[out= 0,in=180, looseness=1](-4,1)--
(-4,1)to[out= 150,in=-50, looseness=1](-5,2)--
(-5,2)to[out= -135,in=45, looseness=1](-6,1);
\filldraw[color=white,thick,scale=1,domain=0: 1.5708,
smooth,variable=\t,shift={(1,-1)},rotate=90]plot({1*sin(\t r)},
{1*cos(\t r)});
\filldraw[color=blue!50!white,thick,scale=1,domain=0: 1.5708,
smooth,variable=\t,shift={(2,0)},rotate=180]plot({1*sin(\t r)},
{1*cos(\t r)});

\filldraw[color=white,thick,scale=1,domain=0: 1.5708,
smooth,variable=\t,shift={(1,1)},rotate=-90]plot({1*sin(\t r)},
{1*cos(\t r)});
\draw[color=black,scale=1,domain=-3.141: 3.141,
smooth,variable=\t,shift={(1,0)},rotate=0]plot({1.42*sin(\t r)},
{1.42*cos(\t r)});
\draw[color=black,thick,scale=1,domain=0: 1.5708,
smooth,variable=\t,shift={(0,0)},rotate=0]plot({1*sin(\t r)},
{1*cos(\t r)});
\draw[color=black,thick,scale=1,domain=0: 1.5708,
smooth,variable=\t,shift={(2,0)},rotate=180]plot({1*sin(\t r)},
{1*cos(\t r)});
\draw[color=black,thick,scale=1,domain=0: 1.5708,
smooth,variable=\t,shift={(1,-1)},rotate=90]plot({1*sin(\t r)},
{1*cos(\t r)});
\draw[color=black,thick,scale=1,domain=0: 1.5708,
smooth,variable=\t,shift={(1,1)},rotate=-90]plot({1*sin(\t r)},
{1*cos(\t r)});
\filldraw[color=blue!50!white, thick, shift={(1,0)}]
(-0.29,-0.045)to[out=20,in=110, looseness=1](0.045,-0.29)--
(0.045,-0.29)--(0.045,-0.29)-- (0,0)--(-0.29,0)
(-0.29,-0.045);
\filldraw[color=blue!50!white, thick, shift={(1,0)},rotate=180]
(-0.29,-0.045)to[out=20,in=110, looseness=1](0.045,-0.29)--
(0.045,-0.29)--(0.045,-0.29)-- (0,0)--(-0.29,0)
(-0.29,-0.045);
\draw[color=black, thick, shift={(1,0)}]
(-0.29,-0.045)to[out=20,in=110, looseness=1](0.045,-0.29);
\draw[color=black, thick, shift={(1,0)},rotate=180]
(-0.29,-0.045)to[out=20,in=110, looseness=1](0.045,-0.29);
\path[font=\large]
(0,1.1) node[left] {$P^4$}
(2,1.1) node[right] {$P^3$}
(0,-1.1) node[left] {$P^1$}
(2,-1.1) node[right] {$P^2$};
\end{tikzpicture}
\begin{tikzpicture}[rotate=90]
\filldraw[color=yellow,scale=1,domain=-3.141: 3.141,
smooth,variable=\t,shift={(1,0)},rotate=0]plot({1.42*sin(\t r)},
{1.42*cos(\t r)});
\filldraw[color=yellow,scale=0.2,domain=-3.141: 3.141,
smooth,variable=\t,shift={(5,0)},rotate=0]plot({1.42*sin(\t r)},
{1.42*cos(\t r)});
\filldraw[color=white,shift={(6,-2)}]
(-6,3)to[out= 40,in=180, looseness=1](-5,3.4)--
(-5,3.4)to[out= 0,in=140, looseness=1](-4,3);
\filldraw[color=white,shift={(6,-2)}]
(-6,3)to[out= 0,in=180, looseness=1](-4,3)--
(-4,3)to[out= -135,in=45, looseness=1](-5,2)--
(-5,2)to[out= 125,in=-35, looseness=1](-6,3);
\filldraw[color=yellow,thick,scale=1,domain=0: 1.5708,
smooth,variable=\t,shift={(0,0)},rotate=0]plot({1*sin(\t r)},
{1*cos(\t r)});
\filldraw[color=white, shift={(6,-2)}]
(-6,1)to[out= -40,in=180, looseness=1](-5,0.6)--
(-5,0.6)to[out= 0,in=-140, looseness=1](-4,1);
\filldraw[color=white,shift={(6,-2)}]
(-6,1)to[out= 0,in=180, looseness=1](-4,1)--
(-4,1)to[out= 150,in=-50, looseness=1](-5,2)--
(-5,2)to[out= -135,in=45, looseness=1](-6,1);
\filldraw[color=white,thick,scale=1,domain=0: 1.5708,
smooth,variable=\t,shift={(1,-1)},rotate=90]plot({1*sin(\t r)},
{1*cos(\t r)});
\filldraw[color=yellow,thick,scale=1,domain=0: 1.5708,
smooth,variable=\t,shift={(2,0)},rotate=180]plot({1*sin(\t r)},
{1*cos(\t r)});

\filldraw[color=white,thick,scale=1,domain=0: 1.5708,
smooth,variable=\t,shift={(1,1)},rotate=-90]plot({1*sin(\t r)},
{1*cos(\t r)});
\draw[color=black,scale=1,domain=-3.141: 3.141,
smooth,variable=\t,shift={(1,0)},rotate=0]plot({1.42*sin(\t r)},
{1.42*cos(\t r)});
\draw[color=black,thick,scale=1,domain=0: 1.5708,
smooth,variable=\t,shift={(0,0)},rotate=0]plot({1*sin(\t r)},
{1*cos(\t r)});
\draw[color=black,thick,scale=1,domain=0: 1.5708,
smooth,variable=\t,shift={(2,0)},rotate=180]plot({1*sin(\t r)},
{1*cos(\t r)});
\draw[color=black,thick,scale=1,domain=0: 1.5708,
smooth,variable=\t,shift={(1,-1)},rotate=90]plot({1*sin(\t r)},
{1*cos(\t r)});
\draw[color=black,thick,scale=1,domain=0: 1.5708,
smooth,variable=\t,shift={(1,1)},rotate=-90]plot({1*sin(\t r)},
{1*cos(\t r)});
\filldraw[color=yellow, thick, shift={(1,0)}]
(-0.29,-0.045)to[out=20,in=110, looseness=1](0.045,-0.29)--
(0.045,-0.29)--(0.045,-0.29)-- (0,0)--(-0.29,0)
(-0.29,-0.045);
\filldraw[color=yellow, thick, shift={(1,0)},rotate=180]
(-0.29,-0.045)to[out=20,in=110, looseness=1](0.045,-0.29)--
(0.045,-0.29)--(0.045,-0.29)-- (0,0)--(-0.29,0)
(-0.29,-0.045);
\draw[color=black, thick, shift={(1,0)}]
(-0.29,-0.045)to[out=20,in=110, looseness=1](0.045,-0.29);
\draw[color=black, thick, shift={(1,0)},rotate=180]
(-0.29,-0.045)to[out=20,in=110, looseness=1](0.045,-0.29);
\path[font=\large,rotate=-90, shift={(-1,1)}]
(0,1.1) node[left] {$P^4$}
(2,1.1) node[right] {$P^3$}
(0,-1.1) node[left] {$P^1$}
(2,-1.1) node[right] {$P^2$};
\end{tikzpicture}
\begin{tikzpicture}
\filldraw[color=blue!50!white,scale=1,domain=-3.141: 3.141,
smooth,variable=\t,shift={(1,0)},rotate=0]plot({1.42*sin(\t r)},
{1.42*cos(\t r)});
\filldraw[color=yellow,shift={(6,-2)}]
(-6,3)to[out= 40,in=180, looseness=1](-5,3.4)--
(-5,3.4)to[out= 0,in=140, looseness=1](-4,3);
\filldraw[color=yellow,shift={(6,-2)}]
(-6,3)to[out= 0,in=180, looseness=1](-4,3)--
(-4,3)to[out= -140,in=30, looseness=1](-5,2)--
(-5,2)to[out= 125,in=-35, looseness=1](-6,3);
\filldraw[color=blue!50!white,thick,scale=1,domain=0: 1.5708,
smooth,variable=\t,shift={(0,0)},rotate=0]plot({1*sin(\t r)},
{1*cos(\t r)});
\filldraw[color=red!50!white, shift={(6,-2)}]
(-6,1)to[out= -40,in=180, looseness=1](-5,0.6)--
(-5,0.6)to[out= 0,in=-140, looseness=1](-4,1);
\filldraw[color=red!50!white,shift={(6,-2)}]
(-6,1)to[out= 0,in=180, looseness=1](-4,1)--
(-4,1)to[out= 150,in=-50, looseness=1](-5,2)--
(-5,2)to[out= -135,in=45, looseness=1](-6,1);
\filldraw[color=red!50!white,thick,scale=1,domain=0: 1.5708,
smooth,variable=\t,shift={(1,-1)},rotate=90]plot({1*sin(\t r)},
{1*cos(\t r)});
\filldraw[color=white,thick,scale=1,domain=0: 1.5708,
smooth,variable=\t,shift={(2,0)},rotate=180]plot({1*sin(\t r)},
{1*cos(\t r)});
\filldraw[color=white,shift={(6,-2)}]
(-4,3)to[out= -90,in=90, looseness=1](-4,1)--
(-4,1)to[out= 135,in=-45, looseness=1](-5,2)--
(-5,2)to[out= 45,in=-135, looseness=1](-4,3);
\filldraw[color=white,shift={(6,-2)}]
(-4,3)to[out= -90,in=90, looseness=1](-4,1)--
(-4,1)to[out= 45,in=-90, looseness=1](-3.6,2)--
(-3.6,2)to[out= 90,in=-40, looseness=1](-4,3);
\filldraw[color=yellow,thick,scale=1,domain=0: 1.5708,
smooth,variable=\t,shift={(1,1)},rotate=-90]plot({1*sin(\t r)},
{1*cos(\t r)});
\draw[color=black,scale=1,domain=-3.141: 3.141,
smooth,variable=\t,shift={(1,0)},rotate=0]plot({1.42*sin(\t r)},
{1.42*cos(\t r)});
\draw[color=black,thick,scale=1,domain=0: 1.5708,
smooth,variable=\t,shift={(0,0)},rotate=0]plot({1*sin(\t r)},
{1*cos(\t r)});
\draw[color=black,thick,scale=1,domain=0: 1.5708,
smooth,variable=\t,shift={(2,0)},rotate=180]plot({1*sin(\t r)},
{1*cos(\t r)});
\draw[color=black,thick,scale=1,domain=0: 1.5708,
smooth,variable=\t,shift={(1,-1)},rotate=90]plot({1*sin(\t r)},
{1*cos(\t r)});
\draw[color=black,thick,scale=1,domain=0: 1.5708,
smooth,variable=\t,shift={(1,1)},rotate=-90]plot({1*sin(\t r)},
{1*cos(\t r)});
\filldraw[color=white,scale=0.2,domain=-3.141: 3.141,
smooth,variable=\t,shift={(5,0)},rotate=0]plot({1.42*sin(\t r)},
{1.42*cos(\t r)});
\filldraw[color=red!50!white, shift={(1,0)}]
(-0.05,-0.05)to[out= 165,in=10, looseness=1](-0.29,-0.045)--
(-0.29,-0.045)to[out= -90,in=180, looseness=1](0.045,-0.29)--
(0.045,-0.29)to[out= 100,in=-75, looseness=1](-0.05,-0.05);
\filldraw[color=yellow, shift={(1,0)},rotate=180]
(-0.05,-0.05)to[out= 165,in=10, looseness=1](-0.29,-0.045)--
(-0.29,-0.045)to[out= -90,in=180, looseness=1](0.045,-0.29)--
(0.045,-0.29)to[out= 100,in=-75, looseness=1](-0.05,-0.05);
\filldraw[color=blue!50!white, shift={(1,0)}]
(-0.05,0.29)to[out= -80,in=110, looseness=1](0.05,0.05)--
(0.05,0.05)--
(-0.05,-0.05)to[out= 165,in=10, looseness=1](-0.29,-0.045)--
(-0.29,-0.045)to[out= 95,in=180, looseness=1](-0.05,0.29);
\draw[color=black, thick, shift={(1,0)}]
(-0.05,-0.05)to[out=45,in=-135, looseness=1](0.05,0.05);
\draw[color=black, thick, shift={(1,0)}]
(-0.05,-0.05)to[out=165,in=10, looseness=1](-0.29,-0.045);
\draw[color=black, thick, shift={(1,0)}]
(-0.05,-0.05)to[out=-75,in=110, looseness=1](0.045,-0.29);
\draw[color=black, thick, shift={(1,0)},rotate=180]
(-0.05,-0.05)to[out=165,in=10, looseness=1](-0.29,-0.045);
\draw[color=black, thick, shift={(1,0)}, rotate=180]
(-0.05,-0.05)to[out=-75,in=110, looseness=1](0.045,-0.29);
\path[font=\large]
(0,1.1) node[left] {$P^4$}
(2,1.1) node[right] {$P^3$}
(0,-1.1) node[left] {$P^1$}
(2,-1.1) node[right] {$P^2$}
(1.4,0.67) node[below] {$O^1$}
(0.7,-0.13) node[below] {$O^2$};
\end{tikzpicture}
\begin{tikzpicture}[rotate=90]
\filldraw[color=red!50!white,scale=1,domain=-3.141: 3.141,
smooth,variable=\t,shift={(1,0)},rotate=0]plot({1.42*sin(\t r)},
{1.42*cos(\t r)});
\filldraw[color=yellow,shift={(6,-2)}]
(-4,3)to[out= -90,in=90, looseness=1](-4,1)--
(-4,1)to[out= 135,in=-45, looseness=1](-5,2)--
(-5,2)to[out= 45,in=-135, looseness=1](-4,3);
\filldraw[color=yellow,shift={(6,-2)}]
(-4,3)to[out= -90,in=90, looseness=1](-4,1)--
(-4,1)to[out= 45,in=-90, looseness=1](-3.6,2)--
(-3.6,2)to[out= 90,in=-40, looseness=1](-4,3);
\filldraw[color=blue!50!white,shift={(6,-2)}]
(-6,3)to[out= 40,in=180, looseness=1](-5,3.4)--
(-5,3.4)to[out= 0,in=140, looseness=1](-4,3);
\filldraw[color=blue!50!white,shift={(6,-2)}]
(-6,3)to[out= 0,in=180, looseness=1](-4,3)--
(-4,3)to[out= -140,in=30, looseness=1](-5,2)--
(-5,2)to[out= 135,in=-45, looseness=1](-6,3);
\filldraw[color=red!50!white,thick,scale=1,domain=0: 1.5708,
smooth,variable=\t,shift={(0,0)},rotate=0]plot({1*sin(\t r)},
{1*cos(\t r)});
\filldraw[color=white, shift={(6,-2)}]
(-6,1)to[out= -40,in=180, looseness=1](-5,0.6)--
(-5,0.6)to[out= 0,in=-140, looseness=1](-4,1);
\filldraw[color=white,shift={(6,-2)}]
(-6,1)to[out= 0,in=180, looseness=1](-4,1)--
(-4,1)to[out= 150,in=-50, looseness=1](-5,2)--
(-5,2)to[out= -135,in=45, looseness=1](-6,1);
\filldraw[color=white,thick,scale=1,domain=0: 1.5708,
smooth,variable=\t,shift={(1,-1)},rotate=90]plot({1*sin(\t r)},
{1*cos(\t r)});
\filldraw[color=yellow,thick,scale=1,domain=0: 1.5708,
smooth,variable=\t,shift={(2,0)},rotate=180]plot({1*sin(\t r)},
{1*cos(\t r)});
\filldraw[color=blue!50!white,thick,scale=1,domain=0: 1.5708,
smooth,variable=\t,shift={(1,1)},rotate=-90]plot({1*sin(\t r)},
{1*cos(\t r)});
\draw[color=black,scale=1,domain=-3.141: 3.141,
smooth,variable=\t,shift={(1,0)},rotate=0]plot({1.42*sin(\t r)},
{1.42*cos(\t r)});
\draw[color=black,thick,scale=1,domain=0: 1.5708,
smooth,variable=\t,shift={(0,0)},rotate=0]plot({1*sin(\t r)},
{1*cos(\t r)});
\draw[color=black,thick,scale=1,domain=0: 1.5708,
smooth,variable=\t,shift={(2,0)},rotate=180]plot({1*sin(\t r)},
{1*cos(\t r)});
\draw[color=black,thick,scale=1,domain=0: 1.5708,
smooth,variable=\t,shift={(1,-1)},rotate=90]plot({1*sin(\t r)},
{1*cos(\t r)});
\draw[color=black,thick,scale=1,domain=0: 1.5708,
smooth,variable=\t,shift={(1,1)},rotate=-90]plot({1*sin(\t r)},
{1*cos(\t r)});
\filldraw[color=white,scale=0.2,domain=-3.141: 3.141,
smooth,variable=\t,shift={(5,0)},rotate=0]plot({1.42*sin(\t r)},
{1.42*cos(\t r)});
\filldraw[color=blue!50!white, shift={(1,0)},rotate=180]
(-0.05,-0.05)to[out= 165,in=10, looseness=1](-0.29,-0.045)--
(-0.29,-0.045)to[out= -90,in=180, looseness=1](0.045,-0.29)--
(0.045,-0.29)to[out= 100,in=-75, looseness=1](-0.05,-0.05);
\filldraw[color=red!50!white, shift={(1,0)}]
(-0.05,0.29)to[out= -80,in=110, looseness=1](0.05,0.05)--
(0.05,0.05)--
(-0.05,-0.05)to[out= 165,in=10, looseness=1](-0.29,-0.045)--
(-0.29,-0.045)to[out= 95,in=180, looseness=1](-0.05,0.29);
\filldraw[color=yellow, shift={(1,0)},rotate=180]
(-0.05,0.29)to[out= -80,in=110, looseness=1](0.05,0.05)--
(0.05,0.05)--
(-0.05,-0.05)to[out= 165,in=10, looseness=1](-0.29,-0.045)--
(-0.29,-0.045)to[out= 95,in=180, looseness=1](-0.05,0.29);
\draw[color=black, thick, shift={(1,0)}]
(-0.05,-0.05)to[out=45,in=-135, looseness=1](0.05,0.05);
\draw[color=black, thick, shift={(1,0)}]
(-0.05,-0.05)to[out=165,in=10, looseness=1](-0.29,-0.045);
\draw[color=black, thick, shift={(1,0)}]
(-0.05,-0.05)to[out=-75,in=110, looseness=1](0.045,-0.29);
\draw[color=black, thick, shift={(1,0)},rotate=180]
(-0.05,-0.05)to[out=165,in=10, looseness=1](-0.29,-0.045);
\draw[color=black, thick, shift={(1,0)}, rotate=180]
(-0.05,-0.05)to[out=-75,in=110, looseness=1](0.045,-0.29);
\path[font=\large,rotate=-90, shift={(-1,1)}]
(0,1.1) node[left] {$P^4$}
(2,1.1) node[right] {$P^3$}
(0,-1.1) node[left] {$P^1$}
(2,-1.1) node[right] {$P^2$};
\path[font=\large]
(1.6,0.5) node[below] {$O^1$}
(1,-0.43) node[below] {$O^2$};
\end{tikzpicture}
\end{center}
\caption{Up: initial datum. Down: expected evolution.}
\label{nonconnected}
\end{figure}

Now let us turn to the second problem, where a network evolves in such a way that some
of its vertices remain irregular but stable.  To explain this, we return to the description 
of self-similar expanders as geodesic Steiner trees with given asymptotic boundary on the
circle at infinity.  For many (and probably all) arrangements of $n$ points on this circle,
there exist Steiner networks where some or all of the interior vertices are not trivalent,
but do satisfy the criticality condition that the sum of the unit tangents of all curves
meeting at each vertex sum to $0$. Steiner networks with vertices satisfying this weaker
condition are no longer minimizing, or even stable, critical points for the length functional;
instead, they are critical points for length.  

As an example, suppose that the arrangement of four points on the circle are invariant with respect 
to reflection across the origin (thus $q_3 = -q_1$ and $q_4 = -q_2$. In this case, the
two diameters of the circle are geodesics for $g$ and the `ray-balancing' condition
that the unit tangents sum to $0$ holds at the origin, where these lines intersect. 

We assert that it is possible to carry out the entirety of our proof of short-time existence
in this paper using such irregular self-similar expanders.  The steps are all clear: we may
construct the approximate solution in precisely the same way, and then derive estimates
for the linear equation precisely as in \S 6 to solve away the rapidly vanishing error term,
just as in \S 7.  The only point that needs to be made is that even at an evolving irregular 
vertex satisfying this balancing condition the Lopatinski-Shapiro condition still holds.






\subsection{Theorem~\ref{main} as a restarting theorem}

Given any initial network $\calN_0$, Theorem~\ref{main} guarantees the existence of a flowout $\calN(t)$, i.e., a solution of
the network flow, for some possibly short time-interval $(0,T)$  which is a regular network for all such positive times.  It is quite 
conceivable, however, that at some later time the network $\calN(t)$ degenerates and becomes irregular at some first
positive time.  There are various ways this might happen. For example, some curve in $\calN(t)$ might shrink to zero length
at time $t_0$ so that the limit of $\calN(t)$ as $r \nearrow t_0$ is an irregular network in which the vertex which is the limit
of that edge has valence four.  Another type of degeneration occurs if $\calN(t)$ has a closed region bounded by fewer than 
six curves.  One can prove that the area of the enclosed region decreases monotonically and shrinks to zero in finite time.
At that time the lengths of some or all of the boundary curves of that region have decreased  to zero, so the limiting
network will have a vertex of valence four or five.  In such limits, the curvature of the curves around this region is
unbounded above, cf.\ \cite{Man}.    We refer to the well-known numerical simulations of network flow that appear
on Ken Brakke's webpage;  these phenomena all can be seen there. 

Suppose that $\calN(t)$ does degenerate as $t \nearrow t_0$, but in such a way that none of the curvatures of
any constituent curve blow up.  This happens if two triple-junctions coalesce into a vertex of valence four,
where the incoming edges meet at angles of either $\pi/3$ or $2\pi/3$.    At least in this case, it is natural
to ask whether the network flow can be continued past this singular time.  We refer to this type of singularity as
a `standard transition' and we emphasize that if we assume that if $\calN(t)$ is a tree, and if none of the curves 
in the limit $\calN(t_0)$ has multiplicity greater than one, then this is the only possible singularity type. 

We now interpret Theorem~\ref{main} as a `restarting' theorem for a suitably defined network flow.

\begin{definition}\label{withsingularities}
Let  $\Omega$ be a smoothly bounded open subset of $\mathbb{R}^2$ and $\mathcal{N}_0$ an admissible initial network.
A time dependent family of networks $\calN(t)$, $0 \leq t < T_{\max}$ is said to be a geometric solution of network flow if it
has fixed outer vertices (this is a Dirichlet boundary condition), converges to the initial network $\calN_0$ as $t \searrow 0$,  and if
$[0,T_{\max})$ decomposes as a finite union of subintervals $[0,a_1) \cup [a_1, a_2) \cup \ldots, [a_\ell, T_{\max})$ so that
for each interval $(a_j, a_{j+1})$ there is a regular network $\calN^{(j)}(t)$ which evolves by network flow, and so that
\[
\lim_{t \nearrow a_j} \calN^{(j-1)}(t) = \lim_{t \searrow a_j} \calN^{(j)}(t) = \calN_{j}
\]
In each of these limits, $\calN^{(j-1)}(t)$ has some arcs whose length decreases to $0$, while $\calN^{(j)}$ has a collection
of new arcs emanating from all vertices in  $\calN_j$ which have valence greater than $3$. 
\end{definition}

\begin{proposition}
Suppose $\mathcal{N}(t)$ is a solution to the network flow in $[0,T)$ and that as $t\to T$ the lengths of one or more curves in
$\calN(t)$ converge to $0$, but that the curvatures of all curves in $\calN(t)$ remain bounded in this limit. 
Then the solution can be extended too a longer time-interval $[0,T')$ for some $T' > T$. 
\end{proposition}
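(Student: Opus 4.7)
The plan is to interpret this as a concatenation of two flows: the given solution on $[0,T)$, followed by a fresh flow started from the limit network $\calN_T := \lim_{t\nearrow T} \calN(t)$, produced by Proposition \ref{C2datum}. Thus the task reduces to two substeps: (i)~extract a sufficiently regular limit network $\calN_T$, and (ii)~verify that $\calN_T$ is an admissible initial network in the sense of Definition~\ref{network}.

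For step (i), observe that away from any vertex, each evolving arc of $\calN(t)$ satisfies a uniformly parabolic scalar equation. Under the hypothesis that $\sup_{\calN(t)} |\kappa|$ remains bounded as $t\nearrow T$, standard interior parabolic Schauder estimates (applied to short arcs parametrized by arclength) give uniform $\calC^{k,\alpha}$ bounds on each arc on compact subsets not meeting any vertex. Near an interior vertex whose three incident arcs persist with nondegenerate length, the Herring boundary conditions and bounded curvature imply uniform $\calC^{2,\alpha}$ control up to the vertex, via the linear estimates of Section~\ref{subsection:linearized operator} applied to the linearization around $\calN(t)$. Consequently, for each arc that does not collapse, the limit $\gamma^{(j)}_T := \lim_{t\nearrow T}\gamma^{(j)}(t,\cdot)$ exists in $\calC^2$. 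Arcs whose length tends to zero collapse to points; the positions of their endpoints converge to a common limit, producing a new vertex $p\in\calN_T$ whose valence is at least $4$.

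For step (ii), the structural assertion we need is that at each such new vertex $p$ the tangent directions of the arcs arriving at $p$ are well-defined and pairwise distinct (nontangential meeting). This is where the bounded-curvature assumption enters decisively: the unit tangent $\tau^{(j)}(t,\cdot)$ along any noncollapsing arc is controlled in $\calC^{1,\alpha}$ uniformly in $t$, so $\tau^{(j)}(T,\cdot)$ has a definite value at $p$. For two arcs that arrive at $p$ because a connecting short arc has collapsed, the Herring condition at each end of the shrinking arc, together with the uniform curvature bound (which prevents the shrinking arc from making a spiral as it collapses), forces the two limiting tangents to be distinct. A short angle-counting argument---essentially the same combinatorics as in Section~2.3 that produced the count $m = n + \sum_i(k_i-3)$---rules out tangential meetings in the ``standard transition'' case. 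Thus $\calN_T$ is an admissible $\calC^2$ initial network.

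Given $\calN_T$, Proposition~\ref{C2datum} provides $\delta>0$ and a regular flowout $\widetilde{\calN}(s)$, $s\in[0,\delta)$, starting from $\calN_T$; one must choose at each newly formed interior vertex $p$ of $\calN_T$ a self-similar expander $S_p$ compatible with its fan, and in particular, for a vertex of valence $4$ one can choose the (unique, by Proposition~\ref{Steiner} applied to a tetravalent fan of the standard transition type) regular expander that resolves it into two triple junctions. Setting $T' = T+\delta$, $a_\ell = T$, $\calN^{(\ell)}(t) := \widetilde{\calN}(t-T)$ for $t\in[T,T')$, and keeping the given $\calN(t)$ on $[0,T)$, we obtain a solution in the sense of Definition~\ref{withsingularities} on $[0,T')$, as required. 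The main obstacle, and the only nontrivial input, is step (ii): producing a definite, nontangential fan of tangent directions at each newly formed vertex, for which the hypothesis of bounded curvature is exactly what is needed.
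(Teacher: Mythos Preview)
Your approach is the same as the paper's: extract the limit $\calN_T$, verify it is admissible, restart via Proposition~\ref{C2datum}, and concatenate in the sense of Definition~\ref{withsingularities}. The paper's own proof is three sentences and leaves the details of your steps (i)--(ii) to the surrounding discussion and to \cite{Man}. Two of your specific citations misfire, however: Section~\ref{subsection:linearized operator} contains weighted $L^2$ estimates on the blown-up domain, not $\calC^{2,\alpha}$ Schauder control up to a persisting regular vertex (for that you want standard parabolic regularity for the Herring boundary problem, as in \cite{BrRe, Man}); and the combinatorics in \S 2.3 count edges and vertices, not angles, so they do not furnish nontangentiality at the collapsed vertex. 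The nontangentiality in fact follows from the Herring condition surviving in the limit under bounded curvature, which is exactly what the paper asserts in the paragraph preceding the proposition (incoming angles of $\pi/3$ or $2\pi/3$).
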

\begin{proof}
By the hypotheses, the limit as  $t\to T$ is a network with bounded curvature and irregular vertices.  It can
thus be used as an initial netwoork for a subsequent flow.  The juxtaposition of this `later' flow with
the original one produces a solution to this flow, in the sense described in Definition~\ref{withsingularities}
on the union of the two time intervals. 
\end{proof}

It is worth stating explicitly that one may control the topological complexity of the network as it passes
through each singular time in terms of the choices of expanders at each irregular vertex.   Considering only
connected expanders without loops, then the total number of enclosed regions is nonincreasing for the
duration of this flow. 

Notice that when a `standard transition' occurs, differently from~\cite{INS}, our Theorem~\ref{main} produces a unique network flow.

\subsection{Other approaches}
In this final section we discuss several approaches that have been used to analyze mean curvature flow in settings 
which allow for nonsmooth initial data.  We do not include here method based on the maximum 
principle, e.g.\ viscosity solutions. 

\medskip

Let $\Sigma_t$ denote a time-dependent smooth submanifold in a Riemannian manifold which evolves with normal 
velocity equal to its mean curvature at any point and time.   This can be formulated as a parabolic PDE for as  long
as $\Sigma_t$ remains smooth and embedded.  Smoothness, in particular, is guaranteed by parabolic regularity
theory so long as certain curvatures do not blow up, or if certain components (or regions) collapse.  However,
at some time $T$ the flow ceases to exist in a classical sense.   It is then natural to ask whether there is any
natural way (or perhaps multiple ways) to continue the flow past this singular time.   

In a different direction, it has long been of interest to define the evolution by mean curvature of more
complicated objects, for example clusters of surfaces, as necessitated by the evolution of grain boundaries.

One of the first successful approaches to this was contained in the thesis of Brakke~\cite{Br}. In it, he
introduced a weak formulation, phrased in the geometric measure theoretic notion of varifolds. This is
a broad enough class to contain highly irregular objects, and mean curvature is defined in a distributional sense.
More precisely, a family of $n$-dimensional varifolds $\{V_t\}_{t\in [0,T)}$ is a solution of the Brakke flow if,  for 
almost every time, $V_t$ is an integral $n$--varifold with locally bounded first variation and generalized curvature
$h$ in $L^2$ such that for any compact set $K$ in $\mathbb{R}^n$ and $t\in (0,T)$ we have $\sup_{\tau\in[0,t) }\Vert V_{\tau}\Vert(K)<\infty$. 
The flow is dictated by the requirement that for every function $\phi \in \calC^0(\RR^{N} \times [0,T);\mathbb{R}^+)$, 
we have
\begin{equation}
\frac{d\,}{dt} \int \phi(x, t)\, \mathrm{d}\Vert V_t\Vert \leq  \int \left( -\phi(x, t) h(V_t,x)^2 + 
\langle \nabla \phi, h(V_t,x)\rangle + (\del_t \phi)(x, t)\right)\, \mathrm{d}\Vert V_t\Vert \,.
\label{Brakke}
\end{equation}

Brakke was able to prove global existence of solutions for any initial data which is an integral varifold.  This flow has
many advantages, but also some deficiencies. For example, this definition allows the instantaneous loss of mass,
so even the empty set is a solution.  Solutions are not unique (see also~\cite{StTo}), and in addition, it is difficult 
to control the multiplicity of the evolving varifolds.   

These difficulties motivate the continuing search for formulations which allow for singular configurations but 
which provide better geometric control of the evolution. 

An important bit of progress was made almost forty years after Brakke's original work by Kim and Tonegawa \cite{KT},
at least for codimension one varifolds. They enhanced the definition of Brakke flow by coupling the time dependent family of 
varifolds with a finite number of time-dependent mutually disjoint open sets. Roughly speaking the varifold is 
the boundary of the union of these sets, and represents the evolving surfaces of a cluster (the open sets are the grains
and the varifold is the grain boundary). The sets evolve continously in time with respect to the Lebesgue measure
and the varifold satisfies~\eqref{Brakke}.  Kim and Tonegawa proved the existence and could also exclude trivial solutions.
Furthermore, when the initial datum is a closed $1$--rectifiable set in $\mathbb{R}^2$ with (locally) finite measure (thus, 
a generalization of the networks we consider here), they obtain regularity~\cite{KT2}: for almost every time
the support of the evolving varifolds consists locally of embedded $W^{2,2}$ curves whose endpoints meet at junctions 
with either $0^\circ$, $60^\circ$ or $120^\circ$ at almost every time. It is expected that these angles are (almost always) $120^\circ$ 
and thus these solutions resemble the network flow. 

On the other hand, as explained in~\cite{Man}, solutions to the network flow are solutions of the Brakke flow,
and there is \emph{equality} in \eqref{Brakke} when the network is \emph{regular}. 

An important point is that none of these formulations describe what happens on sets of time of measure
zero, so cannot give precise information on the types of instantaneous jumps we study in this paper. 

\medskip

We next recall the theory of minimizing movements; this is a time discrete approximation to gradient flow,
which was considered in the setting of geometric flows by a number of authors, including De Giorgi, 
Almgren, Taylor and Wang, Luckhaus and Sturzenhecker.   This method involves solving a sequence of 
variational problems at every one of the discrete time steps.  To pass from one time-step to the next, one
minimizes the sum of the area functional and an additional term which is added to make sure that we do 
not move too far in a single time step. 
There is a recent result by Fischer, Hensel, Laux and Simone~\cite{FiHeLaSi} concerning BV solutions
to planar multiphase mean curvature flow. They prove that in the absence of topological changes, 
solutions are unique. 

\medskip

As for methods which rely specifically on PDE, the theory started with the paper of Bronsard and Reitich \cite{BrRe},
who proved local existence and uniqueness provided the initial network is regular and of class $\calC^{2+\alpha}$,
with the sum of the curvatures at each triple junctions equal to zero.  This last condition is tantamount to the 
first compatibility condition. Their proof is based on a standard linearization procedure and a fixed point argument.
If the initial datum is a regular network of class $\calC^2$, but without any restriction on the curvatures at the junctions,
then \cite{Man} proves existence but not uniqueness.  Uniqueness was proved in \cite{GMP} for initial
networks in $W^{2-\nicefrac{2}{p}}_p$ for any $p\in (3,\infty)$. A fortiori, this gives uniqueness for regular networks 
of class $\calC^2$. 
Noting that $W^{2-\nicefrac{2}{p}}_p \hookrightarrow \calC^{1+\alpha}$, we see that the space 
$W_p^1\left((0,T);L_p((0,1);\mathbb{R}^2)\right)\cap L_p\left((0,T); W_p^2\left((0,1);\mathbb{R}^2\right)\right)$
where solutions lie embeds into the space of bounded, uniformly continuous functions from $[0,T]$ to $C^{1+\alpha}$.
This degree of regularity is needed in order to get the contraction estimates in the fixed point argument
and to be able to invoke classical parabolic theory for the associated linear system.   

None of these approaches allows one to treat initial networks which contain triple junctions where the curves
do not meet at $120^\circ$, or which contain vertices with valence greater than three.  Clearly, in either of
these cases one needs to work in a slightly broader class which does not impose continuity as $t \searrow 0$.

\medskip

As already mention in the introduction, there is already a proof of short-time existence due to lmanen, Neves and Schulze~\cite{INS} 
if the initial network is an irregular tree with bounded curvature.  We describe this in a bit more detail.
As we do here, the main issue is to desingularize the higher valence vertices using expanding solitons.  They glue
a truncation of a connected regular expanding solition in place of a higher valence vertex, in a ball of radius comparable
to $\sqrt{t}$ to obtain an approximation of the flow.  They then argue using an expander monotonicity formula which 
implies that self-similar expanding flows are ``dynamically stable''.  The existence of the approximating flow for a positive 
time  is guaranteed by this monotonicity, which shows that this
approximating flows stays close to the self-similar expanding flows in a suitable neighborhood. The authors establish
that the flow converges to the initial datum only in the sense of varifolds.  This makes it difficult to address questions
of uniqueness or multiplicity, as is possible in our approach.

This brief tour of various methods hopefully makes clear that the methods espoused in this paper do provide some useful
additional information regarding multiplicity of solutions that may not be so easy to obtain by other ways.

\bibliographystyle{abbrv}
\bibliography{stenf}

\end{document}